\newtheorem{theorem}{Theorem}[section]
\newtheorem{lemma}[theorem]{Lemma}
\newtheorem{corollary}[theorem]{Corollary}
\newtheorem{proposition}[theorem]{Proposition}
\theoremstyle{definition}
\newtheorem{definition}[theorem]{Definition}
\newtheorem{question}[theorem]{Question}
\newtheorem{fact}[theorem]{Fact}
\newtheorem*{claim}{Claim}
\newtheorem*{convention}{Convention}
\begin{document}

\title{On Rosser theories}

\begin{abstract}
Rosser theories play an important role in the study of the incompleteness phenomenon and meta-mathematics of arithmetic. In this paper, we first define the notions of  $n$-Rosser theories, exact $n$-Rosser theories, effectively $n$-Rosser theories and effectively exact $n$-Rosser theories  (see Definition \ref{def of Rosser}). Our definitions are not restricted to arithmetic languages. Then we systematically examine properties of $n$-Rosser theories and relationships among them. Especially, we generalize some important theorems about Rosser theories for recursively enumerable sets in the literature to $n$-Rosser theories  in a general setting.

\end{abstract}

\subjclass[2010]{03F40, 03F30, 03F25}

\keywords{$n$-Rosser theories, Exact $n$-Rosser theories, Effectively $n$-Rosser theories, Effectively exact $n$-Rosser theories, Rosser theories}

\author{Yong Cheng\\
School of Philosophy, Wuhan University, China}
\address{School of Philosophy, Wuhan University, Wuhan 430072 Hubei, Peoples Republic of China}

\email{world-cyr@hotmail.com}

\date{}

\thanks{I would like to thank Albert  Visser for helpful comments on the work. I would like to thank all referees for helpful suggestions and comments for improvements.}

\maketitle

\section{Introduction}

The notion of Rosser theories is introduced in \cite{Smullyan59} and \cite{Smullyan93}. Rosser theories play an important role in the study of the incompleteness phenomenon and meta-mathematics of arithmetic, and have important meta-mathematical properties (see \cite{Smullyan93}). All definitions of Rosser theories we know in  the literature are restricted to arithmetic languages which admit numerals for natural numbers. Even if Smullyan introduced the notion of Rosser theories  for $n$-ary relations in \cite{Smullyan93}, results about Rosser theories  in \cite{Smullyan93} are confined to 1-ary and two-ary relations. A general theory of Rosser theories  for $n$-ary relations for any $n\geq 1$ and relationships among them is missing in the literature.

In this paper, we first introduce the notion of $n$-Rosser theories in a general setting which generalizes the notion of Rosser theories for recursively enumerable (RE) sets. Then, we introduce the notions of exact $n$-Rosser theories, effectively $n$-Rosser theories and effectively exact $n$-Rosser theories (see Definition \ref{def of Rosser}). The notion of effectively $n$-Rosser theories (effectively exact $n$-Rosser theories) is an effective version of the notion of $n$-Rosser theories (exact $n$-Rosser theories). We define that $T$ is Rosser if $T$ is $n$-Rosser for any $n\geq 1$. Our definitions of these notions are not restricted to arithmetic languages admitting numerals for natural numbers. Then we systematically examine properties of $n$-Rosser theories and relationships among them. Especially, we generalize some important theorems about Rosser theories for RE sets in \cite{Smullyan93} to $n$-Rosser theories. For these generalizations, we need some  tools such as Theorem \ref{semi-DU is DU} and a generalized version of the Strong Double Recursion Theorem as in Theorem \ref{SDRT}.

In this paper, the generalizations of the notions about Rosser theories for RE sets in \cite{Smullyan93} consist of two aspects: (1) going from pairs of RE 
sets  to pairs of $n$-ary RE relations for any 
$n \geq 2$, and (2) going from theories in
the usual arithmetic language to theories $T$ in which we can interpret a very basic theory of numerals which allows
us to introduce numerals in $T$.

Our motivation of these generalizations is two-folds. Firstly, just like many theorems in recursion theory have been generalized from RE sets to $n$-ary RE relations, and from versions without parameters to versions with parameters, it is natural for us to consider the generalizations of results about  RE sets to results about $n$-ary RE relations for any $n\geq 1$. 
Secondly, definitions of Rosser theories in the literature are  restricted to arithmetic languages which admit numerals for natural numbers. Under this restriction, we do not even know whether $\mathbf{ZFC}$ is Rosser since the language of $\mathbf{ZFC}$ does not  admit numerals. Thus, it is natural for us to extend the notions of Rosser theories in an arithmetic language to the notions of Rosser theories in a non-arithmetic language which can interpret a very basic theory of numerals. 
In fact, one research methodology in \cite{Smullyan93} is studying the meta-mathematical properties of formal theories related to incompleteness (or undecidability) by proposing stronger or more general meta-mathematical properties: from essential undecidability, recursive undecidability, effective inseparability to the Rosser property (for the definitions of the relevant notions, we refer to \cite{Smullyan93}). 

The structure of this paper is as follows. In Section \ref{old def}, we present the definition of Rosser theories and list the main theorems about Rosser theories in the literature. In Section \ref{new def}, we give the new definitions of $n$-Rosser theories, exact $n$-Rosser theories, effectively $n$-Rosser theories, effectively exact $n$-Rosser theories and Rosser theories. In Section \ref{prelim}, we list definitions and facts we will use. In Section \ref{basic facts}, we prove some basic facts about Rosser theories under our new definitions.  
In Section \ref{SRDT}, we prove a generalized version of the Strong Double Recursion Theorem which is a main tool in Sections \ref{generalization of theorem on rosser} and \ref{exact Rosser}.
In Section \ref{generalization of theorem on rosser}, we prove some theorems about relationships among $n$-Rosser theories, exact $n$-Rosser theories, effectively $n$-Rosser theories and effectively exact $n$-Rosser theories. Especially, we generalize  Theorem \ref{effect Rosser}  to $n$-Rosser theories.  
In Section \ref{exact Rosser}, we generalize Putnam-Smullyan Theorem \ref{PST} to  $n$-Rosser theories and prove Theorem \ref{improve PST}. As a main tool of the proof of Theorem \ref{improve PST}, we first prove in Section \ref{semi-DU} that semi-$\sf DU$ implies $\sf DU$ for a disjoint pair of $n$-ary relations.  In Section \ref{apply of DU}, we examine applications of the result that semi-$\sf DU$ implies $\sf DU$ in meta-mathematics of arithmetic. We first prove Theorem \ref{improve PST} and then Theorem \ref{key improve thm} which essentially improves Theorem \ref{improve PST}. In Section \ref{equi under def}, we examine relationships among $n$-Rosser theories under the assumption that the pairing function  is strongly definable in the base theory.

\subsection{Definitions of Rosser theories in the literature}\label{old def}

In this paper, we  fix a way of G\"{o}del coding as developed in standard textbooks such as  \cite{Enderton 2001, Smullyan93}. Under this coding, any formula or expression has a unique code. For any formula $\phi$, we use $\ulcorner \phi\urcorner$ to denote the code of $\phi$.

In this section,  we assume that $T$ is a consistent RE theory in a language of arithmetic which admits numerals $\overline{n}$ for any $n\in\omega$.
In \cite{Smullyan93}, Smullyan introduced the notions of Rosser theories for RE sets and for $n$-ary relations ($n\geq 2$).

\begin{definition}[\cite{Smullyan93}]~\label{standard def of Rosser}
\begin{enumerate}[(1)]
\item We say $T$ is a \emph{Rosser theory  for RE sets}  if for any disjoint pair $(A,B)$ of RE sets, there exists a formula $\phi(x)$ with exactly one free variable such that if $n\in A$, then $T\vdash \phi(\overline{n})$, and if $n\in B$, then $T\vdash \neg\phi(\overline{n})$.
  \item We say $T$ is a \emph{Rosser theory  for $n$-ary relations} ($n\geq 2$) if for any disjoint pair $(M_1^n, M_2^n)$ of $n$-ary RE relations, there exists a formula $\phi(x_1, \cdots, x_n)$ with exactly $n$-free variables such that for any $\overrightarrow{a}=(a_1, \cdots, a_n)\in \mathbb{N}^n$, if $\overrightarrow{a} \in M_1^n$, then $T\vdash \phi(\overline{a_1}, \cdots, \overline{a_n})$, and if $\overrightarrow{a} \in M_2^n$, then $T\vdash \neg\phi(\overline{a_1}, \cdots, \overline{a_n})$.
      \item We say $T$ is an  \emph{exact Rosser theory  for RE sets}  if for any disjoint pair $(A,B)$ of RE sets, there exists a formula $\phi(x)$ with exactly one free variable such that $n\in A\Leftrightarrow T\vdash \phi(\overline{n})$, and  $n\in B\Leftrightarrow T\vdash \neg\phi(\overline{n})$. Similarly, we can define the notion of \emph{exact Rosser theory  for $n$-ary relations}  ($n\geq 2$). 
      \item  We denote the RE set with index $i$ by $W_i$. We say $T$ is \emph{effectively Rosser for RE sets} if there exists a recursive function $f(i,j)$ such that for any $i,j\in\omega$, $f(i,j)$ is the G\"{o}del number of a formula $\phi(x)$ such that for any $n\in\omega$, if $n\in W_i-W_j$, then $T\vdash \phi(\overline{n})$ and if $n\in W_j-W_i$, then $T\vdash \neg\phi(\overline{n})$.  
          \item We say $T$ is \emph{effectively exact Rosser for RE sets} if there exists a recursive function $f(i,j)$ such that for any disjoint pair of RE sets $(W_i, W_j)$, $f(i,j)$ is the G\"{o}del number of a formula $\phi(x)$ such that for any $n\in\omega$, $n\in W_i\Leftrightarrow T\vdash \phi(\overline{n})$ and $n\in W_j\Leftrightarrow T\vdash \neg\phi(\overline{n})$. 
   \item    We say $T$ is \emph{Rosser} if $T$ is a Rosser theory for RE sets and a Rosser theory for $n$-ary relations for any $n\geq 2$.
\end{enumerate}
\end{definition}

The first known definitions of Rosser theories appear in \cite{Smullyan59}. In \cite{Smullyan59}, Rosser theories are defined as Rosser theories  for RE sets.

Results in \cite{Smullyan93} are mostly about Rosser theories for RE sets, and very few theorems in \cite{Smullyan93} are about Rosser theories for 2-ary relations. Properties of Rosser theories for $n$-ary relations and relationships among them  are not discussed in \cite{Smullyan93}.
Based on Definition \ref{standard def of Rosser}, Theorem \ref{effect Rosser} and Theorem \ref{PST} are proved in \cite{Smullyan93}:
\begin{theorem}[\cite{Smullyan93}]~\label{effect Rosser}
\begin{enumerate}[(1)]
  \item If $T$ is Rosser for binary RE relations, then $T$ is effectively Rosser for RE sets;
  \item If $T$ is exact Rosser for binary RE relations, then $T$ is effectively exact Rosser for RE sets.
      \item If $T$ is Rosser for binary RE relations, then $T$ is exact Rosser for RE sets.
      \item  A theory $T$ is effectively Rosser for RE sets if and only if $T$ is effectively exact Rosser for RE sets.
\end{enumerate}
\end{theorem}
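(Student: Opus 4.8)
The plan is to prove Theorem \ref{effect Rosser} one clause at a time, with the key observation that all four clauses are really about converting Rosser-type properties for binary relations into (effective/exact) Rosser-type properties for sets, by exploiting a uniform, effective parametrization of RE sets via the indices $W_i$. The central device throughout is that the enumeration $\langle W_i\rangle_{i\in\omega}$ is uniformly RE, so that whenever we construct from a disjoint pair of binary relations a witnessing formula, and the input binary relations are themselves built effectively from a pair of indices $(i,j)$, the resulting formula can be read off by a recursive function of $(i,j)$. I would first isolate this as the working principle and then apply it in each part.

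For clause (1), given indices $i,j$, I would form the binary relation $M_1 = \{(n,k) : n \in W_i\}$ (or, more carefully, $M_1 = (W_i - W_j) \times \omega$ together with a diagonal trick) and a complementary binary relation $M_2$, arranged so that the pair is disjoint whenever $n$ lies in exactly one of $W_i, W_j$. Since $T$ is Rosser for binary RE relations, there is a witnessing formula $\psi(x,y)$ for this disjoint pair; substituting a fixed numeral (say $\overline{0}$) for $y$ yields $\phi(x) := \psi(x,\overline{0})$, which separates $W_i - W_j$ from $W_j - W_i$ in $T$. The point of passing through binary relations rather than using the set-Rosser property directly is that the binary relations $M_1, M_2$ are produced uniformly from $(i,j)$ by the s-m-n theorem, and the Rosser witness for binary relations is extracted uniformly; composing these gives the required recursive $f(i,j)$ computing $\ulcorner \psi(x,\overline{0})\urcorner$. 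Clause (2) follows by the same construction but starting from the \emph{exact} Rosser property for binary relations, which upgrades the one-directional implications to biconditionals, and restricting attention to disjoint pairs $(W_i, W_j)$ so that the exact separation of the binary pair descends to an exact separation of the sets.

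For clause (3), I would take an arbitrary disjoint pair $(A,B)$ of RE sets, lift it to a disjoint pair of binary relations (e.g. $A \times \omega$ and $B \times \omega$, or a version using a padding coordinate), apply the binary Rosser property to obtain $\psi(x,y)$, and set $\phi(x) = \psi(x, \overline{0})$; the disjointness of $A, B$ guarantees the exactness of the separation, giving the exact Rosser property for sets. Clause (4) is the one self-contained equivalence: the forward direction is immediate since effectively exact Rosser is formally stronger (it requires the biconditional $n \in W_i \Leftrightarrow T \vdash \phi(\overline{n})$ on disjoint pairs, which in particular yields the one-sided implications), so I would only need to prove that effectively Rosser implies effectively exact Rosser. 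Here I expect the main obstacle: starting from a recursive $f$ that produces separating formulas on the \emph{difference} sets $W_i - W_j$ and $W_j - W_i$, I must manufacture a recursive $g$ that produces \emph{exactly} separating formulas on \emph{disjoint} pairs. The standard trick is to replace the input indices by indices of the difference sets—using s-m-n to find recursive $p,q$ with $W_{p(i,j)} = W_i - W_j$ and $W_{q(i,j)} = W_j - W_i$, which equal $W_i, W_j$ precisely when the pair is disjoint—and then set $g(i,j) = f(p(i,j), q(i,j))$; verifying that on disjoint pairs this yields the exact biconditionals, and checking that $f$'s one-sided guarantees on differences become exact equalities once the sets are already disjoint, is the delicate bookkeeping step. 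I would carry out this index manipulation carefully, as it is where the effectiveness and the exactness must be reconciled simultaneously.
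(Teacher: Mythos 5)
There are genuine gaps in three of the four clauses, and they all stem from the same misconception. For clauses (1) and (2) you apply the binary Rosser property to a pair of binary relations that \emph{depends on} $(i,j)$ and then assert that ``the Rosser witness for binary relations is extracted uniformly.'' It is not: Rosserness for binary relations is a bare existence statement with no effectivity, so applying it once per pair $(i,j)$ gives no recursive function $f(i,j)$ computing the code of the witness. The whole point of the argument (and the route the paper takes in its generalization, Theorem \ref{rosser imply effec rosser}) is to apply the binary Rosser property \emph{exactly once}, to the single universal pair $M_1(n,y)\Leftrightarrow n\in W_{Ky}$ and $M_2(n,y)\Leftrightarrow n\in W_{Ly}$, obtaining one fixed formula $\phi(x,y)$; the recursive function is then $f(i,j)=\ulcorner\phi(x,\overline{J_2(i,j)})\urcorner$, which is recursive because numeral substitution into a fixed formula is recursive. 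Your substitution of $\overline{0}$ for the second variable discards precisely the coordinate that must carry the index information.

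Clauses (3) and (4) have a second, independent gap: in both you only ever obtain the one-sided implications ($n\in A\Rightarrow T\vdash\phi(\overline n)$, $n\in B\Rightarrow T\vdash\neg\phi(\overline n)$), and neither ``disjointness of $A,B$'' nor replacing $(i,j)$ by indices of the difference sets supplies the converses $T\vdash\phi(\overline n)\Rightarrow n\in A$ (indeed, when $W_i\cap W_j=\emptyset$ your $p,q$ satisfy $W_{p(i,j)}=W_i$ and $W_{q(i,j)}=W_j$, so $g=f\circ(p,q)$ changes nothing). Exactness requires a Rosser-style self-reference: one separates the RE relations ``$n\in W_i$ or $T\vdash\neg E_b(\overline n)$'' from ``$n\in W_j$ or $T\vdash E_b(\overline n)$'' where $b$ is the code of the very formula being produced, closes the loop with the (double) recursion theorem, and then uses the consistency of $T$ to force the biconditionals. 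This is what the paper's Lemma \ref{RT1} and Theorem \ref{effective exact} do for clause (4), and clause (3) is then obtained by composing (1) with (4) rather than directly.
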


\begin{theorem}[Putnam-Smullyan Theorem,\cite{Smullyan93}]~\label{PST}
Suppose $T$ is Rosser for RE sets and any $1$-ary recursive function is strongly definable in $T$.\footnote{We say a function $f(x)$ is strongly definable in $T$ if there exists a formula $\varphi(x,y)$ such that for any $n\in\omega, T\vdash \forall y [\varphi(\overline{n},y)\leftrightarrow y=\overline{f(n)}]$.} Then $T$ is exact Rosser for RE sets.
\end{theorem}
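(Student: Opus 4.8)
The plan is to exactly separate a given disjoint pair of RE sets by a self-referential formula built from a Rosser-style comparison of proof searches, using strong definability of recursive functions to supply both the diagonal lemma and the numeralwise representability of recursive relations.

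Fix a disjoint pair $(A,B)$ of RE sets; the goal is a formula $\phi(x)$ with $n\in A\Leftrightarrow T\vdash\phi(\bar n)$ and $n\in B\Leftrightarrow T\vdash\neg\phi(\bar n)$. For any $\phi$ the sets $\{n:T\vdash\phi(\bar n)\}$ and $\{n:T\vdash\neg\phi(\bar n)\}$ are RE and, by consistency of $T$, disjoint, and the hypothesis that $T$ is Rosser for RE sets already delivers the forward inclusions $A\subseteq\{n:T\vdash\phi(\bar n)\}$ and $B\subseteq\{n:T\vdash\neg\phi(\bar n)\}$. Hence the content to be added is the two converse implications $T\vdash\phi(\bar n)\Rightarrow n\in A$ and $T\vdash\neg\phi(\bar n)\Rightarrow n\in B$, which together amount to exactness. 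As a preliminary I would record that strong definability of every $1$-ary recursive function yields numeralwise representability of recursive relations in $T$ (for recursive $R$ a formula $\rho$ with $R(n)\Rightarrow T\vdash\rho(\bar n)$ and $\neg R(n)\Rightarrow T\vdash\neg\rho(\bar n)$) and, via representability of the substitution function, the diagonal lemma; both are needed below.

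For the construction I would present $A$ and $B$ as $A=\{n:\exists s\,R_A(n,s)\}$ and $B=\{n:\exists s\,R_B(n,s)\}$ with $R_A,R_B$ recursive and persistent in the stage $s$, and write $\pi(n,s)$ and $\nu(n,s)$ for the recursive predicates asserting that $s$ codes a $T$-proof of $\phi(\bar n)$, respectively of $\neg\phi(\bar n)$. By the diagonal lemma I would obtain $\phi(x)$ provably equivalent in $T$ to a Boolean combination of two Rosser-type clauses: one asserting that $n$ enters $A$ at a stage $s$ with no proof of $\neg\phi(\bar n)$ of code $\leq s$, the other the symmetric statement with the roles of $A,B$ and of $\phi,\neg\phi$ exchanged, combined so that the first clause drives provability of $\phi(\bar n)$ and the second drives provability of $\neg\phi(\bar n)$. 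All ingredients being recursive, the fixed point is legitimate. The forward inclusions are then verified as in Rosser's theorem: if $n\in A$ then $n\notin B$ and $T\nvdash\neg\phi(\bar n)$, so a large enough $s$ witnesses the first clause, whose matrix is recursive and hence representable, giving $T\vdash\phi(\bar n)$; the case $n\in B$ is symmetric.

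Exactness I would prove contrapositively. Suppose $T\vdash\phi(\bar n)$, with least proof-code $s_0$, but $n\notin A$; then inside $T$ the race up to $s_0$ can be adjudicated by representability, and the failure of the $A$-clause together with the available proof of $\phi(\bar n)$ forces the second clause to win, yielding $T\vdash\neg\phi(\bar n)$ and contradicting consistency, so $n\in A$; the symmetric argument gives $T\vdash\neg\phi(\bar n)\Rightarrow n\in B$. I expect the main obstacle to be exactly this symmetric pinning-down --- Putnam's refinement of the Rosser trick --- namely engineering the two clauses so that both converse implications hold at once, letting $T$ detect and punish a spurious proof on either side without the clauses interfering, and carrying out the entire argument using only numeralwise representability of recursive relations rather than any form of $\Sigma_1$-completeness or soundness. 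The bounded quantifiers over proof-codes are precisely what turn the otherwise $\Pi_1$ obstruction into a condition that $T$ can decide through representability.
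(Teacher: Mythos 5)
Your route is genuinely different from the one the paper relies on, and it has a gap that I do not think can be repaired from the stated hypotheses. The intended argument (Smullyan's, generalized in the paper as Theorem \ref{improve PST} and Theorem \ref{key improve thm}) is extensional: one applies the Rosser hypothesis to a doubly universal pair $(U_1,U_2)$ to get a separating formula $\phi$, observes that $(C,D)=(\{n: T\vdash\phi(\overline{n})\},\{n: T\vdash\neg\phi(\overline{n})\})$ is a disjoint RE pair that is semi-$\sf DU$ and is exactly separated by $\phi$ by its very definition, invokes the purely recursion-theoretic fact that semi-$\sf DU$ implies $\sf DU$ (Theorem \ref{semi-DU is DU}), and then transports exactness along an exact reduction $f$ of an arbitrary disjoint RE pair $(A,B)$ to $(C,D)$, using strong definability of $f$ to form $\theta(x)=\exists y(\psi(x,y)\wedge\phi(y))$. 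Exactness of $\theta$ comes for free from exactness of the reduction; no reasoning inside $T$ about proof predicates is needed. Your plan instead rebuilds a Rosser-style witness-comparison fixed point from scratch.

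The gap: verifying the forward inclusions for your fixed point $\phi$ requires $T$ to carry out the witness-comparison argument internally, and that needs more than numeralwise representability of recursive relations --- it needs the order axioms of $\mathbf{R}$ (Ax4: every element $\le\overline{n}$ is one of $\overline{0},\dots,\overline{n}$; Ax5: comparability with each numeral). Concretely, when $n\in A$ and a possibly spurious proof of $\neg\phi(\overline{n})$ must be beaten, you need $T\vdash\forall t(t\le\overline{s_0}\rightarrow\dots)$ and $T\vdash s\le\overline{t_0}\vee\overline{t_0}\le s$; representability only decides each standard instance and says nothing about nonstandard witnesses, which is exactly the $\Pi_1$ obstruction you mention. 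Collapsing the bounded matrix into a single represented recursive predicate does not help, because you would then need $T$ to refute an unbounded existential. None of these order axioms follow from ``$T$ is Rosser for RE sets'' plus strong definability of $1$-ary recursive functions; the whole point of the theorem is that it applies to theories about whose internal arithmetic nothing further is assumed. Relatedly, your remark that the Rosser hypothesis ``already delivers the forward inclusions'' for $\phi$ is a non sequitur: being Rosser guarantees that \emph{some} formula strongly separates $(A,B)$, not that your diagonal formula does. The repair is to use the Rosser hypothesis as the paper does --- either through the $\sf DU$ machinery, or by applying it to the self-referential pair of RE sets built from $A$, $B$ and the provability/refutability of the formula with code $y$ as in Lemma \ref{lemma for exact Rosser}, which replaces your diagonal lemma and requires no internal witness comparison.
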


\subsection{A new definition of Rosser theories}\label{new def}

In this section, we give new definitions of $n$-Rosser theories, exact $n$-Rosser theories, effectively $n$-Rosser theories and effectively exact $n$-Rosser theories. Based on the notion of $n$-Rosser theories, we define that $T$ is Rosser if $T$ is $n$-Rosser for any $n\geq 1$. In these new definitions, the language of the base theory is not restricted to arithmetic languages (or is not required to admit numerals for natural numbers).  Instead, we only require that numerals of natural numbers are interpretable in the base theory (for the definition of interpretation, we refer to Definition \ref{def of interpretation}). For a theory $T$ whose language does not admit numerals, to make sure that we can talk about ``numerals" in $T$, our strategy is to propose a simple and natural theory of numerals and require that this theory of numerals is interpretable in  $T$. There are varied choices of a theory of numerals. The reason for our choice of the theory $\sf Num$ in Definition \ref{theory of numerals} is due to its simplicity and naturalness for us.

\begin{definition}\label{theory of numerals}
Let $\sf Num$ denote the theory in the language $\{\mathbf{0},\mathbf{S}\}$ with the following axiom scheme: $\overline{m}\neq \overline{n}$ if $m\neq n$, where $\overline{n}$ is defined recursively as $\overline{0}=\mathbf{0}$ and $\overline{n+1}=\mathbf{S}\overline{n}$.
\end{definition}

Now we introduce  $n$-Rosser theories, exact $n$-Rosser theories, effectively $n$-Rosser theories and effectively exact $n$-Rosser theories  in a general setting.

\begin{definition}\label{}
Let $T$ be a consistent RE theory.  Suppose $I: {\sf Num} \unlhd T$, $\phi(x_1, \cdots, x_n)$ is a formula with $n$-free variables, $M_1^n$ and $M_2^n$ are two $n$-ary RE relations.\footnote{For the definition of the notation $S\unlhd T$, we refer to Definition \ref{def of interpretation}.}
\begin{enumerate}[(1)]
  \item We say $\phi(x_1, \cdots, x_n)$ \emph{strongly separates} $M_1^n-M_2^n$ from $M_2^n-M_1^n$ in $T$ with respect to (w.r.t. for short) $I$ if   $(a_1, \cdots, a_n)\in M_1^n- M_2^n\Rightarrow T\vdash \phi(\overline{a_1}^I, \cdots, \overline{a_n}^I)$, and  $(a_1, \cdots, a_n)\in M_2^n- M_1^n\Rightarrow T\vdash \neg\phi(\overline{a_1}^I, \cdots, \overline{a_n}^I)$.
  \item
Suppose $M_1^n$ and $M_2^n$ are disjoint. We say $\phi(x_1, \cdots, x_n)$ \emph{exactly separates} $M_1^n$ from $M_2^n$ in $T$ w.r.t. $I$ if $(a_1, \cdots, a_n)\in M_1^n\Leftrightarrow T\vdash \phi(\overline{a_1}^I, \cdots, \overline{a_n}^I)$, and  $(a_1, \cdots, a_n)\in M_2^n\Leftrightarrow T\vdash \neg\phi(\overline{a_1}^I, \cdots, \overline{a_n}^I)$.
\end{enumerate}
\end{definition}

Let $\langle R^n_0, \cdots, R^n_i, \cdots\rangle$ be an acceptable listing of all $n$-ary RE relations. We always assume that $R^n_i$  is a $n$-ary RE relation with index $i$. In this paper, both $\overrightarrow{x}\in R^n_i$ and $R^n_i(\overrightarrow{x})$ mean that $R^n_i$ holds for $\overrightarrow{x}$.

\begin{definition}\label{def of Rosser}
Let $T$ be a consistent RE theory and $n\geq 1$.
\begin{enumerate}[(1)]
  \item We say $T$ is \emph{$n$-Rosser} if there exists an interpretation $I: {\sf Num} \unlhd T$ such that for any pair of $n$-ary RE relations $M_1^n$ and $M_2^n$, there is a formula $\phi(x_1, \cdots, x_n)$ with exactly $n$-free variables such that $\phi(x_1, \cdots, x_n)$ strongly separates $M_1^n-M_2^n$ from $M_2^n-M_1^n$ in $T$ w.r.t. $I$.
  \item We say $T$ is \emph{exact $n$-Rosser} if there exists an interpretation $I: {\sf Num} \unlhd T$ such that for any disjoint pair of $n$-ary RE relations $M_1^n$ and $M_2^n$, there is a formula $\phi(x_1, \cdots, x_n)$ with exactly $n$-free variables such that  $\phi(x_1, \cdots, x_n)$ exactly separates $M_1^n$ from $M_2^n$ in $T$ w.r.t. $I$.
\item
We say $T$ is \emph{effectively $n$-Rosser} if there exists an interpretation $I: {\sf Num} \unlhd T$ and a recursive function $f(i,j)$ such that for any pair of $n$-ary RE  relations  $R_i^n$ and $R_j^n$, $f(i,j)$ is the code of a formula $\phi(x_1, \cdots, x_n)$ with exactly $n$-free variables such that $\phi(x_1, \cdots, x_n)$ strongly separates $R_i^n-R_j^n$ from $R_j^n-R_i^n$ in $T$ w.r.t. $I$.
\item We say $T$ is \emph{effectively exact $n$-Rosser} if there exists an interpretation $I: {\sf Num} \unlhd T$ and a recursive function $f(i,j)$ such that for any pair of disjoint $n$-ary RE relations $R_i^n$ and $R_j^n$, $f(i,j)$ is the code of a formula $\phi(x_1, \cdots, x_n)$ with exactly $n$-free variables which exactly separates $R_i^n$ from $R_j^n$ in $T$ w.r.t. $I$.
    \item If the theory $T$ is a relational extension of {\sf Num}, we assume that the interpretation $I$ in above definitions is just the identity function.\footnote{I.e., for a given $I$, if it is based on a relational expansion of numerals, we just take it to be the identity function.}
\end{enumerate}
\end{definition}

\begin{definition}\label{general def}
Let $T$ be a consistent RE theory.
\begin{enumerate}[(1)]
  \item We say $T$ is \emph{Rosser} if for any $n\geq 1$, $T$ is $n$-Rosser.
  \item We say $T$ is \emph{exact Rosser} if for any $n\geq 1$, $T$ is exact $n$-Rosser.
  \item We say $T$ is \emph{effectively Rosser} if for any $n\geq 1$, $T$ is effectively $n$-Rosser.
  \item We say $T$ is \emph{effectively exact Rosser} if for any $n\geq 1$, $T$ is effectively exact $n$-Rosser.
\end{enumerate}
\end{definition}

One referee commented that each definition in Definition \ref{general def} has a local version and a global version (which is stronger): the local version allows the witnessing interpretation function $I$ to vary with $n$; and for the global version, there is a fixed interpretation function $I$ that works for all $n$. Definition \ref{general def} is the local version, and we did not explore the global version in this paper. In section \ref{generalization of theorem on rosser}, we show that all the four notions in Definition \ref{general def} are equivalent. If we formulate Definition \ref{general def} via the global version, from the proof of Theorem \ref{equivalent thm}, we can see that the four notions in the global version are also  equivalent.  

In the following, we aim to study  properties of $n$-Rosser theories, exact $n$-Rosser theories, effectively $n$-Rosser theories and effectively exact $n$-Rosser theories, and relationships among them. Especially, we will generalize Theorem \ref{effect Rosser} and Theorem \ref{PST} to $n$-Rosser theories for any $n\geq 1$.

\section{Preliminary}\label{prelim}

In this section, we list definitions and facts we will use later.
We always assume that $T$ is a consistent RE theory in a language.
Let $\langle W_i: i\in\omega\rangle$ be the list of all RE sets.
For any $x \in \mathbb{N}$, define $\overbrace{x}=(x, \cdots, x)\in \mathbb{N}^n$. The length of the vector $\overbrace{x}$ will be clear from the context.
We use  $E_n(x_1, \cdots, x_m)$ to denote a formula with code number $n$ whose free variables are among  $x_1, \cdots, x_m$.

Let $J_2(x,y)$ be the paring function, and $K(x)$ and $L(x)$ be the recursive functions such that for any $x,y\in\omega$, we have $K(J_2(x,y))=x$ and $L(J_2(x,y))=y$.
We can define the recursive $(n+1)$-ary pairing function as follow:
\[J_{n+1}(x_1, \cdots, x_{n+1})\triangleq J_2(J_{n}(x_1, \cdots, x_{n}),x_{n+1}).\]

Now we introduce the notion of interpretation.
\begin{definition}[Translations and interpretations, \cite{Visser16}, pp.10-13]~\label{def of interpretation}
\begin{itemize}
\item We use $L(T)$ to denote the language of the theory $T$. Let $T$ be a theory in a language $L(T)$, and $S$ a theory in a language $L(S)$. In its simplest form, a \emph{translation} $I$ of language $L(T)$ into language $L(S)$ is specified by the following:
\begin{itemize}
  \item an $L(S)$-formula $\delta_I(x)$ denoting the domain of $I$;
  \item for each relation symbol $R$ of $L(T)$,  as well as the equality relation =, an $L(S)$-formula $R_I$ of the same arity;
  \item for each function symbol $F$ of $L(T)$ of arity $k$, an $L(S)$-formula $F_I$ of arity $k + 1$.
\end{itemize}
\item If $\phi$ is an $L(T)$-formula, its \emph{$I$-translation} $\phi^I$ is an $L(S)$-formula constructed as follows: we rewrite the
formula in an equivalent way so that function symbols only occur in atomic subformulas of the
form $F(\overline{x}) = y$, where $\overline{x}, y$ are variables; then we replace each such atomic formula with $F_I(\overline{x}, y)$,
we replace each atomic formula of the form $R(\overline{x})$ with $R_I(\overline{x})$, and we restrict all quantifiers and
free variables to objects satisfying $\delta_I$. We take care to rename bound variables to avoid variable
capture during the process.
\item A translation $I$ of $L(T)$ into $L(S)$ is an \emph{interpretation} of $T$ in $S$ if $S$ proves the following:
\begin{itemize}
  \item for each function symbol $F$ of $L(T)$ of arity $k$, the formula expressing that $F_I$ is total on $\delta_I$:
\[\forall x_0, \cdots \forall x_{k-1} (\delta_I(x_0) \wedge \cdots \wedge \delta_I(x_{k-1}) \rightarrow \exists y (\delta_I(y) \wedge F_I(x_0, \cdots, x_{k-1}, y)));\]
  \item the $I$-translations of all theorems of $T$, and axioms of equality.
\end{itemize}
\item A theory $T$ is \emph{interpretable} in a theory $S$ if there exists an
interpretation of $T$ in $S$.
\item Given theories $T$ and $S$, let $I: T\unlhd S$ denote that $T$ is interpretable in $S$ (or $S$ interprets $T$) via an interpretation $I$.
\end{itemize}
\end{definition}

The theory $\mathbf{R}$ introduced in \cite{Tarski} is important in the study of meta-mathematics of arithmetic.

\begin{definition}\label{def of R}
Let $\mathbf{R}$ be the theory consisting of the following axiom schemes with signature  $\{\mathbf{0}, \mathbf{S}, +, \cdot\}$ where $x\leq y\triangleq \exists z (z+x=y)$.
\begin{description}
  \item[\sf{Ax1}] $\overline{m}+\overline{n}=\overline{m+n}$;
  \item[\sf{Ax2}] $\overline{m}\cdot\overline{n}=\overline{m\cdot n}$;
  \item[\sf{Ax3}] $\overline{m}\neq\overline{n}$, if $m\neq n$;
  \item[\sf{Ax4}] $\forall x(x\leq \overline{n}\rightarrow x=\overline{0}\vee \cdots \vee x=\overline{n})$;
  \item[\sf{Ax5}] $\forall x(x\leq \overline{n}\vee \overline{n}\leq x)$.
\end{description}
\end{definition}

\begin{lemma}[Separation Lemma, \cite{Smullyan93}]\label{}
For any RE sets $A$ and $B$, there exist RE sets $C$ and $D$ such that $A-B\subseteq C, B-A\subseteq D, C\cap D=\emptyset$ and $A\cup B=C\cup D$.
\end{lemma}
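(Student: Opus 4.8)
The plan is to use a standard simultaneous-enumeration (``race'') argument: we split $A\cup B$ into two pieces according to which of $A$ or $B$ a given element enters first, breaking ties in a fixed way. First I would fix uniformly computable stage approximations $A_s$ and $B_s$, that is, finite sets, increasing in $s$, with $\bigcup_s A_s=A$ and $\bigcup_s B_s=B$; these exist precisely because $A$ and $B$ are RE. For each $x$ let $t_A(x)$ be the least $s$ with $x\in A_s$ (undefined, i.e.\ $\infty$, when $x\notin A$), and define $t_B(x)$ symmetrically.

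Next I would define
\[
C=\{x:\ t_A(x)\le t_B(x)\},\qquad D=\{x:\ t_B(x)< t_A(x)\},
\]
where each inequality is understood to require the relevant entry time to be finite. Concretely, $x\in C$ holds exactly when there is a stage $s$ at which $x$ first enters $A$ while $x$ has not yet entered $B$ before stage $s$, and $x\in D$ holds exactly when there is a stage $s$ at which $x$ first enters $B$ while $x$ has not entered $A$ by stage $s$. The tie-breaking convention assigns the case $t_A(x)=t_B(x)$ to $C$.

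Then I would verify the four required properties. Disjointness is immediate, since the defining conditions $t_A\le t_B$ and $t_B< t_A$ are mutually exclusive. For the inclusions: if $x\in A-B$ then $t_A(x)<\infty=t_B(x)$, so $x\in C$, giving $A-B\subseteq C$; and if $x\in B-A$ then $t_B(x)<\infty=t_A(x)$, so $x\in D$, giving $B-A\subseteq D$. For the union, $C\cup D=\{x:\ \min(t_A(x),t_B(x))<\infty\}=A\cup B$, and since $C,D\subseteq A\cup B$ by construction, the two unions coincide.

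The main point, and the only genuine obstacle, is checking that $C$ and $D$ are themselves RE. This follows from the uniform computability of the approximations $A_s,B_s$: to semi-decide $x\in C$ I dovetail the two enumerations and accept $x$ at the first stage $s$ with $x\in A_s$ provided $x\notin B_{s-1}$, which captures exactly $t_A(x)\le t_B(x)$; the dual procedure works for $D$. Handling the tie-breaking correctly (ties sent to $C$) is what simultaneously secures disjointness and both inclusions, so this is the step I would present with the most care.
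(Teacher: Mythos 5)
Your proof is correct. The paper itself gives no proof of this lemma (it is quoted from Smullyan), but the construction you use --- the ``race'' between the enumerations of $A$ and $B$, assigning each element of $A\cup B$ to $C$ or $D$ according to which set it enters first --- is exactly the standard one, and the same idea appears in the paper as the ``$\overrightarrow{x}\in R^n_y$ before $\overrightarrow{x}\in R^n_z$'' relation of Theorem~\ref{comparison lemma}. One point worth noting: your asymmetric tie-break (ties go to $C$, via the condition $t_A(x)\le t_B(x)$ versus $t_B(x)<t_A(x)$) is precisely what secures $A\cup B=C\cup D$; the predicate $B(\overrightarrow{x},y,z)\triangleq\exists s\,[P(\overrightarrow{x},y,s)\wedge\forall t\leq s\,\neg P(\overrightarrow{x},z,t)]$ used in Theorem~\ref{comparison lemma} excludes simultaneous entry from both sides, which is harmless there because that theorem does not claim the union condition, but would lose elements entering $A$ and $B$ at the same stage if used verbatim here. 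Your verification of disjointness, the two inclusions, the union identity, and the recursive enumerability of $C$ and $D$ is complete.
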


By the separation lemma, the notion that $T$ is $n$-Rosser is equivalent with: there exists an interpretation $I: {\sf Num} \unlhd T$ such that for any disjoint $n$-ary RE relations $M_1^n$ and $M_2^n$, there exists a formula $\phi(x_1, \cdots, x_n)$ with exactly $n$-free variables such that $\phi(x_1, \cdots, x_n)$ strongly separates $M_1^n$ from $M_2^n$ in $T$ w.r.t. $I$. Similarly,  the notion that $T$ is effectively $n$-Rosser is equivalent with the version in which we can assume that the $n$-ary RE relations $M_1^n$ and $M_2^n$ are disjoint.

%\begin{theorem}[s-m-n Theorem, Theorem 2 in p.52, \cite{Smullyan93}]\label{}
%For any $m,n>0$ and any RE relation $M(x_1, \cdots, x_m, y_1, \cdots, y_n)$, there exists a recursive function $f(y_1, \cdots, y_n)$ such that for any $x_1, \cdots, x_m, y_1, \cdots, y_n\in\omega$, we have:
%\[(x_1, \cdots, x_m)\in R^m_{f(y_1, \cdots, y_n)}\Leftrightarrow M(x_1, \cdots, x_m, y_1, \cdots, y_n).\]
%\end{theorem}

\begin{definition}\label{}
For any $m, n\in \mathbb{N}$, we call a function $F: \mathbb{N}^m\rightarrow \mathbb{N}^n$ a \emph{$n$-ary functional on $\mathbb{N}^m$}.\footnote{Given a  $n$-ary functional $F: \mathbb{N}^m\rightarrow \mathbb{N}^n$ on $\mathbb{N}^m$, it naturally induces $m$-ary function $f_i: \mathbb{N}^m\rightarrow \mathbb{N}$ such that for any $\overrightarrow{a}\in \mathbb{N}^m$, $f_i(\overrightarrow{a})=F(\overrightarrow{a})_{i-1}$ for $1\leq i\leq n$.
Given $m$-ary functions $f_i: \mathbb{N}^m\rightarrow \mathbb{N}$ for $1\leq i\leq n$, we can naturally define a $n$-ary functional $F: \mathbb{N}^m\rightarrow \mathbb{N}^n$ on $\mathbb{N}^m$ such that for any $\overrightarrow{a}\in \mathbb{N}^m, F(\overrightarrow{a})=(f_1(\overrightarrow{a}), \cdots, f_n(\overrightarrow{a}))$.}
\end{definition}

\begin{convention}\label{}
Since there is a one-to-one correspondence between  a $n$-ary functional $F: \mathbb{N}^m\rightarrow \mathbb{N}^n$ on $\mathbb{N}^m$ and a sequence $(f_1(\overrightarrow{x}), \cdots, f_n(\overrightarrow{x}))$ of $m$-ary functions with length $n$, throughout this paper, we write a $n$-ary functional $F(\overrightarrow{x})$ on $\mathbb{N}^m$ as $F(\overrightarrow{x})=(f_1(\overrightarrow{x}), \cdots, f_n(\overrightarrow{x}))$.
\end{convention}

\begin{definition}\label{}
We say $F(\overrightarrow{x})=(f_1(\overrightarrow{x}), \cdots, f_n(\overrightarrow{x}))$ on $\mathbb{N}^m$ is a \emph{recursive $n$-ary functional}   if for any $1\leq i\leq n$, $f_i$ is a recursive $m$-ary function.
\end{definition}

We will use the s-m-n theorem throughout this paper and we refer it to \cite[Theorem 2, p.52]{Smullyan93}.

\section{Some basic facts about Rosser theories}\label{basic facts}

In this section, we prove some basic facts about Rosser theories.
Fact \ref{basic fact on n-Rosser} is an easy observation about relationships among notions in Definition \ref{def of Rosser}.

\begin{fact}\label{basic fact on n-Rosser}
\begin{enumerate}[(1)]
  \item For any $m> n$, if $T$ is $m$-Rosser, then $T$ is $n$-Rosser. As a corollary, if $T$ is $n$-Rosser for $n\geq 2$, then  $T$ is $1$-Rosser;
  \item Exact $n$-Rosser implies $n$-Rosser;
  \item Effectively $n$-Rosser implies $n$-Rosser;
  \item Effectively exact $n$-Rosser implies effectively $n$-Rosser;
\item  Effectively exact $n$-Rosser implies exact $n$-Rosser.
\end{enumerate}
\end{fact}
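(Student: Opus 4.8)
The plan is to prove each of the five implications by unwinding the definitions from Definition \ref{def of Rosser} and noting that each claimed implication follows by either weakening the separation condition or discarding the effectivity (recursiveness) requirement. Each part uses the \emph{same} witnessing interpretation $I: {\sf Num}\unlhd T$ and, where relevant, the \emph{same} witnessing formula or recursive function $f(i,j)$; the content is entirely that the hypothesis condition is logically stronger than the conclusion condition.

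\begin{proof}
For (2), suppose $T$ is exact $n$-Rosser with witnessing interpretation $I$. Given any \emph{disjoint} pair $M_1^n, M_2^n$ of $n$-ary RE relations, the exact $n$-Rosser property supplies a formula $\phi(x_1,\dots,x_n)$ that exactly separates $M_1^n$ from $M_2^n$ w.r.t. $I$, i.e. $(a_1,\dots,a_n)\in M_1^n\Leftrightarrow T\vdash\phi(\overline{a_1}^I,\dots,\overline{a_n}^I)$ and similarly for $M_2^n$ with $\neg\phi$. In particular the forward directions give $(a_1,\dots,a_n)\in M_1^n\Rightarrow T\vdash\phi(\overline{a_1}^I,\dots,\overline{a_n}^I)$ and $(a_1,\dots,a_n)\in M_2^n\Rightarrow T\vdash\neg\phi(\overline{a_1}^I,\dots,\overline{a_n}^I)$, so $\phi$ strongly separates $M_1^n$ from $M_2^n$. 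By the remark following the Separation Lemma, establishing strong separation on \emph{disjoint} pairs suffices to witness the $n$-Rosser property, so $T$ is $n$-Rosser.

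For (3), suppose $T$ is effectively $n$-Rosser with interpretation $I$ and recursive $f(i,j)$. Given any pair $M_1^n, M_2^n$, fix indices $i,j$ with $M_1^n=R_i^n$ and $M_2^n=R_j^n$; then $f(i,j)$ is the code of a formula $\phi$ that strongly separates $R_i^n-R_j^n$ from $R_j^n-R_i^n$, which is exactly the separation condition in the definition of $n$-Rosser. Discarding the fact that $\phi$ was produced uniformly by $f$, we conclude $T$ is $n$-Rosser. Parts (4) and (5) are analogous: if $T$ is effectively exact $n$-Rosser with interpretation $I$ and recursive $f(i,j)$, then for each disjoint pair $R_i^n, R_j^n$ the formula coded by $f(i,j)$ exactly separates $R_i^n$ from $R_j^n$; arguing as in (2) this formula also strongly separates them, so the \emph{same} $f$ witnesses that $T$ is effectively $n$-Rosser, giving (4), and forgetting $f$ entirely gives exact $n$-Rosser, giving (5).

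Part (1) is the one genuinely requiring an argument rather than a tautological weakening, and is the only step I expect to need care. Assume $T$ is $m$-Rosser with interpretation $I$ and $m>n$; I want to show $T$ is $n$-Rosser with the same $I$. Given a disjoint pair $M_1^n, M_2^n$ of $n$-ary RE relations, I pad them to $m$-ary relations by adjoining $m-n$ fixed coordinates, e.g. set $\widehat{M}_k^m=\{(a_1,\dots,a_n,0,\dots,0): (a_1,\dots,a_n)\in M_k^n\}$ for $k=1,2$, which are again RE and disjoint. Applying the $m$-Rosser property yields a formula $\psi(x_1,\dots,x_m)$ strongly separating $\widehat{M}_1^m$ from $\widehat{M}_2^m$; I then define $\phi(x_1,\dots,x_n)\triangleq\psi(x_1,\dots,x_n,\overline{0}^I,\dots,\overline{0}^I)$ by substituting the interpreted numeral $\overline{0}^I$ for the last $m-n$ arguments. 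For $(a_1,\dots,a_n)\in M_1^n$ we have $(a_1,\dots,a_n,0,\dots,0)\in\widehat{M}_1^m$, hence $T\vdash\psi(\overline{a_1}^I,\dots,\overline{a_n}^I,\overline{0}^I,\dots,\overline{0}^I)$, i.e. $T\vdash\phi(\overline{a_1}^I,\dots,\overline{a_n}^I)$, and symmetrically with $\neg\phi$ for $M_2^n$, so $\phi$ strongly separates $M_1^n$ from $M_2^n$ and $T$ is $n$-Rosser. The corollary that $n$-Rosser for some $n\geq 2$ implies $1$-Rosser is the special case of this with the target arity equal to $1$.
\end{proof}
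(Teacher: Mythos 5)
Your proof is correct and follows essentially the same route as the paper: the paper likewise proves only (1), by padding the $n$-ary relations to $m$-ary ones and substituting $\overline{0}^I$ into the extra argument places, and dismisses (2)--(5) as trivial unwindings of the definitions. The only cosmetic difference is that the paper pads by taking the full cylinder over $M^n_k$ (membership ignores the extra coordinates) rather than your slice with the extra coordinates fixed to $0$; both work identically since the separating formula is only ever evaluated at tuples whose extra coordinates are $0$.
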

\begin{proof}\label{}
We only prove (1): the other claims are trivial. Suppose $m> n$ and $T$ is $m$-Rosser under $I: {\sf Num}\unlhd T$. We show that $T$ is $n$-Rosser. Suppose $M^n_1$ and $M^n_2$ are two disjoint $n$-ary RE relations. Define two $m$-ary RE relations $S^m_1$ and $S^m_2$ such that for any $\overrightarrow{a}=(a_1, \cdots, a_m)\in \mathbb{N}^m$, $(a_1, \cdots, a_m)\in S^m_1\Leftrightarrow (a_1, \cdots, a_n)\in M^n_1$ and $(a_1, \cdots, a_m)\in S^m_2\Leftrightarrow (a_1, \cdots, a_n)\in M^n_2$. Since $T$ is $m$-Rosser, there is a formula $\phi(x_1, \cdots, x_m)$ with $m$-free variables which strongly separates $S^m_1$ from $S^m_2$. Define $\psi(x_1, \cdots, x_n)\triangleq \phi(x_1, \cdots, x_n, \overline{0}^I,\cdots, \overline{0}^I)$. Then $\psi(x_1, \cdots, x_n)$ strongly separates $M^n_1$ from $M^n_2$.
\end{proof}

\begin{definition}\label{}
We say that a consistent RE theory $T$ is \emph{essentially Rosser} if any consistent RE extension of $T$ is also Rosser.
\end{definition}

\begin{proposition}
A theory $T$ is Rosser if and only if $T$ is essentially Rosser.
\end{proposition}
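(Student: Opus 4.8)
The plan is to prove both directions, with the substance lying entirely in the forward implication; the backward implication is immediate. First I would observe that $T$ is itself a consistent RE extension of $T$ (the trivial one), so if every consistent RE extension of $T$ is Rosser, then in particular $T$ is Rosser. This settles the ``if'' direction.

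For the ``only if'' direction, suppose $T$ is Rosser and let $T'$ be an arbitrary consistent RE extension of $T$. I would show $T'$ is $n$-Rosser for every $n\geq 1$. Fix $n$. Since $T$ is $n$-Rosser, there is an interpretation $I:{\sf Num}\unlhd T$ such that every disjoint pair of $n$-ary RE relations $M_1^n,M_2^n$ is strongly separated in $T$ w.r.t. $I$ by some formula $\phi(x_1,\dots,x_n)$. The two facts that drive the argument are: (i) an interpretation of ${\sf Num}$ in $T$ remains an interpretation of ${\sf Num}$ in $T'$, because $T'$ proves everything $T$ proves and hence still proves the $I$-translations of the axioms of ${\sf Num}$ together with the totality and equality requirements of Definition \ref{def of interpretation}; and (ii) provability is monotone under extension, i.e.\ $T\vdash\psi$ implies $T'\vdash\psi$.

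Combining these, I would argue that the very same interpretation $I$ and the very same separating formula $\phi$ witness $n$-Rosserness for $T'$: whenever $(a_1,\dots,a_n)\in M_1^n-M_2^n$ we have $T\vdash\phi(\overline{a_1}^I,\dots,\overline{a_n}^I)$ and hence $T'\vdash\phi(\overline{a_1}^I,\dots,\overline{a_n}^I)$ by (ii), and symmetrically $T'\vdash\neg\phi(\overline{a_1}^I,\dots,\overline{a_n}^I)$ whenever $(a_1,\dots,a_n)\in M_2^n-M_1^n$. Thus $\phi$ strongly separates $M_1^n-M_2^n$ from $M_2^n-M_1^n$ in $T'$ w.r.t.\ $I$. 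Since $M_1^n,M_2^n$ were arbitrary and $I$ is fixed, $T'$ is $n$-Rosser, and as $n$ was arbitrary, $T'$ is Rosser.

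There is no serious obstacle here; the only point requiring care is that the definition of essentially Rosser quantifies over \emph{consistent} extensions, so the consistency of $T'$ is part of the hypothesis and need not be re-derived. It is worth noting that the same monotonicity argument applies verbatim to the exact, effective, and effectively exact versions as well, so each of the four notions in Definition \ref{general def} coincides with its ``essential'' counterpart; I would at most remark on this rather than repeat the proof.
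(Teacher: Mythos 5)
Your proposal is correct and follows the same route as the paper: the paper simply cites the fact that Rosserness is inherited by consistent RE extensions (via monotonicity of provability), which is exactly what your forward direction establishes, and the backward direction is the trivial observation that $T$ extends itself. You have merely written out the routine details that the paper leaves implicit.
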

\begin{proof}\label{}
This follows from the fact: if $T$ is Rosser and $S$ is a consistent RE extension of $T$, then $S$ is Rosser.
\end{proof}

\begin{theorem}\label{pre thm on inter}
If $T$ is Rosser and $T$ is interpretable in $S$, then $S$ is Rosser.
\end{theorem}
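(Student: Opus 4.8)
The plan is to prove that Rosserness is preserved under interpretation by composing interpretations. Suppose $T$ is Rosser and $J: T \unlhd S$ witnesses that $T$ is interpretable in $S$. Since $T$ is Rosser, in particular $T$ is $n$-Rosser for every $n \geq 1$, so there is an interpretation $I: {\sf Num} \unlhd T$ realizing the separation property. The natural idea is to form the composition $J \circ I : {\sf Num} \unlhd S$, which is again an interpretation (composition of interpretations is an interpretation), and to argue that this composite interpretation witnesses that $S$ is $n$-Rosser.

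The key steps, in order, are as follows. First I would recall that interpretations compose: given $I: {\sf Num}\unlhd T$ and $J: T\unlhd S$, the translation $J\circ I$ is an interpretation of ${\sf Num}$ in $S$, and for any ${\sf Num}$-formula $\chi$ one has $\chi^{J\circ I} = (\chi^I)^J$ up to the usual bookkeeping. In particular the numerals transfer: $\overline{a}^{J\circ I}$ corresponds to $(\overline{a}^I)^J$. Second, fix $n\geq 1$ and a pair of $n$-ary RE relations $M_1^n, M_2^n$. Since $T$ is $n$-Rosser via $I$, there is an $L(T)$-formula $\phi(x_1,\dots,x_n)$ that strongly separates $M_1^n - M_2^n$ from $M_2^n - M_1^n$ in $T$ w.r.t.\ $I$. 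Third, I would take $\psi(x_1,\dots,x_n) \triangleq \phi^J(x_1,\dots,x_n)$, the $J$-translation of $\phi$, as the candidate separating formula in $S$ w.r.t.\ the composite interpretation. The verification uses that $J$ is an interpretation of $T$ in $S$: if $(a_1,\dots,a_n)\in M_1^n - M_2^n$, then $T\vdash \phi(\overline{a_1}^I,\dots,\overline{a_n}^I)$, and applying the translation $J$ (which maps theorems of $T$ to theorems of $S$) yields $S\vdash \phi^J((\overline{a_1}^I)^J,\dots,(\overline{a_n}^I)^J)$, i.e.\ $S\vdash \psi(\overline{a_1}^{J\circ I},\dots,\overline{a_n}^{J\circ I})$; the negative case is symmetric since $(\neg\phi)^J = \neg(\phi^J)$. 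Hence $\psi$ strongly separates $M_1^n - M_2^n$ from $M_2^n - M_1^n$ in $S$ w.r.t.\ $J\circ I$, so $S$ is $n$-Rosser. Since $n$ was arbitrary, $S$ is Rosser.

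The main technical obstacle is the careful handling of the translation mechanics rather than any deep idea. One must check that substituting translated numerals into a translated formula agrees with translating the formula obtained by substituting numerals — that is, that translation commutes with substitution of closed ${\sf Num}$-terms into free variables — so that the displayed equalities $\phi^J(\dots,(\overline{a_i}^I)^J,\dots) = \psi(\dots,\overline{a_i}^{J\circ I},\dots)$ genuinely hold as $L(S)$-sentences provable-equivalent in $S$. This requires knowing that the domain formula and the totality of the interpreted function symbols are respected, and that the relativization of quantifiers in $\phi$ and in the numeral terms is compatible under $J$; all of this is standard for interpretations, so I would cite the composition property from \cite{Visser16} and invoke it. A secondary point is the clause (5) of Definition \ref{def of Rosser} concerning relational extensions of ${\sf Num}$: if $S$ happens to be such an extension the identity interpretation is mandated, but since we only need \emph{some} witnessing interpretation for the local notion of Rosserness, the composite $J\circ I$ suffices and no conflict arises.
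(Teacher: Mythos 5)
Your proposal is correct and follows essentially the same route as the paper's own proof: compose the interpretations to get $K = J\circ I:{\sf Num}\unlhd S$, take $\psi = \phi^J$ as the separating formula, and use that $J$ carries theorems of $T$ to theorems of $S$ together with $(\overline{a_i}^I)^J=\overline{a_i}^K$. The additional remarks you make about translation commuting with substitution and about clause (5) of Definition \ref{def of Rosser} are points the paper leaves implicit, but they do not change the argument.
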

\begin{proof}\label{}
It suffices to show that for any $n\geq 1$, if $T$ is $n$-Rosser and $T$ is interpretable in $S$, then $S$ is $n$-Rosser.
Suppose $I$ is the witness interpretation for $T$ being $n$-Rosser, and $J$ is the witness interpretation for $T$ being interpretable in $S$. Define $K= J\circ I$.
We show that  $K$  is the witness interpretation for $S$ being $n$-Rosser.
Suppose $M^n_1$ and $M^n_2$ are two disjoint $n$-ary RE relation. There exists $\phi(v_1, \cdots, v_n)$ such that for any $\overrightarrow{a}=(a_1, \cdots, a_n)\in \mathbb{N}^n$,
\begin{align*}
\overrightarrow{a}\in M^n_1\Rightarrow T\vdash  \phi(\overline{a_1}^I, \cdots, \overline{a_n}^I)\Rightarrow S\vdash \phi^{J}((\overline{a_1}^I)^J, \cdots, (\overline{a_n}^I)^J); \\
\overrightarrow{a}\in M^n_2\Rightarrow T\vdash  \neg\phi(\overline{a_1}^I, \cdots, \overline{a_n}^I)\Rightarrow S\vdash \neg\phi^{J}((\overline{a_1}^I)^J, \cdots, (\overline{a_n}^I)^J).
\end{align*}
Note that $(\overline{a_i}^I)^J=\overline{a_i}^K$ for $1\leq i\leq n$. Define $\psi(\overrightarrow{x})=\phi^{J}(\overrightarrow{x})$. Then $\overrightarrow{a}\in M^n_1\Rightarrow S\vdash  \psi(\overline{a_1}^K, \cdots, \overline{a_n}^K)$ and
$\overrightarrow{a}\in M^n_2\Rightarrow S\vdash  \neg\psi(\overline{a_1}^K, \cdots, \overline{a_n}^K)$. Thus, $\psi(\overrightarrow{x})$ strongly separates $M^n_1$ from $M^n_2$ in $S$ w.r.t. $K$.
\end{proof}

A natural question is: is there any natural example of Rosser theories? The theory  $\mathbf{R}$ is  such a natural example. Moreover, from results in this paper, we can see that the theory $\mathbf{R}$ has all properties we have introduced in this paper. 

\begin{theorem}\label{R is Rosser}
The theory  $\mathbf{R}$ is  Rosser.
\end{theorem}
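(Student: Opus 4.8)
The plan is to show that $\mathbf{R}$ is $n$-Rosser for every $n\geq 1$, which by Definition \ref{general def} gives that $\mathbf{R}$ is Rosser. Since $\mathbf{R}$ contains the axiom scheme $\overline{m}\neq\overline{n}$ for $m\neq n$ (namely {\sf Ax3}) and its language $\{\mathbf{0},\mathbf{S},+,\cdot\}$ contains $\{\mathbf{0},\mathbf{S}\}$, the theory $\mathbf{R}$ is a relational extension of ${\sf Num}$, so by Definition \ref{def of Rosser}(5) we may take the witnessing interpretation $I$ to be the identity and work directly with the numerals $\overline{n}$ of $\mathbf{R}$. By the remark following the Separation Lemma, it suffices to exhibit, for each pair of \emph{disjoint} $n$-ary RE relations $M_1^n$ and $M_2^n$, a formula $\phi(x_1,\ldots,x_n)$ that strongly separates $M_1^n$ from $M_2^n$ in $\mathbf{R}$; that is, $\vec a\in M_1^n\Rightarrow \mathbf{R}\vdash\phi(\overline{a_1},\ldots,\overline{a_n})$ and $\vec a\in M_2^n\Rightarrow \mathbf{R}\vdash\neg\phi(\overline{a_1},\ldots,\overline{a_n})$.

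Next I would reduce the $n$-ary problem to the one-dimensional Rosser construction using the pairing machinery already set up in Section \ref{prelim}. Using the recursive bijective pairing function $J_n:\mathbb{N}^n\to\mathbb{N}$ together with its inverses, I would transfer $M_1^n$ and $M_2^n$ to disjoint RE \emph{sets} $A=J_n[M_1^n]$ and $B=J_n[M_2^n]$ on $\mathbb{N}$. Because $A$ and $B$ are RE, there are indices enumerating them, and the classical Rosser construction for $\mathbf{R}$ produces a one-variable formula $\rho(y)$ that separates $A$ from $B$: the formula asserts, via the standard proof-predicate / Rosser-witness comparison, that a witness placing $y$ into $A$ appears no later than any witness placing $y$ into $B$. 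The key arithmetical inputs are that $\mathbf{R}$ proves $\overline{m}\neq\overline{n}$ for $m\neq n$, proves the bounded-quantifier facts {\sf Ax4} and {\sf Ax5}, and numeralwise represents all recursive relations; these are exactly the properties that make $\mathbf{R}$ strong enough to verify, for each fixed numeral input, the finite disjunction/comparison that the Rosser formula encodes. I would then pull $\rho$ back along $J_n$, defining
\[
\phi(x_1,\ldots,x_n)\;\triangleq\;\exists y\,\bigl(\beta_{J_n}(x_1,\ldots,x_n,y)\wedge \rho(y)\bigr),
\]
where $\beta_{J_n}$ is the formula numeralwise representing the graph of $J_n$ in $\mathbf{R}$. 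For an input $\vec a\in M_1^n$ one has $\mathbf{R}\vdash\beta_{J_n}(\overline{a_1},\ldots,\overline{a_n},\overline{J_n(\vec a)})$ and $\mathbf{R}\vdash\rho(\overline{J_n(\vec a)})$, hence $\mathbf{R}\vdash\phi(\overline{a_1},\ldots,\overline{a_n})$; for $\vec a\in M_2^n$, functionality of $\beta_{J_n}$ in $\mathbf{R}$ lets $\neg\rho(\overline{J_n(\vec a)})$ propagate to $\mathbf{R}\vdash\neg\phi(\overline{a_1},\ldots,\overline{a_n})$.

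The main obstacle is establishing the base case cleanly over the weak theory $\mathbf{R}$ rather than over a strong arithmetic: $\mathbf{R}$ has no induction, so I cannot argue by reasoning about arbitrary elements, only numeralwise. Concretely, the delicate point is verifying that for each \emph{fixed} standard input $\rho$ really is provably decided one way or the other in $\mathbf{R}$; this rests on the fact that $\mathbf{R}$ numeralwise represents every recursive function and relation and, crucially, proves the bounded totality/trichotomy facts {\sf Ax4}, {\sf Ax5} that let a bounded search over proof-codes be replaced by a provable finite disjunction $x=\overline{0}\vee\cdots\vee x=\overline{k}$. Pulling back through $\beta_{J_n}$ then requires that $\mathbf{R}$ prove the uniqueness clause $\forall y\forall y'(\beta_{J_n}(\overline{a_1},\ldots,\overline{a_n},y)\wedge\beta_{J_n}(\overline{a_1},\ldots,\overline{a_n},y')\rightarrow y=y')$ for each numeral tuple, which is again a numeralwise representability fact. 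Once these standard but somewhat technical representability lemmas for $\mathbf{R}$ are in hand, the construction goes through uniformly in $n$, giving that $\mathbf{R}$ is $n$-Rosser for all $n\geq 1$ and hence Rosser.
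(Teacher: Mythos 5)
Your proposal is correct, but it takes a genuinely different route from the paper. The paper's proof is a one-line appeal to a direct generalization: it redoes the classical Rosser witness-comparison construction for $\mathbf{R}$ with $n$-tuples of variables in place of a single variable, so the separating formula is built directly in $n$ free variables and no pairing apparatus is needed. You instead keep the classical one-variable result as a black box and reduce the $n$-ary case to it: you push the disjoint pair $(M_1^n,M_2^n)$ forward along the injective pairing function $J_n$ to a disjoint pair of RE sets, apply the known fact that $\mathbf{R}$ is $1$-Rosser, and pull the separating formula back through a formula $\beta_{J_n}$ that strongly defines (numeralwise represents with provable uniqueness) the graph of $J_n$ in $\mathbf{R}$. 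This is essentially an instance of the paper's own later result in Section \ref{equi under def} that $n$-Rosser implies $(n+1)$-Rosser whenever the pairing function is strongly definable, specialized to $T=\mathbf{R}$, where strong definability of all recursive functions is the classical Tarski--Mostowski--Robinson fact. You correctly isolate the one point where the reduction could fail over so weak a theory --- the provable uniqueness clause for $\beta_{J_n}$ at each numeral tuple, which is what lets $\neg\rho(\overline{J_n(\vec a)})$ propagate to $\neg\phi(\overline{a_1},\ldots,\overline{a_n})$. What each approach buys: the paper's direct generalization is self-contained and needs nothing beyond {\sf Ax3}--{\sf Ax5} applied coordinatewise, while yours is more modular, reuses the $1$-ary case verbatim, and makes visible that the only extra ingredient for higher arities is strong definability of pairing --- at the cost of invoking the representability machinery for $J_n$ that the paper's direct argument avoids.
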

\begin{proof}\label{}
It is a well known fact that the theory $\mathbf{R}$ is Rosser  for RE sets (see \cite{Smullyan93}). The proof that $\mathbf{R}$ is  Rosser under Definition \ref{general def} is a straightforward
generalization to more variables, of the classical proof that $\mathbf{R}$ is Rosser  for RE sets.
\end{proof}

\begin{corollary}\label{}
Both $\mathbf{PA}$ and $\mathbf{ZFC}$ are Rosser.
\end{corollary}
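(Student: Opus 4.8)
The final statement to prove is the Corollary asserting that both $\mathbf{PA}$ and $\mathbf{ZFC}$ are Rosser.

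\bigskip

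The plan is to derive this immediately from the two preceding results: Theorem \ref{R is Rosser}, which establishes that $\mathbf{R}$ is Rosser, and Theorem \ref{pre thm on inter}, which shows that Rosser-ness is preserved upward under interpretability. The key observation is that $\mathbf{R}$ is interpretable in both $\mathbf{PA}$ and $\mathbf{ZFC}$, so the corollary will follow by a single application of Theorem \ref{pre thm on inter} in each case.

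\bigskip

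First I would record that $\mathbf{R}$ is interpretable in $\mathbf{PA}$. In fact this is an especially easy instance: the signature of $\mathbf{R}$ is $\{\mathbf{0},\mathbf{S},+,\cdot\}$, which is exactly the signature of $\mathbf{PA}$, and every axiom scheme $\mathsf{Ax1}$--$\mathsf{Ax5}$ of Definition \ref{def of R} is a theorem of $\mathbf{PA}$ (the instances about sums and products of numerals follow by induction, and the bounded-quantifier schemes $\mathsf{Ax4}$ and $\mathsf{Ax5}$ are standard consequences of the order axioms of $\mathbf{PA}$). Hence the identity translation witnesses $\mathbf{R}\unlhd\mathbf{PA}$. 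Next I would record that $\mathbf{R}$ is interpretable in $\mathbf{ZFC}$: this is the classical fact that $\mathbf{ZFC}$ interprets full first-order arithmetic on the finite ordinals $\omega$, with $\mathbf{0}$, $\mathbf{S}$, $+$ and $\cdot$ interpreted by the usual set-theoretic definitions, and since $\mathbf{R}$ is interpretable in $\mathbf{PA}$ (indeed in much weaker theories such as $\mathbf{Q}$) and interpretability composes, we obtain $\mathbf{R}\unlhd\mathbf{ZFC}$.

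\bigskip

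With these two interpretability facts in hand, the conclusion is immediate. By Theorem \ref{R is Rosser}, $\mathbf{R}$ is Rosser; applying Theorem \ref{pre thm on inter} with the interpretation $\mathbf{R}\unlhd\mathbf{PA}$ yields that $\mathbf{PA}$ is Rosser, and applying it with $\mathbf{R}\unlhd\mathbf{ZFC}$ yields that $\mathbf{ZFC}$ is Rosser. I do not expect any genuine obstacle here, since all the hard work is already packaged into Theorem \ref{R is Rosser} and Theorem \ref{pre thm on inter}; the only thing to be careful about is that both $\mathbf{PA}$ and $\mathbf{ZFC}$ are consistent RE theories, so that Theorem \ref{pre thm on inter} genuinely applies. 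The mild point worth a sentence of comment is that this is precisely the payoff of the new, interpretation-based Definition \ref{general def}: although the language of $\mathbf{ZFC}$ does not admit numerals in the syntactic sense, numerals are interpretable in it via $\mathbf{R}$, so $\mathbf{ZFC}$ now falls within the scope of the definition and the question raised in the introduction (whether $\mathbf{ZFC}$ is Rosser) receives an affirmative answer.
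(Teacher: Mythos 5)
Your proposal is correct and follows exactly the paper's own route: the corollary is derived from Theorem \ref{R is Rosser} together with Theorem \ref{pre thm on inter}, using the standard facts that $\mathbf{R}$ is interpretable in both $\mathbf{PA}$ and $\mathbf{ZFC}$. The extra detail you supply about why these interpretations exist is accurate but not needed beyond what the paper states.
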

\begin{proof}\label{}
Follows from Theorem \ref{R is Rosser} and Theorem \ref{pre thm on inter}.
\end{proof}

In the rest of this section, we examine the relationship between Rosser theories and effectively inseparable theories. 

\begin{definition}[The nuclei of a theory, {\sf EI} theories]~\label{def of 1.1}
Let $T$ be a consistent RE theory, and $(A,B)$ be a disjoint pair of RE sets.
\begin{enumerate}[(1)]
\item A pair $(A,B)$ of disjoint RE sets is  \emph{effectively inseparable} $(\sf EI)$ if there is a recursive function $f(x,y)$ such that for any $i, j\in\omega$, if $A\subseteq W_i, B\subseteq W_j$ and $W_i\cap W_j=\emptyset$, then $f(i,j)\notin W_i\cup W_j$.
    \item The pair $(T_P, T_R)$ is called the \emph{nuclei} of a theory $T$, where  $T_P$ is the set of G\"{o}del numbers of sentences provable in $T$, and $T_R$ is the set of G\"{o}del numbers of sentences refutable in $T$ (i.e., $T_P=\{\ulcorner\phi\urcorner: T\vdash\phi\}$ and $T_R=\{\ulcorner\phi\urcorner: T\vdash\neg\phi\}$).
        \item We say $T$ is \emph{effectively inseparable} $(\sf EI)$ if $(T_P, T_R)$ is $\sf EI$.
\end{enumerate}
\end{definition}

\begin{theorem}[\cite{Smullyan93}, pp.70-126]~\label{EI thm}
For any consistent RE theory $T$, $T$ is {\sf EI} iff for any disjoint pair $(A,B)$ of RE sets, there is a recursive function $f(x)$ such that if $x\in A$, then $f(x)\in T_P$, and if $x\in B$, then $f(x)\in T_R$.
\end{theorem}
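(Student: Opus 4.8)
The plan is to prove the two implications separately: the forward direction ($T$ is {\sf EI} $\Rightarrow$ the reduction property) is the substantial one and rests on the recursion theorem to legitimize a circular definition, while the converse follows by pulling back a single, fixed effectively inseparable pair. Throughout I will use the $\mathsf{s\text{-}m\text{-}n}$ theorem freely, and I note that consistency of $T$ gives $T_P\cap T_R=\emptyset$.

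For the converse, suppose that every disjoint RE pair reduces to $(T_P,T_R)$ in the stated sense, and fix once and for all a concrete disjoint pair of RE sets that is {\sf EI} independently of $T$, for instance $K_0=\{x:\varphi_x(x)\simeq 0\}$ and $K_1=\{x:\varphi_x(x)\simeq 1\}$, with inseparability function $g_0$ (this is a classical {\sf EI} pair). Applying the hypothesis to $(K_0,K_1)$ yields a recursive $f$ with $K_0\subseteq f^{-1}(T_P)$ and $K_1\subseteq f^{-1}(T_R)$. Now, given any $i,j$ with $T_P\subseteq W_i$, $T_R\subseteq W_j$ and $W_i\cap W_j=\emptyset$, the RE sets $f^{-1}(W_i)\supseteq K_0$ and $f^{-1}(W_j)\supseteq K_1$ are disjoint (as $f^{-1}(W_i)\cap f^{-1}(W_j)=f^{-1}(W_i\cap W_j)=\emptyset$) and hence separate $(K_0,K_1)$; picking recursive $p,q$ with $W_{p(i)}=f^{-1}(W_i)$ and $W_{q(j)}=f^{-1}(W_j)$ by $\mathsf{s\text{-}m\text{-}n}$, I set $g(i,j)=f(g_0(p(i),q(j)))$. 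Since $g_0(p(i),q(j))\notin W_{p(i)}\cup W_{q(j)}$, we get $g(i,j)\notin W_i\cup W_j$, so $(T_P,T_R)$ is {\sf EI}.

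For the forward direction, let $h$ be an inseparability function for $(T_P,T_R)$, and let $(A,B)$ be any disjoint RE pair. Using the double recursion theorem with parameter $x$, I would obtain recursive functions $u,v$ such that $W_{u(x)}$ enumerates all of $T_P$ and additionally enumerates $h(u(x),v(x))$ if $x\in B$, while $W_{v(x)}$ enumerates all of $T_R$ and additionally enumerates $h(u(x),v(x))$ if $x\in A$; then set $f(x)=h(u(x),v(x))$. To verify it, suppose $x\in A$ and, for contradiction, $f(x)\notin T_P$. Then $W_{u(x)}=T_P$ and $W_{v(x)}=T_R\cup\{f(x)\}$ are disjoint and separate $(T_P,T_R)$, so the defining property of $h$ forces $f(x)=h(u(x),v(x))\notin W_{u(x)}\cup W_{v(x)}$, contradicting $f(x)\in W_{v(x)}$; hence $f(x)\in T_P$. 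By the symmetric argument $x\in B\Rightarrow f(x)\in T_R$ (nothing is required when $x\notin A\cup B$). The main obstacle is precisely this circularity: the number $f(x)=h(u(x),v(x))$ that must be enumerated into $W_{u(x)}$ or $W_{v(x)}$ depends on the indices $u(x),v(x)$ of the very machines carrying out the enumeration, and it is the recursion theorem that makes this self-reference legitimate; carrying out that fixed-point construction correctly and uniformly in $x$ is the delicate point of the whole argument.
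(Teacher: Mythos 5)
Your proof is correct. The paper states this theorem only as a citation to Smullyan without reproducing a proof, and your argument is essentially the standard one from that source: the converse direction by pulling back a fixed effectively inseparable pair $(K_0,K_1)$ along $f$ via the s-m-n theorem, and the forward direction by the self-referential construction of $W_{u(x)},W_{v(x)}$ legitimized by the double recursion theorem with parameters --- which is exactly what the paper's own Theorem \ref{original SDRT} supplies, so the functions $u,v$ you describe do exist and the verification you give goes through.
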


\begin{theorem}\label{}
For any $n\geq 1$, if $T$ is $n$-Rosser, then $T$ is $\sf EI$. Thus, if $T$  is Rosser, then $T$ is $\sf EI$.
\end{theorem}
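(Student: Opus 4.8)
The plan is to reduce everything to the case $n = 1$ and then invoke the characterization of $\sf EI$ theories in Theorem \ref{EI thm}. By Fact \ref{basic fact on n-Rosser}(1), an $n$-Rosser theory with $n \geq 2$ is automatically $1$-Rosser, so it suffices to prove that being $1$-Rosser implies being $\sf EI$; this also settles the final clause, since a Rosser theory is in particular $1$-Rosser.

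So suppose $T$ is $1$-Rosser, witnessed by an interpretation $I : {\sf Num} \unlhd T$. By the right-to-left direction of Theorem \ref{EI thm}, to show that $T$ is $\sf EI$ it is enough to exhibit, for each disjoint pair $(A,B)$ of RE sets, a recursive function $f(x)$ such that $x \in A \Rightarrow f(x) \in T_P$ and $x \in B \Rightarrow f(x) \in T_R$. I would fix such a pair and regard $A$ and $B$ as $1$-ary RE relations. Using the disjoint reformulation of $n$-Rosser recorded after the Separation Lemma (for disjoint $A, B$ we have $A - B = A$ and $B - A = B$), $1$-Rosserness yields a one-variable formula $\phi(x)$ that strongly separates $A$ from $B$ in $T$ w.r.t. $I$, that is, $x \in A \Rightarrow T \vdash \phi(\overline{x}^I)$ and $x \in B \Rightarrow T \vdash \neg\phi(\overline{x}^I)$.

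I would then set $f(x) = \ulcorner \phi(\overline{x}^I) \urcorner$. This is recursive in $x$: the numeral $\overline{x}$ is generated effectively as $\mathbf{S}\cdots\mathbf{S}\mathbf{0}$, forming its $I$-translation $\overline{x}^I$ and substituting it into the fixed formula $\phi$ are effective operations on codes, and the G\"{o}del coding is recursive. The two displayed implications give exactly $x \in A \Rightarrow f(x) \in T_P$ and $x \in B \Rightarrow f(x) \in T_R$, so $f$ witnesses the condition of Theorem \ref{EI thm}, and hence $T$ is $\sf EI$.

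There is no serious obstacle here, since the argument simply reinterprets the $1$-Rosser witness $\phi$ as a reduction of the pair $(A,B)$ to the nuclei $(T_P, T_R)$. The only points requiring care are the passage to $n = 1$ via Fact \ref{basic fact on n-Rosser} and the verification that $x \mapsto \ulcorner \phi(\overline{x}^I) \urcorner$ is recursive, which rests on the effectiveness of building numerals, applying the fixed interpretation $I$, and computing G\"{o}del codes.
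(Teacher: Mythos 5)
Your proposal is correct and follows essentially the same route as the paper's own proof: reduce to the $1$-Rosser case via Fact \ref{basic fact on n-Rosser}, obtain a separating formula $\phi(x)$ for a disjoint RE pair $(A,B)$, and define the recursive witness $f(x)=\ulcorner\phi(\overline{x}^I)\urcorner$ for the condition of Theorem \ref{EI thm}. The only difference is that you spell out the recursiveness of $x\mapsto\ulcorner\phi(\overline{x}^I)\urcorner$, which the paper leaves implicit.
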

\begin{proof}\label{}
By Fact \ref{basic fact on n-Rosser}, it suffices to show that if $T$ is 1-Rosser, then $T$ is $\sf EI$.
Suppose $T$ is 1-Rosser via the interpretation $I: {\sf Num}\unlhd T$. By Theorem \ref{EI thm}, to show that $T$ is $\sf EI$, it suffices to show that for any disjoint RE pair $(A,B)$, there is a recursive function $f$ such that if $n\in A$, then $f(n)\in T_P$; and if $n\in B$, then $f(n)\in T_R$.

Let $(A,B)$ be a disjoint pair of RE sets. Then there is a formula $\phi(x)$ such that if $n\in A$, then $T\vdash \phi(\overline{n}^I)$, and if $n\in B$, then $T\vdash \neg\phi(\overline{n}^I)$.
Define $f(n)\triangleq\ulcorner\phi(\overline{n}^I)\urcorner$. Then $n\in A\Rightarrow f(n)\in T_P$ and  $n\in B\Rightarrow f(n)\in T_R$.
\end{proof}

%\begin{theorem}\label{}
%For any $n\geq 1$, if $S$ and $T$ are $n$-Rosser theories, then $S\oplus T$ is also %$n$-Rosser.
%\end{theorem}

%\begin{theorem}
%For any $n\geq 1$, $T$ is $1$-Rosser does not imply $T$ interprets $\mathbf{R}$.
%\end{theorem}

\begin{theorem}\label{}
For any $n\geq 1$, $\sf EI$ does not imply $n$-Rosser. Thus, $\sf EI$ does not imply Rosser.
\end{theorem}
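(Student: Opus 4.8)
The plan is to produce a single consistent RE theory $T$ that is $\sf EI$ but not $1$-Rosser. This is enough: by Fact~\ref{basic fact on n-Rosser}(1) every $n$-Rosser theory ($n\geq 1$) is $1$-Rosser, so a theory that fails to be $1$-Rosser fails to be $n$-Rosser for every $n\geq 1$; and since ``Rosser'' means ``$n$-Rosser for all $n$'', such a $T$ is also $\sf EI$ but not Rosser. Hence the whole statement, including the ``Thus'' clause, follows from one witness.

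To locate the feature that $\sf EI$ cannot supply, suppose $T$ were $1$-Rosser via an interpretation $I\colon {\sf Num}\unlhd T$, and for a disjoint RE pair $(A,B)$ let $\phi(x)$ be a separating formula, so that $A\subseteq P^I_\phi$ and $B\subseteq N^I_\phi$, where $P^I_\phi=\{n: T\vdash\phi(\overline{n}^I)\}$ and $N^I_\phi=\{n: T\vdash\neg\phi(\overline{n}^I)\}$ are disjoint RE sets (disjointness uses consistency of $T$). If $P^I_\phi$ were recursive it would separate $A$ from $B$ recursively, since $A\subseteq P^I_\phi$ and $B\cap P^I_\phi\subseteq N^I_\phi\cap P^I_\phi=\emptyset$. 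Therefore, if I can build an $\sf EI$ theory $T$ in which every instance set $P^I_\phi$ is recursive --- uniformly over all formulas $\phi$ and all interpretations $I$ of $\sf Num$ --- then $T$ cannot strongly separate any recursively inseparable disjoint pair of RE sets, and so is not $1$-Rosser. I would fix $(A,B)$ to be a standard effectively inseparable pair, which is a fortiori recursively inseparable.

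The crux is thus to construct an $\sf EI$ theory whose provability is decidable on substitution instances, i.e. with $\{n: T\vdash\phi(\overline{n}^I)\}$ recursive for every $\phi$, while $(T_P,T_R)$ remains effectively inseparable. The idea is to place all the inseparability-complexity on genuinely quantified sentences over fresh relation symbols coding a chosen $\sf EI$ pair, while arranging that no closed numeral instance $\phi(\overline{n}^I)$ is decided beyond what logic together with the underlying theory of numerals already forces; then each $P^I_\phi$ is recursive, yet $(T_P,T_R)$ realizes the chosen $\sf EI$ pair and is $\sf EI$ by Theorem~\ref{EI thm}. I expect the main obstacle to be precisely this double bookkeeping: certifying effective inseparability of the whole theory and simultaneously the recursiveness of every substitution-instance set, and --- most delicately --- securing the latter for every interpretation $I$ of $\sf Num$, not merely the canonical numerals. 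Working with a relational expansion of $\sf Num$ is the cleanest setting, since there Definition~\ref{def of Rosser}(5) pins $I$ down to the identity and removes the quantification over interpretations. A more abstract alternative is to prescribe the nuclei directly, realizing an $\sf EI$ pair that is not a Rosser pair (in Smullyan's sense) as $(T_P,T_R)$ and reading off the failure of single-formula separation from the failure of the pair's Rosser property.
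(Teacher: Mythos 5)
Your reduction is sound and matches the paper's: by Fact \ref{basic fact on n-Rosser}(1) it suffices to defeat $1$-Rosserness, and a consistent theory in which every instance set $\{n: T\vdash\phi(\overline{n})\}$ is recursive cannot strongly separate a recursively inseparable disjoint RE pair, so an $\sf EI$ theory with that property is the desired witness. The paper's witness is of exactly the kind you describe: $T=\mathsf{Succ}+\{\phi_n: n\in B\}+\{\neg\phi_n: n\in C\}$ over the language $\{\mathbf{0},\mathbf{S}\}$, where $\phi_n=\exists x(\mathbf{S}^n x=x)$ and $(B,C)$ is an $\sf EI$ pair; the quantified sentences $\phi_n$ carry the inseparability, and the convention of Definition \ref{def of Rosser}(5) pins the interpretation down to the identity, just as you anticipate.

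There is, however, a genuine gap: you never exhibit a concrete theory, and the two claims on which everything rests --- that the theory is $\sf EI$, and that every set $\{n: T\vdash\phi(\overline{n})\}$ is recursive --- are precisely the ones you defer as ``the main obstacle.'' The second claim is not bookkeeping and cannot simply be ``arranged''; it is a substantive theorem about the successor language. The paper obtains it from the result (Theorem 4.4 of \cite{Cheng23}) that in any consistent extension of $\mathsf{Succ}$ over $\{\mathbf{0},\mathbf{S}\}$ the weakly representable subsets of $\mathbb{N}$ are exactly the finite and cofinite sets --- from which it concludes that $1$-Rosserness would make every recursive set finite or cofinite --- and this rests on a genuine definability analysis of successor structures. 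Likewise the $\sf EI$-ness of $T$ needs the standard argument that $n\mapsto\ulcorner\phi_n\urcorner$ embeds the $\sf EI$ pair $(B,C)$ into the nuclei $(T_P,T_R)$ (Theorem 3.12 of \cite{Cheng23}). Your fallback of ``prescribing the nuclei directly'' faces the same difficulty, since one must still realize the prescribed nuclei by an actual consistent RE theory while controlling all substitution instances. As it stands the proposal is a correct plan with the decisive lemma unproved, rather than a proof.
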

\begin{proof}\label{}
By Fact \ref{basic fact on n-Rosser}, it suffices to show that $\sf EI$ does not imply 1-Rosser.
Let $\sf Succ$ be the theory over the language $\{\mathbf{0},\mathbf{S}\}$ consisting of axioms $\sf S1, \sf S2$ and $\sf S3$.
\begin{description}
  \item[$\sf S1$] $\forall x \forall y(\mathbf{S} x=\mathbf{S} y\rightarrow x=y)$;
  \item[$\sf S2$] $\forall x(\mathbf{S} x\neq \mathbf{0})$;
  \item[$\sf S3$] $\forall x(x\neq \mathbf{0}\rightarrow \exists y (x=\mathbf{S} y))$.
\end{description}
By Theorem 4.4 in \cite{Cheng23}, for any  consistent extension $S$ of $\sf Succ$ over the same language and $X\subseteq \mathbb{N}$, $X$ is weakly representable\footnote{We say $X\subseteq \mathbb{N}$ is weakly representable in $S$ if there exists a formula $\phi(x)$ such that for any $n\in \omega$, $n\in X\Leftrightarrow S\vdash \phi(\overline{n})$.} in $S$ iff $X$ is finite or co-finite.

We work in the language $\{\mathbf{0},\mathbf{S}\}$. Define the sentence $\phi_n\triangleq \exists x (\mathbf{S}^{n}x=x)$, and the  theory $T\triangleq \sf Succ + \{\phi_n: n\in B\} + \{\neg \phi_n: n\in C\}$ where $(B,C)$ is an $\sf EI$ pair.
By a standard argument, we can show that $T$ is $\sf EI$ (see Theorem 3.12 in \cite{Cheng23}).

Suppose $T$ is  1-Rosser. Then any recursive set is weakly representable in $T$.\footnote{Suppose $T$ is $1$-Rosser and $A$ is a recursive set.  Since $T$ extends {\sf Num}, by Definition \ref{def of Rosser}, the witness interpretation function for $T$'s being 1-Rosser is just the identity function. Then there is a formula $\phi(x)$ with exactly one free variable such that $\phi(x)$ strongly separates $A$ from the complement of $A$ in $T$. I.e., if $n\in A$, then $T\vdash \phi(\overline{n})$, and if $n\notin A$, then $T\vdash \neg\phi(\overline{n})$. Thus,  $n\in A\leftrightarrow T\vdash \phi(\overline{n})$. So $\phi(x)$ weakly represents $A$ in $T$.}
But any set weakly representable in $T$ is finite or co-finite. Thus, any recursive set is finite or co-finite, which leads to a contradiction. Thus, $T$ is not 1-Rosser.
\end{proof}

\section{A generalization of the Strong Double Recursion Theorem}\label{SRDT}

In \cite{Smullyan93}, Smullyan proved the Strong Double Recursion  as in Theorem \ref{original SDRT}. In this section, we propose a generalized version of $\sf SDRT$ as in Theorem \ref{SDRT}. We will apply Theorem \ref{SDRT} to generalize Theorem \ref{effect Rosser} and Theorem \ref{PST} to $n$-Rosser theories.

\begin{theorem}[The Strong Double Recursion Theorem ($\sf SDRT$),\cite{Smullyan93}]~\label{original SDRT}
For any RE relations $M_1(x, y_1, y_2, z_1,z_2)$ and  $M_2(x, y_1, y_2, z_1,z_2)$, there are recursive functions $t_1(y_1, y_2)$ and $t_2(y_1, y_2)$ such that for any $y_1, y_2\in\omega$,
\begin{enumerate}[(1)]
  \item $x\in W_{t_1(y_1, y_2)}\Leftrightarrow M_1(x, y_1, y_2, t_1(y_1, y_2), t_2(y_1, y_2))$;
  \item $x\in W_{t_2(y_1, y_2)}\Leftrightarrow M_2(x, y_1, y_2, t_1(y_1, y_2), t_2(y_1, y_2))$.
\end{enumerate}
\end{theorem}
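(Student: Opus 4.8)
The plan is to derive this double fixed-point statement from the s-m-n theorem by a parametrized double diagonalization, mirroring the standard proof of the ordinary recursion theorem but carried out simultaneously for the two indices $t_1, t_2$ and uniformly in the parameters $y_1, y_2$. Throughout I would write $\varphi_e$ for the partial recursive function with index $e$, so that $W_e = \mathrm{dom}(\varphi_e)$. The first step is to uniformize the two relations: since $M_1$ and $M_2$ are RE, for each fixed tuple $(y_1,y_2,z_1,z_2)$ the section $\{x : M_i(x,y_1,y_2,z_1,z_2)\}$ is RE uniformly in that tuple, so by the s-m-n theorem there are recursive functions $g_1, g_2$ with $W_{g_i(y_1,y_2,z_1,z_2)} = \{x : M_i(x,y_1,y_2,z_1,z_2)\}$ for $i=1,2$.

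Next comes the diagonal step. Using the s-m-n theorem once more, I would produce recursive functions $d_1(u,v,y_1,y_2)$ and $d_2(u,v,y_1,y_2)$ so that, whenever $\varphi_u(u,v,y_1,y_2)$ and $\varphi_v(u,v,y_1,y_2)$ are defined,
\[
W_{d_i(u,v,y_1,y_2)} = W_{g_i(y_1,\,y_2,\,\varphi_u(u,v,y_1,y_2),\,\varphi_v(u,v,y_1,y_2))}.
\]
Concretely, $d_i(u,v,y_1,y_2)$ is an index for the machine that on input $x$ first evaluates $a = \varphi_u(u,v,y_1,y_2)$ and $b = \varphi_v(u,v,y_1,y_2)$ and then semi-decides $M_i(x,y_1,y_2,a,b)$. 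I would then fix indices $p$ and $q$ for the total recursive functions $d_1$ and $d_2$, that is $\varphi_p(u,v,y_1,y_2) = d_1(u,v,y_1,y_2)$ and $\varphi_q(u,v,y_1,y_2) = d_2(u,v,y_1,y_2)$, and set
\[
t_1(y_1,y_2) \triangleq d_1(p,q,y_1,y_2), \qquad t_2(y_1,y_2) \triangleq d_2(p,q,y_1,y_2),
\]
which are recursive in $(y_1,y_2)$ because $p,q$ are constants.

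The self-reference then unwinds cleanly: $\varphi_p(p,q,y_1,y_2) = d_1(p,q,y_1,y_2) = t_1(y_1,y_2)$ and likewise $\varphi_q(p,q,y_1,y_2) = t_2(y_1,y_2)$, so specializing the defining equation for $d_1$ at $u=p, v=q$ gives $W_{t_1(y_1,y_2)} = W_{g_1(y_1,y_2,t_1(y_1,y_2),t_2(y_1,y_2))} = \{x : M_1(x,y_1,y_2,t_1(y_1,y_2),t_2(y_1,y_2))\}$, which is exactly equation (1); equation (2) follows symmetrically from $d_2$.

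The main obstacle, and the only genuinely delicate point, is arranging the mutual self-reference correctly: both $d_1$ and $d_2$ must be fed the \emph{same} pair of master indices $p,q$, and each $d_i$ must consult the outputs of \emph{both} diagonal functions, so that substituting $p,q$ into $\varphi_p,\varphi_q$ reproduces $t_1$ and $t_2$ at once. A secondary care point is totality: I must ensure $\varphi_p,\varphi_q$ are total — they equal the total recursive $d_1, d_2$ — so that the side conditions ``$\varphi_u(\cdots)\downarrow$, $\varphi_v(\cdots)\downarrow$'' in the diagonal step hold automatically and the displayed equalities of RE sets become unconditional.
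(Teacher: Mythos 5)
Your argument is correct and is essentially the paper's own proof (given in Appendix \ref{1st proof} for the generalized Theorem \ref{SDRT}): both derive the double fixed point directly from the s-m-n theorem by building machines that compute their own would-be indices and feed them into the semi-decision procedures for $M_1$ and $M_2$. The only difference is bookkeeping: you run two diagonal functions $d_1,d_2$ with two master indices $p,q$, whereas the paper folds everything into a single function $f(z,z_1,z_2,y_1,y_2)$ with one self-applied index $h$ and then instantiates $z$ at the indices $a,b$ of $M_1,M_2$ to obtain $t_1,t_2$.
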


\begin{theorem}\label{SDRT}
Let $M_1(\overrightarrow{x}, \overrightarrow{y_1}, \overrightarrow{y_2}, z_1,z_2)$ and  $M_2(\overrightarrow{x}, \overrightarrow{y_1}, \overrightarrow{y_2}, z_1,z_2)$ be two  $(n+2m+2)$-ary RE relations. Then there are $2m$-ary recursive functions $t_1(\overrightarrow{y_1}, \overrightarrow{y_2})$ and $t_2(\overrightarrow{y_1}, \overrightarrow{y_2})$ such that for any $\overrightarrow{y_1}, \overrightarrow{y_2}\in \mathbb{N}^m$,
\begin{enumerate}[(1)]
  \item $\overrightarrow{x}\in R^n_{t_1(\overrightarrow{y_1}, \overrightarrow{y_2})}\Leftrightarrow M_1(\overrightarrow{x}, \overrightarrow{y_1}, \overrightarrow{y_2}, t_1(\overrightarrow{y_1}, \overrightarrow{y_2}), t_2(\overrightarrow{y_1}, \overrightarrow{y_2}))$;
  \item $\overrightarrow{x}\in R^n_{t_2(\overrightarrow{y_1}, \overrightarrow{y_2})}\Leftrightarrow M_2(\overrightarrow{x}, \overrightarrow{y_1}, \overrightarrow{y_2}, t_1(\overrightarrow{y_1}, \overrightarrow{y_2}), t_2(\overrightarrow{y_1}, \overrightarrow{y_2}))$.
\end{enumerate}
\end{theorem}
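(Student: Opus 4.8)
The plan is to reduce Theorem \ref{SDRT} to the scalar Strong Double Recursion Theorem (Theorem \ref{original SDRT}) by coding each tuple as a single natural number through the iterated pairing functions $J_n$ and $J_m$, and then decoding. The only structural fact I need about the acceptable listing is its canonical compatibility with $\langle W_i\rangle$, namely $\overrightarrow{x}\in R^n_i \Leftrightarrow J_n(\overrightarrow{x})\in W_i$. I would also fix recursive component-decoding functions $\pi^n_1,\ldots,\pi^n_n$ (built by iterating $K$ and $L$) satisfying $\pi^n_i(J_n(x_1,\ldots,x_n))=x_i$, and similarly $\pi^m_1,\ldots,\pi^m_m$ for $J_m$.

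Next I would collapse the blocks $\overrightarrow{x},\overrightarrow{y_1},\overrightarrow{y_2}$ to single variables $w,u,v$. Concretely, for $k=1,2$ define the $5$-ary relations
\[
N_k(w,u,v,z_1,z_2) \Leftrightarrow M_k\bigl(\pi^n_1(w),\ldots,\pi^n_n(w),\,\pi^m_1(u),\ldots,\pi^m_m(u),\,\pi^m_1(v),\ldots,\pi^m_m(v),\,z_1,z_2\bigr).
\]
Since each $M_k$ is RE and the decoding functions are total recursive, each $N_k$ is an RE relation in the five variables $(w,u,v,z_1,z_2)$, with $u,v$ playing exactly the role of the two parameters $y_1,y_2$ in Theorem \ref{original SDRT}.

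I would then apply Theorem \ref{original SDRT} to $N_1,N_2$ to obtain recursive functions $s_1(u,v),s_2(u,v)$ with $w\in W_{s_k(u,v)}\Leftrightarrow N_k(w,u,v,s_1(u,v),s_2(u,v))$ for all $w,u,v$, and define the desired $2m$-ary functions by $t_k(\overrightarrow{y_1},\overrightarrow{y_2})\triangleq s_k(J_m(\overrightarrow{y_1}),J_m(\overrightarrow{y_2}))$, which are recursive as compositions of recursive functions. The verification is then a direct chain of equivalences. Writing $w=J_n(\overrightarrow{x})$, $u=J_m(\overrightarrow{y_1})$, $v=J_m(\overrightarrow{y_2})$, the compatibility between $R^n_i$ and $W_i$ turns $\overrightarrow{x}\in R^n_{t_k(\overrightarrow{y_1},\overrightarrow{y_2})}$ into $w\in W_{s_k(u,v)}$; the conclusion of Theorem \ref{original SDRT} turns this into $N_k(w,u,v,s_1(u,v),s_2(u,v))$; and unfolding the definition of $N_k$ (the decodings recover $\overrightarrow{x},\overrightarrow{y_1},\overrightarrow{y_2}$, and $s_k(u,v)=t_k(\overrightarrow{y_1},\overrightarrow{y_2})$) yields $M_k(\overrightarrow{x},\overrightarrow{y_1},\overrightarrow{y_2},t_1(\overrightarrow{y_1},\overrightarrow{y_2}),t_2(\overrightarrow{y_1},\overrightarrow{y_2}))$, as required.

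The step I expect to require the most care is the bookkeeping of indices rather than any genuine difficulty: the integer $t_k(\overrightarrow{y_1},\overrightarrow{y_2})$ must serve simultaneously as the subscript of $R^n$ on the left and as the value substituted into the recursion slots $z_1,z_2$ of $M_k$ on the right, and these match up only because the same integer $s_k(u,v)$ is the $W$-index delivered by Theorem \ref{original SDRT} and, under the canonical identification $\overrightarrow{x}\in R^n_i\Leftrightarrow J_n(\overrightarrow{x})\in W_i$, the index for the $R^n$ listing as well. For a non-canonical acceptable listing one simply absorbs a recursive index-translation into the definition of $t_k$; everything else is unchanged.
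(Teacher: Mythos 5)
Your proof is correct, but it takes a genuinely different route from the paper's. The paper proves Theorem \ref{SDRT} directly: it fixes indices $a,b$ of $M_1,M_2$ in the $(n+2m+2)$-ary listing, uses the s-m-n theorem to build a recursive function $g(z,z_1,z_2,\overrightarrow{y_1},\overrightarrow{y_2},s)$ with $\overrightarrow{x}\in R^n_{g(\cdots,s)}\Leftrightarrow R^{n+2m+4}_s(\overrightarrow{x},z,z_1,z_2,\overrightarrow{y_1},\overrightarrow{y_2},s)$, performs the self-referential diagonalization $f(\cdots)=g(\cdots,h)$ for a suitable index $h$, and sets $t_1=f(a,a,b,\cdot),\ t_2=f(b,a,b,\cdot)$ --- i.e.\ it reruns the standard double-recursion argument verbatim with vector parameters, exactly as advertised in the paper (``a straightforward modification of the proof of Theorem \ref{original SDRT} \ldots replacing $x,y_1,y_2$ with vectors''). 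You instead reduce the vector statement to the scalar Theorem \ref{original SDRT} by coding tuples through $J_n$ and $J_m$ and decoding inside the relations $N_k$. Your reduction is clean and the chain of equivalences checks out; what it buys is brevity and reuse of the already-stated scalar theorem, at the cost of an extra hypothesis the paper's direct proof never needs, namely the compatibility $\overrightarrow{x}\in R^n_i\Leftrightarrow J_n(\overrightarrow{x})\in W_i$ between the $n$-ary acceptable listing and $\langle W_i\rangle$. That hypothesis is harmless for acceptable listings, but your closing remark about absorbing a recursive index-translation only into $t_k$ is slightly off: since the fixed-point index occurs on \emph{both} sides of the equivalence (as the subscript of $R^n$ and as the value in the slots $z_1,z_2$), a non-canonical translation $\theta$ must also be pre-composed into the recursion slots of $N_k$ (i.e.\ define $N_k(w,u,v,z_1,z_2)\Leftrightarrow M_k(\ldots,\theta^{-1}(z_1),\theta^{-1}(z_2))$) so that the $W$-indices delivered by the scalar theorem convert back to the $R^n$-indices $t_k$ appearing in the conclusion. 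With that adjustment, your argument is a complete and valid alternative proof.
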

The proof of Theorem \ref{SDRT} is a straightforward modification of the proof of Theorem \ref{original SDRT} in \cite{Smullyan93}, replacing $x, y_1, y_2$ with vectors $\overrightarrow{x}, \overrightarrow{y_1}, \overrightarrow{y_2}$. For completeness, we include a proof of Theorem \ref{SDRT} in Appendix \ref{1st proof}. 

One referee correctly points out that Theorem \ref{original SDRT} and Theorem \ref{SDRT} are generalizations of the Double Recursion Theorem with parameters in \cite{Rogers87}. In recursion theory, even if the Double Recursion Theorem with parameters can be viewed as a natural generalization of the Recursion Theorem with parameters,  the Double Recursion Theorem with parameters indeed provides us with a powerful tool for discovering more new conclusions in applications. Similarly, even if Theorem \ref{SDRT} is an obvious generalization of Theorem \ref{original SDRT}, Theorem \ref{SDRT} is a powerful and useful tool for generalizing results about Rosser theories for RE sets to results about the hierarchy of $n$-Rosser theories. 

Theorem \ref{coro of SDRT} is a corollary of Theorem \ref{SDRT} which we will use later.

\begin{theorem}\label{coro of SDRT}
For any $3n$-ary RE relations $M_1(\overrightarrow{x}, \overrightarrow{y}, \overrightarrow{z})$ and $M_2(\overrightarrow{x}, \overrightarrow{y}, \overrightarrow{z})$, for any recursive functional $G(x,y)$ on $\mathbb{N}^2$, there are recursive $n$-ary functions $f_1(\overrightarrow{y})$ and $f_2(\overrightarrow{y})$ such that for any $\overrightarrow{y}\in \mathbb{N}^n$,
\begin{enumerate}[(1)]
  \item  $\overrightarrow{x}\in R^n_{f_1(\overrightarrow{y})} \Leftrightarrow M_1(\overrightarrow{x}, \overrightarrow{y}, G(f_1(\overrightarrow{y}), f_2(\overrightarrow{y})))$;
  \item $\overrightarrow{x}\in R^n_{f_2(\overrightarrow{y})} \Leftrightarrow M_2(\overrightarrow{x}, \overrightarrow{y}, G(f_1(\overrightarrow{y}), f_2(\overrightarrow{y})))$.
\end{enumerate}
\end{theorem}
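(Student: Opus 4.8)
The plan is to derive Theorem \ref{coro of SDRT} as a direct specialization of Theorem \ref{SDRT}, where the only real work is bookkeeping the arities so that the hypotheses of Theorem \ref{SDRT} are met. First I would observe that in Theorem \ref{SDRT} the parameter blocks $\overrightarrow{y_1}, \overrightarrow{y_2}$ are vectors of length $m$, while in Theorem \ref{coro of SDRT} we have a single parameter block $\overrightarrow{y}$ of length $n$. So I would instantiate Theorem \ref{SDRT} with $m = n$ but collapse the two parameter vectors into one: I set $\overrightarrow{y_1} = \overrightarrow{y_2} = \overrightarrow{y}$ (or, to stay formally within the statement as written, feed in a dummy second block and have the relations ignore it). The given relations $M_1, M_2$ of Theorem \ref{coro of SDRT} are $3n$-ary, matching the $(n + 2m + 2)$-ary template of Theorem \ref{SDRT} once we account for the fact that the two extra index slots $z_1, z_2$ will be supplied not directly but through the functional $G$.

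The key device is to fold the recursive functional $G(x,y)$ into the RE relations before applying Theorem \ref{SDRT}. Concretely, I would define new RE relations
\begin{align*}
\widetilde{M}_1(\overrightarrow{x}, \overrightarrow{y}, z_1, z_2) &\triangleq M_1(\overrightarrow{x}, \overrightarrow{y}, G(z_1, z_2)),\\
\widetilde{M}_2(\overrightarrow{x}, \overrightarrow{y}, z_1, z_2) &\triangleq M_2(\overrightarrow{x}, \overrightarrow{y}, G(z_1, z_2)).
\end{align*}
Since $G$ is a recursive functional (a pair of recursive functions) and $M_1, M_2$ are RE, each $\widetilde{M}_i$ is again RE: it is obtained from an RE relation by recursive substitution in the last block of arguments. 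These $\widetilde{M}_i$ have the shape $(\,n + n + 2\,)$-ary in the variables $(\overrightarrow{x}, \overrightarrow{y}, z_1, z_2)$, which is exactly the template of Theorem \ref{SDRT} with the single parameter vector $\overrightarrow{y}$ playing the role of both $\overrightarrow{y_1}$ and $\overrightarrow{y_2}$ (formally, take $m = n$ and let the relations depend on $\overrightarrow{y_1}$ only).

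Applying Theorem \ref{SDRT} to $\widetilde{M}_1, \widetilde{M}_2$ yields recursive $n$-ary functions $f_1(\overrightarrow{y})$ and $f_2(\overrightarrow{y})$ (the functions $t_1, t_2$ of that theorem, with the duplicated parameter block) satisfying, for all $\overrightarrow{y}\in\mathbb{N}^n$,
\begin{align*}
\overrightarrow{x}\in R^n_{f_1(\overrightarrow{y})} &\Leftrightarrow \widetilde{M}_1(\overrightarrow{x}, \overrightarrow{y}, f_1(\overrightarrow{y}), f_2(\overrightarrow{y})),\\
\overrightarrow{x}\in R^n_{f_2(\overrightarrow{y})} &\Leftrightarrow \widetilde{M}_2(\overrightarrow{x}, \overrightarrow{y}, f_1(\overrightarrow{y}), f_2(\overrightarrow{y})).
\end{align*}
Unfolding the definition of $\widetilde{M}_i$ replaces $(f_1(\overrightarrow{y}), f_2(\overrightarrow{y}))$ in the last two slots by $G(f_1(\overrightarrow{y}), f_2(\overrightarrow{y}))$, giving exactly the two biconditionals claimed in Theorem \ref{coro of SDRT}. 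I expect the only genuine obstacle to be the arity matching: one must check carefully that the substitution of $G$ keeps the relations RE and that collapsing $\overrightarrow{y_1}, \overrightarrow{y_2}$ into a single $\overrightarrow{y}$ is legitimate under the statement of Theorem \ref{SDRT}. Both are routine—recursive substitution preserves RE-ness, and ignoring one parameter block is harmless—so the corollary follows immediately once the reduction is set up.
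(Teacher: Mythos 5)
Your proposal is correct and follows essentially the same route as the paper: fold $G$ into the relations by defining $M_i^{\ast}(\overrightarrow{x}, \overrightarrow{y_1}, \overrightarrow{y_2}, z_1,z_2)\triangleq M_i(\overrightarrow{x}, \overrightarrow{y_i}, G(z_1,z_2))$, apply Theorem \ref{SDRT} with $m=n$, and then diagonalize by setting $f_i(\overrightarrow{y})=t_i(\overrightarrow{y},\overrightarrow{y})$. The only cosmetic difference is that the paper lets $M_1^{\ast}$ depend on $\overrightarrow{y_1}$ and $M_2^{\ast}$ on $\overrightarrow{y_2}$ while you use a single block for both, which is immaterial after the diagonalization.
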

\begin{proof}\label{}
Define $M_1^{\ast}(\overrightarrow{x}, \overrightarrow{y_1}, \overrightarrow{y_2}, z_1,z_2)\triangleq M_1(\overrightarrow{x}, \overrightarrow{y_1}, G(z_1,z_2))$
 and  $M_2^{\ast}(\overrightarrow{x}, \overrightarrow{y_1}, \overrightarrow{y_2}, z_1,z_2)\triangleq M_2(\overrightarrow{x}, \overrightarrow{y_2}, G(z_1,z_2))$.

Apply Theorem \ref{SDRT} to $M_1^{\ast}(\overrightarrow{x}, \overrightarrow{y_1}, \overrightarrow{y_2}, z_1,z_2)$ and $M_2^{\ast}(\overrightarrow{x}, \overrightarrow{y_1}, \overrightarrow{y_2}, z_1,z_2)$.  There exist $2n$-ary recursive functions  $t_1(\overrightarrow{y_1}, \overrightarrow{y_2})$ and $t_2(\overrightarrow{y_1}, \overrightarrow{y_2})$ such that:
\begin{align*}
\overrightarrow{x}\in R^n_{t_1(\overrightarrow{y_1}, \overrightarrow{y_2})} \Leftrightarrow
M_1^{\ast}(\overrightarrow{x}, \overrightarrow{y_1}, \overrightarrow{y_2}, t_1(\overrightarrow{y_1}, \overrightarrow{y_2}),t_2(\overrightarrow{y_1}, \overrightarrow{y_2}))\Leftrightarrow M_1(\overrightarrow{x}, \overrightarrow{y_1}, G(t_1(\overrightarrow{y_1}, \overrightarrow{y_2}), t_2(\overrightarrow{y_1}, \overrightarrow{y_2})))\\
\overrightarrow{x}\in R^n_{t_2(\overrightarrow{y_1}, \overrightarrow{y_2})} \Leftrightarrow
M_2^{\ast}(\overrightarrow{x}, \overrightarrow{y_1}, \overrightarrow{y_2}, t_1(\overrightarrow{y_1}, \overrightarrow{y_2}),t_2(\overrightarrow{y_1}, \overrightarrow{y_2}))\Leftrightarrow M_2(\overrightarrow{x}, \overrightarrow{y_2}, G(t_1(\overrightarrow{y_1}, \overrightarrow{y_2}), t_2(\overrightarrow{y_1}, \overrightarrow{y_2}))).
\end{align*}
Define $f_1(\overrightarrow{y})=t_1(\overrightarrow{y}, \overrightarrow{y})$ and $f_2(\overrightarrow{y})=t_2(\overrightarrow{y}, \overrightarrow{y})$. Then we have:
\begin{enumerate}[(1)]
  \item  $\overrightarrow{x}\in R^n_{f_1(\overrightarrow{y})} \Leftrightarrow M_1(\overrightarrow{x}, \overrightarrow{y}, G(f_1(\overrightarrow{y}), f_2(\overrightarrow{y})))$;
  \item $\overrightarrow{x}\in R^n_{f_2(\overrightarrow{y})} \Leftrightarrow M_2(\overrightarrow{x}, \overrightarrow{y}, G(f_1(\overrightarrow{y}), f_2(\overrightarrow{y})))$.
\end{enumerate}
\end{proof}

\section{Generalizations of Theorem \ref{effect Rosser} to $n$-Rosser theories}\label{generalization of theorem on rosser}

In this section, we use the generalized Strong Double Recursion Theorem \ref{SDRT} to generalize Theorem \ref{effect Rosser} to $n$-Rosser theories.  Especially, we prove that the following notions are equivalent: Rosser, Effectively Rosser, Exact Rosser, Effectively exact Rosser.

We first show that ``effectively $n$-Rosser" is equivalent with ``effectively exact $n$-Rosser". Before proving Theorem \ref{effective exact}, we first prove a lemma as follows. 

\begin{lemma}\label{RT1}
For any 2-ary recursive function $f(x,y)$, there exist recursive functions $t_1(x,y)$ and $t_2(x,y)$ such that for any $i,j\in \omega$ and $\overrightarrow{a}\in \mathbb{N}^n$, we have:
\begin{enumerate}[(1)]
  \item $R^n_{t_1(i,j)}(\overrightarrow{a})$ iff $R^{n+1}_{i}(\overrightarrow{a}, f(t_1(i,j), t_2(i,j)))$.
  \item $R^n_{t_2(i,j)}(\overrightarrow{a})$ iff $R^{n+1}_{j}(\overrightarrow{a}, f(t_1(i,j), t_2(i,j)))$.
\end{enumerate}
\end{lemma}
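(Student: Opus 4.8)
The plan is to recognize Lemma \ref{RT1} as a direct instance of the generalized Strong Double Recursion Theorem \ref{SDRT}, with the two index parameters $i,j$ playing the role of the parameter vectors $\overrightarrow{y_1},\overrightarrow{y_2}$ (each of length $m=1$) and $\overrightarrow{a}$ playing the role of $\overrightarrow{x}$ (of length $n$). The only thing blocking a naive application is that the right-hand sides of clauses (1) and (2) mention the relations $R^{n+1}_i$ and $R^{n+1}_j$, whose \emph{indices} vary, whereas Theorem \ref{SDRT} demands two \emph{fixed} RE relations $M_1,M_2$. The standard device to get around this is to absorb the varying index into the relation itself by invoking the universal RE relation supplied by the acceptable listing.

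Concretely, first I would use that $\langle R^{n+1}_0, R^{n+1}_1, \cdots\rangle$ is an acceptable listing, so that the relation $U^{n+1}(e,\overrightarrow{a},b)$ defined by $U^{n+1}(e,\overrightarrow{a},b)\Leftrightarrow R^{n+1}_e(\overrightarrow{a},b)$ is itself RE (uniformly in the index $e$). Then I would set
\[
M_1(\overrightarrow{x}, y_1, y_2, z_1, z_2)\triangleq U^{n+1}(y_1, \overrightarrow{x}, f(z_1,z_2)),\qquad
M_2(\overrightarrow{x}, y_1, y_2, z_1, z_2)\triangleq U^{n+1}(y_2, \overrightarrow{x}, f(z_1,z_2)).
\]
These are $(n+4)$-ary relations, matching the shape $(n+2m+2)$ of Theorem \ref{SDRT} with $m=1$. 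The one genuine verification is that $M_1$ and $M_2$ are RE: since $f$ is recursive and $U^{n+1}$ is RE, each $M_i$ is obtained by composing a total recursive operation (computing $w=f(z_1,z_2)$ and rearranging the arguments) with the RE relation $U^{n+1}$, hence is RE.

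With this setup, I would apply Theorem \ref{SDRT} to $M_1$ and $M_2$, obtaining $2$-ary recursive functions $t_1(y_1,y_2)$ and $t_2(y_1,y_2)$ satisfying clauses (1) and (2) of that theorem. Unwinding the definitions of $M_1,M_2$ and of $U^{n+1}$, and renaming $y_1,y_2$ to $i,j$, then gives, for all $i,j\in\omega$ and $\overrightarrow{a}\in\mathbb{N}^n$, that $R^n_{t_1(i,j)}(\overrightarrow{a})\Leftrightarrow R^{n+1}_i(\overrightarrow{a}, f(t_1(i,j),t_2(i,j)))$ and $R^n_{t_2(i,j)}(\overrightarrow{a})\Leftrightarrow R^{n+1}_j(\overrightarrow{a}, f(t_1(i,j),t_2(i,j)))$, which is exactly the statement of the lemma.

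I expect no substantive obstacle here: the argument is essentially bookkeeping, and the only points requiring care are confirming the arity count (the relations are $(n+2m+2)$-ary with $m=1$) and the uniform RE-ness of $M_1,M_2$, which is precisely what the universal RE relation guarantees. I would also note why I appeal to Theorem \ref{SDRT} rather than its corollary: in Theorem \ref{coro of SDRT} the parameter vector $\overrightarrow{y}$ is required to have the same length $n$ as $\overrightarrow{x}$, whereas the lemma uses only the two parameters $i,j$, so the corollary does not apply directly and the cleaner route is through Theorem \ref{SDRT} itself with $m=1$.
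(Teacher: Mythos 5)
Your proposal is correct and is essentially identical to the paper's proof: the paper also defines $M_1(\overrightarrow{x}, y_1,y_2,z_1,z_2)\triangleq R^{n+1}_{y_1}(\overrightarrow{x}, f(z_1,z_2))$ and $M_2(\overrightarrow{x}, y_1,y_2,z_1,z_2)\triangleq R^{n+1}_{y_2}(\overrightarrow{x}, f(z_1,z_2))$ and applies Theorem \ref{SDRT} with $m=1$. Your additional remarks on the uniform RE-ness of these relations via the universal relation of the acceptable listing, and on why Theorem \ref{coro of SDRT} does not apply directly, are accurate clarifications of points the paper leaves implicit.
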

\begin{proof}\label{}
Define \[M_1(\overrightarrow{x}, y_1,y_2,z_1,z_2)\triangleq R^{n+1}_{y_1}(\overrightarrow{x}, f(z_1,z_2))\] and \[M_2(\overrightarrow{x}, y_1,y_2,z_1,z_2)\triangleq R^{n+1}_{y_2}(\overrightarrow{x}, f(z_1,z_2)).\]
Apply Theorem \ref{SDRT} to $M_1(\overrightarrow{x}, y_1,y_2,z_1,z_2)$ and $M_2(\overrightarrow{x}, y_1,y_2,z_1,z_2)$. Then there are recursive functions $t_1(x,y)$ and $t_2(x,y)$ such that for any $i,j\in \omega$ and $\overrightarrow{a}\in \mathbb{N}^n$, we have:
\begin{align*}
R^n_{t_1(i,j)}(\overrightarrow{a})\Leftrightarrow M_1(\overrightarrow{a}, i,j, t_1(i,j),t_2(i,j))\Leftrightarrow R^{n+1}_{i}(\overrightarrow{a}, f(t_1(i,j), t_2(i,j))); \\
R^n_{t_2(i,j)}(\overrightarrow{a})\Leftrightarrow M_2(\overrightarrow{a}, i,j, t_1(i,j),t_2(i,j))\Leftrightarrow R^{n+1}_{j}(\overrightarrow{a}, f(t_1(i,j), t_2(i,j))).
\end{align*}
\end{proof}

\begin{theorem}\label{effective exact}
If $T$ is effectively $n$-Rosser, then $T$ is effectively exact $n$-Rosser.
\end{theorem}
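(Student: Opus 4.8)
The plan is to manufacture an exactly-separating formula out of the strongly-separating formulas supplied by the effective Rosser function, by a self-referential construction justified by Lemma \ref{RT1}. Let $I$ and $\rho(i,j)$ witness that $T$ is effectively $n$-Rosser, so that for every pair $(R^n_i, R^n_j)$ the formula $E_{\rho(i,j)}$ strongly separates $R^n_i - R^n_j$ from $R^n_j - R^n_i$ w.r.t. $I$. For a code $e$ write $A_e = \{\overrightarrow{a} : T \vdash E_e(\overline{a_1}^I, \ldots, \overline{a_n}^I)\}$ and $B_e = \{\overrightarrow{a} : T \vdash \neg E_e(\overline{a_1}^I, \ldots, \overline{a_n}^I)\}$. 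Since $T$ is RE and substitution of $I$-numerals is recursive, these are uniformly RE in $e$; hence by s-m-n there are recursive functions $\alpha, \beta$ with $A_{\rho(u,v)} = R^n_{\alpha(u,v)}$ and $B_{\rho(u,v)} = R^n_{\beta(u,v)}$. Consistency of $T$ gives $A_e \cap B_e = \emptyset$ for every $e$.

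Next I would set up the double recursion. Fix input disjoint relations $R^n_i, R^n_j$; I want recursive $u = u(i,j)$ and $v = v(i,j)$ solving the fixed-point equations $R^n_u = R^n_i \cup B_{\rho(u,v)}$ and $R^n_v = R^n_j \cup A_{\rho(u,v)}$. To obtain these from Lemma \ref{RT1}, put $f(x,y) = J_2(\alpha(x,y), \beta(x,y))$ and, using s-m-n, choose recursive indices $e_1(i,j), e_2(i,j)$ of the $(n+1)$-ary RE relations $P(\overrightarrow{a}, c) \Leftrightarrow R^n_i(\overrightarrow{a}) \vee R^n_{L(c)}(\overrightarrow{a})$ and $Q(\overrightarrow{a}, c) \Leftrightarrow R^n_j(\overrightarrow{a}) \vee R^n_{K(c)}(\overrightarrow{a})$. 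Applying Lemma \ref{RT1} to this $f$ and instantiating its parameters at $e_1(i,j), e_2(i,j)$ yields recursive $u = t_1(e_1,e_2)$ and $v = t_2(e_1,e_2)$ with $R^n_u(\overrightarrow{a}) \Leftrightarrow P(\overrightarrow{a}, f(u,v))$ and $R^n_v(\overrightarrow{a}) \Leftrightarrow Q(\overrightarrow{a}, f(u,v))$; since $K(f(u,v)) = \alpha(u,v)$ and $L(f(u,v)) = \beta(u,v)$, this is precisely the pair of fixed-point equations above.

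The crux is then a purely set-theoretic verification that this fixed point forces exactness. Write $A = A_{\rho(u,v)}$ and $B = B_{\rho(u,v)}$. From the fixed-point equations, the disjointness $R^n_i \cap R^n_j = \emptyset$, and $A \cap B = \emptyset$, one computes $R^n_u - R^n_v = (R^n_i \cap \overline{A}) \cup (B \cap \overline{R^n_j})$. The Rosser inclusion $R^n_u - R^n_v \subseteq A$ then forces $R^n_i \cap \overline{A} = \emptyset$ (a set contained in both $A$ and $\overline{A}$) and $B \cap \overline{R^n_j} = \emptyset$ (a set contained in the disjoint sets $A$ and $B$), i.e. $R^n_i \subseteq A$ and $B \subseteq R^n_j$. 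The symmetric computation applied to $R^n_v - R^n_u \subseteq B$ gives $R^n_j \subseteq B$ and $A \subseteq R^n_i$, whence $A = R^n_i$ and $B = R^n_j$. Therefore $E_{\rho(u,v)}$ exactly separates $R^n_i$ from $R^n_j$ w.r.t. $I$, and $\sigma(i,j) = \rho(u(i,j), v(i,j))$ is the required recursive function, with the same interpretation $I$ used throughout.

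I expect the main obstacle to be organizing the self-reference so that $u, v$ come out recursive in $i,j$ — which is exactly what Lemma \ref{RT1} is tailored to deliver — together with getting the set-algebra bookkeeping right, since the conclusion depends on combining both Rosser inclusions with consistency. One subtle point worth flagging in the write-up: at the solution one has $R^n_u = R^n_v = R^n_i \cup R^n_j$, so both difference sets are in fact empty; nevertheless the inclusions are genuinely what pin down $A$ and $B$, because they are applied to the as-yet-undetermined sets $A, \overline{A}, B$ before the values of $A, B$ are known, and it is only after they force $A = R^n_i$, $B = R^n_j$ that the difference sets collapse.
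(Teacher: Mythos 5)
Your proposal is correct and is essentially the paper's own proof: both use Lemma \ref{RT1} to produce self-referential indices $u,v$ satisfying the same fixed-point equations $R^n_u=R^n_i\cup B_{\rho(u,v)}$ and $R^n_v=R^n_j\cup A_{\rho(u,v)}$ (the paper routes the provability conditions directly into $(n+1)$-ary relations via $s_1,s_2$ rather than through $\alpha,\beta$ and the pairing function, but the resulting equations are identical), and then conclude $A_{\rho(u,v)}=R^n_i$, $B_{\rho(u,v)}=R^n_j$ from the two Rosser inclusions together with consistency. Your verification is phrased as set algebra where the paper argues pointwise by cases, but the content is the same.
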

\begin{proof}\label{}
Suppose $T$ is effectively $n$-Rosser under a recursive function $f(i,j)$ and an interpretation $I: {\sf Num}\unlhd T$. Apply Lemma \ref{RT1} to the recursive function $f(i,j)$. Take recursive functions $t_1(x,y)$ and $t_2(x,y)$ as in Lemma \ref{RT1}. Define $h(i,j)=f(t_1(i,j), t_2(i,j))$.

\begin{claim}
For any two $(n+1)$-ary relations $R^{n+1}_i$ and $R^{n+1}_j$, $h(i,j)$ codes a formula with $n$-free variables which strongly separates $\{\overrightarrow{a}\in \mathbb{N}^n: (\overrightarrow{a}, h(i,j))\in R^{n+1}_i-R^{n+1}_j\}$ from $\{\overrightarrow{a}\in \mathbb{N}^n: (\overrightarrow{a}, h(i,j))\in R^{n+1}_j- R^{n+1}_i\}$ in $T$ w.r.t. $I$.
\end{claim}
\begin{proof}\label{}

Suppose  $R^{n+1}_i$ and $R^{n+1}_j$ are two $(n+1)$-ary relations. Note that  $h(i,j)$ codes a formula $\phi(x_1, \cdots, x_n)$ with $n$-free variables which strongly separates $R^n_{t_1(i,j)}-R^n_{t_2(i,j)}$ from $R^n_{t_2(i,j)}-R^n_{t_1(i,j)}$ in $T$ w.r.t. $I$.

We show that $\phi(x_1, \cdots, x_n)$ strongly separates
$\{\overrightarrow{a}\in \mathbb{N}^n: (\overrightarrow{a}, h(i,j))\in R^{n+1}_i-R^{n+1}_j\}$ from $\{\overrightarrow{a}\in \mathbb{N}^n: (\overrightarrow{a}, h(i,j))\in R^{n+1}_j- R^{n+1}_i\}$ in $T$ w.r.t. $I$.

Suppose $(\overrightarrow{a}, h(i,j))\in R^{n+1}_i- R^{n+1}_j$ where $\overrightarrow{a}=(a_1, \cdots, a_n)$. Then by Lemma \ref{RT1}, 
$\overrightarrow{a}\in R^n_{t_1(i,j)}-R^n_{t_2(i,j)}$. Then $T\vdash \phi(\overline{a_1}^I, \cdots, \overline{a_n}^I)$. By a similar argument, we have if $(\overrightarrow{a}, h(i,j))\in R^{n+1}_j-R^{n+1}_i$, then $T\vdash \neg\phi(\overline{a_1}^I, \cdots, \overline{a_n}^I)$.
\end{proof}

Note that there exist recursive functions $s_1(x)$ and $s_2(x)$ such that for any $i, b\in \mathbb{N}$ and $\overrightarrow{a}=(a_1, \cdots, a_n)\in \mathbb{N}^n$, we have
\begin{enumerate}[(1)]
  \item  $R^{n+1}_{s_1(i)}(\overrightarrow{a}, b)\Leftrightarrow [R^{n}_{i}(\overrightarrow{a}) \vee T\vdash \neg E_b(\overline{a_1}^I, \cdots, \overline{a_n}^I)]$
  \item $R^{n+1}_{s_2(j)}(\overrightarrow{a}, b) \Leftrightarrow [R^{n}_{j}(\overrightarrow{a}) \vee T\vdash E_b(\overline{a_1}^I, \cdots, \overline{a_n}^I)]$
\end{enumerate}
Define $g(i,j)=h(s_1(i), s_2(j))$. Let $R^n_i$ and $R^n_j$ be two disjoint $n$-ary RE relations. We show that $g(i,j)$ exactly separates $R^n_i$ and $R^n_j$ in $T$ w.r.t. $I$.

Note that $g(i,j)=h(s_1(i), s_2(j))$ codes a formula
$\phi(x_1, \cdots, x_n)$ with $n$-free variables which strongly separates
$\{\overrightarrow{a}\in \mathbb{N}^n: (\overrightarrow{a}, g(i,j))\in R^{n+1}_{s_1(i)}- R^{n+1}_{s_2(j)}\}$ from $\{\overrightarrow{a}\in \mathbb{N}^n: (\overrightarrow{a}, g(i,j))\in R^{n+1}_{s_2(j)}- R^{n+1}_{s_1(i)}\}$ in $T$ w.r.t. $I$.
Note that $E_{g(i,j)}(x_1, \cdots, x_n)$ is $\phi(x_1, \cdots, x_n)$.
For any $\overrightarrow{a}=(a_1, \cdots, a_n)\in \mathbb{N}^n$, we have:
\begin{enumerate}[(1)]
  \item if $[R^{n}_{i}(\overrightarrow{a}) \vee T\vdash \neg \phi(\overline{a_1}^I, \cdots, \overline{a_n}^I)] \wedge \neg [R^{n}_{j}(\overrightarrow{a}) \vee T\vdash \phi(\overline{a_1}^I, \cdots, \overline{a_n}^I)]$, then $T\vdash \phi(\overline{a_1}^I, \cdots, \overline{a_n}^I)$.
  \item if $[R^{n}_{j}(\overrightarrow{a}) \vee T\vdash \phi(\overline{a_1}^I, \cdots, \overline{a_n}^I)]\wedge\neg [R^{n}_{i}(\overrightarrow{a}) \vee T\vdash \neg \phi(\overline{a_1}^I, \cdots, \overline{a_n}^I)]$, then $T\vdash \neg\phi(\overline{a_1}^I, \cdots, \overline{a_n}^I)$.
\end{enumerate}

Now we show that for any $\overrightarrow{a}=(a_1, \cdots, a_n)$,
$R^n_i(\overrightarrow{a})\Leftrightarrow T\vdash \phi(\overline{a_1}^I, \cdots, \overline{a_n}^I)$ and $R^n_j(\overrightarrow{a})\Leftrightarrow T\vdash \neg\phi(\overline{a_1}^I, \cdots, \overline{a_n}^I)$.

We only show that $R^n_j(\overrightarrow{a})\Leftrightarrow T\vdash \neg\phi(\overline{a_1}^I, \cdots, \overline{a_n}^I)$. By a similar argument, we can show that  $R^n_i(\overrightarrow{a})\Leftrightarrow T\vdash \phi(\overline{a_1}^I, \cdots, \overline{a_n}^I)$.

Suppose $R^n_j(\overrightarrow{a})$ holds. We show that $T\vdash \neg\phi(\overline{a_1}^I, \cdots, \overline{a_n}^I)$. Suppose not, i.e., $T\nvdash \neg\phi(\overline{a_1}^I, \cdots, \overline{a_n}^I)$. Then $[R^{n}_{j}(\overrightarrow{a}) \vee T\vdash \phi(\overline{a_1}^I, \cdots, \overline{a_n}^I)]\wedge\neg [R^{n}_{i}(\overrightarrow{a}) \vee T\vdash \neg \phi(\overline{a_1}^I, \cdots, \overline{a_n}^I)]$ holds. By (2), we have $T\vdash \neg\phi(\overline{a_1}^I, \cdots, \overline{a_n}^I)$, which leads to a contradiction. Thus, $T\vdash \neg\phi(\overline{a_1}^I, \cdots, \overline{a_n}^I)$.

Suppose $T\vdash \neg\phi(\overline{a_1}^I, \cdots, \overline{a_n}^I)$. We show that $R^n_j(\overrightarrow{a})$ holds. Suppose not, i.e., $\neg R^n_j(\overrightarrow{a})$ holds. Then $[R^{n}_{i}(\overrightarrow{a}) \vee T\vdash \neg \phi(\overline{a_1}^I, \cdots, \overline{a_n}^I)] \wedge \neg [R^{n}_{j}(\overrightarrow{a}) \vee T\vdash \phi(\overline{a_1}^I, \cdots, \overline{a_n}^I)]$ holds. By (1), we have $T\vdash \phi(\overline{a_1}^I, \cdots, \overline{a_n}^I)$, which contradicts that $T$ is consistent. Thus, $R^n_j(\overrightarrow{a})$ holds.

Thus, $g(i,j)$ is the code of  the formula $\phi(x_1, \cdots, x_n)$ which exactly separates $R^n_i$ and $R^n_j$ in $T$ w.r.t. $I$. Hence, $T$ is effectively exact $n$-Rosser under $g(i,j)$ and $I$.
\end{proof}

\begin{corollary}\label{effective versus effective exact}
Let $T$ be a consistent RE theory. Then for any $n\geq 1$,  $T$ is effectively $n$-Rosser if and only if $T$ is effectively exact $n$-Rosser.
\end{corollary}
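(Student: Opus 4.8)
The plan is to prove the corollary by combining Theorem~\ref{effective exact} with part~(4) of Fact~\ref{basic fact on n-Rosser}. The statement is a biconditional, so I would establish each direction separately. The forward direction, that effectively $n$-Rosser implies effectively exact $n$-Rosser, is precisely the content of Theorem~\ref{effective exact}, which has just been proved; so nothing further is required there and I would simply cite it. The reverse direction, that effectively exact $n$-Rosser implies effectively $n$-Rosser, is exactly part~(4) of Fact~\ref{basic fact on n-Rosser}, whose proof was already noted to be trivial.

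For completeness I would spell out the easy reverse implication rather than leave it to the reader. Suppose $T$ is effectively exact $n$-Rosser, witnessed by an interpretation $I:{\sf Num}\unlhd T$ and a recursive function $f(i,j)$. Recall from the remark following the Separation Lemma that the effectively $n$-Rosser condition may be stated in the equivalent form where the relations $R^n_i$ and $R^n_j$ are assumed disjoint. Thus, given any disjoint pair of $n$-ary RE relations $R^n_i$ and $R^n_j$, the value $f(i,j)$ already codes a formula $\phi(x_1,\dots,x_n)$ that \emph{exactly} separates $R^n_i$ from $R^n_j$ in $T$ w.r.t.\ $I$. Since exact separation gives in particular that $(a_1,\dots,a_n)\in R^n_i\Rightarrow T\vdash\phi(\overline{a_1}^I,\dots,\overline{a_n}^I)$ and $(a_1,\dots,a_n)\in R^n_j\Rightarrow T\vdash\neg\phi(\overline{a_1}^I,\dots,\overline{a_n}^I)$, the same $\phi$ strongly separates $R^n_i-R^n_j$ from $R^n_j-R^n_i$. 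Hence the same recursive function $f$ and interpretation $I$ witness that $T$ is effectively $n$-Rosser.

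There is no genuine obstacle here, since the corollary is a formal packaging of two results already in hand; the only point requiring care is the reduction in the reverse direction to the disjoint-pair formulation, so that the witnessing function $f(i,j)$ supplied by exactness can be read off directly without further manipulation. Combining the two directions yields the stated equivalence for every $n\geq 1$.
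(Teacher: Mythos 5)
Your proposal is correct and follows essentially the same route as the paper: the forward direction is exactly Theorem \ref{effective exact}, and the reverse direction is the trivial implication recorded as part (4) together with part (3)-style reasoning in Fact \ref{basic fact on n-Rosser}, which you merely spell out via the disjoint-pair reformulation noted after the Separation Lemma. The paper's own proof says only ``Follows from Theorem \ref{effective exact}'', leaving the easy converse implicit, so your version is the same argument with the details filled in.
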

\begin{proof}\label{}
Follows from Theorem \ref{effective exact}.
\end{proof}

\begin{theorem}\label{rosser imply effec rosser}
For any $n\geq 1$, if $T$ is $(n+1)$-Rosser, then $T$ is effectively $n$-Rosser.
\end{theorem}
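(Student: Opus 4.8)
The plan is to trade one arity for the two parameters $i,j$: since $T$ is assumed $(n+1)$-Rosser, I will pack the index pair $(i,j)$ into the extra $(n+1)$-st coordinate via the pairing function $J_2$, so that a \emph{single} separating formula supplied by the $(n+1)$-Rosser hypothesis simultaneously handles the whole two-parameter family of $n$-ary pairs. Fix the interpretation $I:{\sf Num}\unlhd T$ witnessing that $T$ is $(n+1)$-Rosser; this same $I$ will witness that $T$ is effectively $n$-Rosser.

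First I would define two $(n+1)$-ary RE relations $S^{n+1}_1$ and $S^{n+1}_2$ by
\[
(\overrightarrow{a},c)\in S^{n+1}_1 \Leftrightarrow \overrightarrow{a}\in R^n_{K(c)},
\qquad
(\overrightarrow{a},c)\in S^{n+1}_2 \Leftrightarrow \overrightarrow{a}\in R^n_{L(c)}.
\]
These are RE because the listing $\langle R^n_i\rangle$ is acceptable (hence there is a universal $(n+1)$-ary RE relation $U^n(\overrightarrow{a},e)\Leftrightarrow \overrightarrow{a}\in R^n_e$) and $K,L$ are recursive. Applying the $(n+1)$-Rosser property to the pair $(S^{n+1}_1,S^{n+1}_2)$ yields one formula $\psi(x_1,\dots,x_{n+1})$ that strongly separates $S^{n+1}_1-S^{n+1}_2$ from $S^{n+1}_2-S^{n+1}_1$ in $T$ w.r.t. $I$. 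Now set $c=J_2(i,j)$, so that $K(c)=i$ and $L(c)=j$; then the $c$-slice of $S^{n+1}_1$ is exactly $R^n_i$ and the $c$-slice of $S^{n+1}_2$ is exactly $R^n_j$. Consequently $(\overrightarrow{a},J_2(i,j))\in S^{n+1}_1-S^{n+1}_2\Leftrightarrow\overrightarrow{a}\in R^n_i-R^n_j$, and symmetrically $(\overrightarrow{a},J_2(i,j))\in S^{n+1}_2-S^{n+1}_1\Leftrightarrow\overrightarrow{a}\in R^n_j-R^n_i$. Hence the $n$-variable formula $\phi_{i,j}(x_1,\dots,x_n):=\psi(x_1,\dots,x_n,\overline{J_2(i,j)}^{\,I})$ strongly separates $R^n_i-R^n_j$ from $R^n_j-R^n_i$ in $T$ w.r.t. $I$.

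It remains to set $f(i,j)$ to be the code of $\phi_{i,j}$ and to verify that $f$ is recursive, and here lies the only genuine subtlety—one that is conceptual rather than computational. The formula $\psi$ is produced \emph{non-effectively} by the $(n+1)$-Rosser hypothesis, so at first glance nothing is uniform in $(i,j)$. The key observation is that $\psi$ is a \emph{single fixed} formula, so $\ulcorner\psi\urcorner$ is a fixed constant, and $\phi_{i,j}$ arises from $\psi$ by the purely syntactic operation of substituting the interpreted numeral $\overline{J_2(i,j)}^{\,I}$ (whose code is recursive in $J_2(i,j)$, since $I$ is fixed) for the last variable. Thus $f(i,j)=\mathrm{Sub}\bigl(\ulcorner\psi\urcorner,\overline{J_2(i,j)}^{\,I}\bigr)$ is recursive in $(i,j)$, and by the preceding paragraph $f$ together with $I$ witnesses that $T$ is effectively $n$-Rosser. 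I would remark that, in contrast with Theorem~\ref{effective exact}, this argument needs no appeal to the Strong Double Recursion Theorem~\ref{SDRT}: the extra coordinate already furnishes the uniform parameter, and effectiveness is obtained for free from substitution into one fixed formula, exactly as in the classical passage from Rosser for binary relations to effective Rosser for RE sets.
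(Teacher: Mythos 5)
Your proposal is correct and follows essentially the same route as the paper: the paper likewise defines $M^{n+1}_1(\overrightarrow{a},y)\Leftrightarrow \overrightarrow{a}\in R^n_{Ky}$ and $M^{n+1}_2(\overrightarrow{a},y)\Leftrightarrow \overrightarrow{a}\in R^n_{Ly}$, applies the $(n+1)$-Rosser hypothesis to obtain a single formula $\phi$, and sets $h(i,j)=\ulcorner\phi(x_1,\dots,x_n,\overline{J(i,j)}^I)\urcorner$, with recursiveness coming from substitution into the one fixed formula. Your added remarks (why the slice relations are RE, and why no appeal to the Strong Double Recursion Theorem is needed) are accurate and consistent with the paper's argument.
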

\begin{proof}\label{}
Suppose $T$ is $(n+1)$-Rosser via the interpretation $I: {\sf Num}\unlhd T$. Define $(n+1)$-ary relations $M^{n+1}_1$ and $M^{n+1}_2$ as: $M^{n+1}_1(a_1, \cdots, a_n, y)\Leftrightarrow (a_1, \cdots, a_n)\in R^n_{Ky}$ and $M^{n+1}_2(a_1, \cdots, a_n, y)\Leftrightarrow (a_1, \cdots, a_n)\in R^n_{Ly}$.
Since $T$ is $(n+1)$-Rosser via $I$, there is a formula $\phi(x_1, \cdots, x_{n+1})$ with $(n+1)$-free variables such that $\phi(x_1, \cdots, x_{n+1})$ strongly separates
$M^{n+1}_1-M^{n+1}_2$ from $M^{n+1}_2-M^{n+1}_1$ in $T$ w.r.t. $I$.

Define $h(i,j)=\ulcorner \phi(x_1, \cdots, x_n, \overline{J(i,j)}^I)\urcorner$. Note that $h$ is recursive. We show that $T$ is effectively $n$-Rosser via $h$ and $I$. Suppose $R_i^n$ and $R_j^n$ are two $n$-ary RE relations. Let $\psi(x_1, \cdots, x_n)\triangleq \phi(x_1, \cdots, x_n, \overline{J(i,j)}^I)$. We show that $\psi(x_1, \cdots, x_n)$ strongly separates $R_i^n-R_j^n$ from $R_j^n-R_i^n$ in $T$ w.r.t. $I$.

Suppose $(a_1, \cdots, a_n)\in R_i^n-R_j^n$. Since $(a_1, \cdots, a_n)\in R^n_{K(J(i,j))}- R^n_{L(J(i,j))}$, we have $T\vdash \phi(\overline{a_1}^I, \cdots, \overline{a_n}^I, \overline{J(i,j)}^I)$, that is $T\vdash\psi(\overline{a_1}^I, \cdots, \overline{a_n}^I)$. By a similar argument, if $(a_1, \cdots, a_n)\in R_j^n-R_i^n$, then $T\vdash\neg\psi(\overline{a_1}^I, \cdots, \overline{a_n}^I)$.
\end{proof}

As a corollary, we have a theory $S$ is Rosser if and only if $S$ is effectively Rosser.

\begin{corollary}\label{}
Let $T$ be a consistent RE theory. Then for any $n\geq 1$, if $T$ is $(n + 1)$-Rosser,
then $T$ is effectively exact $n$-Rosser.
\end{corollary}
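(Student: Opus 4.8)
The final statement is the corollary:

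\begin{proof}
The plan is to chain together two results already established in the excerpt. The corollary asserts that, for any $n \geq 1$, if $T$ is $(n+1)$-Rosser then $T$ is effectively exact $n$-Rosser. First I would invoke Theorem \ref{rosser imply effec rosser}, which gives us directly that if $T$ is $(n+1)$-Rosser, then $T$ is effectively $n$-Rosser. This disposes of the genuine content: the passage from the non-effective hypothesis at level $n+1$ to an effective witness at level $n$ is exactly what Theorem \ref{rosser imply effec rosser} supplies, via the coding trick of packing the pair $(i,j)$ into the single extra argument $\overline{J(i,j)}^I$ and reading off a recursive $h(i,j)$.

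Having obtained that $T$ is effectively $n$-Rosser, I would then apply Theorem \ref{effective exact} (equivalently, one direction of Corollary \ref{effective versus effective exact}), which states that effectively $n$-Rosser implies effectively exact $n$-Rosser. Composing the two implications yields effectively exact $n$-Rosser, as desired. Explicitly: $(n+1)\text{-Rosser} \Rightarrow \text{effectively } n\text{-Rosser} \Rightarrow \text{effectively exact } n\text{-Rosser}$.

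There is essentially no obstacle here, since both component implications are proved in full earlier in the section; the corollary is purely a transitivity observation combining Theorem \ref{rosser imply effec rosser} and Theorem \ref{effective exact}. If any care is needed, it is only to note that the same interpretation $I: {\sf Num} \unlhd T$ witnessing $(n+1)$-Rosserness is the one carried through Theorem \ref{rosser imply effec rosser} to witness effective $n$-Rosserness, and that Theorem \ref{effective exact} preserves this interpretation when producing the effectively exact witness $g(i,j)$. Thus the result follows immediately from Theorem \ref{rosser imply effec rosser} and Theorem \ref{effective exact}.
\end{proof}
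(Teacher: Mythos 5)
Your proof is correct and matches the paper's own argument exactly: the paper likewise derives the corollary by composing Theorem \ref{rosser imply effec rosser} with Theorem \ref{effective versus effective exact} (whose content is Theorem \ref{effective exact}). Nothing further is needed.
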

\begin{proof}\label{}
Follows from Theorem \ref{effective versus effective exact} and Theorem \ref{rosser imply effec rosser}.
\end{proof}

As corollaries, we have: (1) for any $n\geq 1$, if $T$ is exact $(n+1)$-Rosser, then $T$ is effectively exact $n$-Rosser; (2) a theory $T$ is exact Rosser if and only if $T$ is effectively exact Rosser; (3) if $T$ is $(n+1)$-Rosser, then $T$ is exact $n$-Rosser; (4) a theory $T$ is Rosser if and only if $T$ is exact Rosser. 

In summary, we have the following theorem. 
\begin{theorem}\label{equivalent thm}
The following notions are equivalent:
\begin{enumerate}[(1)]
  \item  Rosser;
  \item Effectively Rosser;
  \item  Exact Rosser;
  \item Effectively exact Rosser.
\end{enumerate}
\end{theorem}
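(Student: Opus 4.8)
The plan is to prove Theorem~\ref{equivalent thm} by chaining together the results already established in this section, exploiting the fact that each of the four global notions is defined levelwise (``for all $n$'') in Definition~\ref{general def}. The key observation is that the implications proved in this section all shift the index $n$ by at most one, so iterating them over all $n$ closes the loop. I would organize the argument as a cycle of implications: Rosser $\Rightarrow$ Effectively Rosser $\Rightarrow$ Effectively exact Rosser $\Rightarrow$ Exact Rosser $\Rightarrow$ Rosser, together with the trivial forward implications from Fact~\ref{basic fact on n-Rosser}.

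First I would establish Rosser $\Rightarrow$ Effectively Rosser. Suppose $T$ is Rosser, i.e.\ $T$ is $n$-Rosser for every $n\geq 1$. Fix $n$; then in particular $T$ is $(n+1)$-Rosser, so by Theorem~\ref{rosser imply effec rosser} $T$ is effectively $n$-Rosser. Since $n$ was arbitrary, $T$ is effectively Rosser. Next, Effectively Rosser $\Rightarrow$ Effectively exact Rosser follows immediately from Corollary~\ref{effective versus effective exact} (equivalently Theorem~\ref{effective exact}) applied at each level $n$. For Effectively exact Rosser $\Rightarrow$ Exact Rosser, I would use the levelwise implication from Fact~\ref{basic fact on n-Rosser}(5), that effectively exact $n$-Rosser implies exact $n$-Rosser; applying this at every $n$ gives exact Rosser. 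Finally, for Exact Rosser $\Rightarrow$ Rosser, I would again argue levelwise: exact $(n+1)$-Rosser implies $(n+1)$-Rosser by Fact~\ref{basic fact on n-Rosser}(2), which implies effectively $n$-Rosser by Theorem~\ref{rosser imply effec rosser}, which implies $n$-Rosser by Fact~\ref{basic fact on n-Rosser}(3); since this holds for every $n$, $T$ is Rosser. (Alternatively, one invokes the remark recorded just after Theorem~\ref{rosser imply effec rosser} that $(n+1)$-Rosser implies exact $n$-Rosser, and runs the cycle in the other direction.)

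Assembling these four implications into a cycle shows all four notions coincide, which is exactly the claimed equivalence. I would also note the complementary easy direction, that Effectively exact Rosser $\Rightarrow$ Effectively Rosser $\Rightarrow$ Rosser and Exact Rosser $\Rightarrow$ Rosser hold immediately from Fact~\ref{basic fact on n-Rosser}(4),(3),(2), so no single implication is doing heavy lifting on its own—the content is entirely in the index-shifting theorems already proved.

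The only subtlety, and the step I expect to require the most care, is the index bookkeeping: every substantive implication (Theorems~\ref{effective exact} and~\ref{rosser imply effec rosser}) is stated for a fixed $n$ and either preserves or lowers the index, so I must be sure that quantifying over all $n\geq 1$ genuinely closes the cycle rather than leaving a gap at some finite level. Because Theorem~\ref{rosser imply effec rosser} trades $(n+1)$-Rosser for effectively $n$-Rosser, the passage from a levelwise notion back to Rosser is legitimate precisely because $n+1$ ranges over all integers $\geq 2$ as $n$ ranges over all integers $\geq 1$, and the base case $n=1$ is covered since $2$-Rosser implies $1$-Rosser by Fact~\ref{basic fact on n-Rosser}(1). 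Once this quantifier interchange is checked, the proof is a short formal deduction with no further computation.
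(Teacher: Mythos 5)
Your proposal is correct and follows essentially the same route as the paper, which obtains Theorem~\ref{equivalent thm} precisely by assembling Theorem~\ref{rosser imply effec rosser}, Theorem~\ref{effective exact} (via Corollary~\ref{effective versus effective exact}), and the trivial implications of Fact~\ref{basic fact on n-Rosser}, applied levelwise over all $n\geq 1$. Your index bookkeeping (trading $(n+1)$-Rosser for effectively $n$-Rosser and quantifying over all $n$) is exactly how the paper closes the cycle.
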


\section{$n$-Rosser + strong definability of $n$-ary recursive functionals implies exact $n$-Rosser}\label{exact Rosser}

In this section, we aim to generalize the Putnam-Smullyan Theorem \ref{PST}  to  $n$-Rosser theories. We first prove Theorem \ref{improve PST} showing that  if $T$  is $n$-Rosser and any $n$-ary recursive functional on $\mathbb{N}^n$ is strongly definable in $T$, then $T$ is exact $n$-Rosser. Then, we prove Theorem \ref{key improve thm} which essentially improves Theorem \ref{improve PST}.

In \cite{Smullyan93}, the notions of semi-$\sf DU$ and $\sf DU$ for a disjoint pair of RE sets are defined. One main tool of Putnam-Smullyan's proof of  Theorem \ref{PST} is the result that semi-$\sf DU$ implies $\sf DU$. To prove this result, Smullyan defined a series of metamathematical notions such as semi-$\sf DU$, $\sf KP$, $\sf CEI$, $\sf EI$, $\sf WEI$, $\sf DG$ and $\sf DU$ (for definitions of these notions, we refer to \cite{Smullyan93}).  In fact, Smullyan  proved in \cite{Smullyan93} that all these notions are equivalent.

A natural question is whether we could define similar notions of semi-$\sf DU$ and $\sf DU$ for a disjoint pair of $n$-ary RE relations and show that they are equivalent. The answer is positive. In Section \ref{semi-DU}, we define the notions of semi-$\sf DU$ and $\sf DU$ for a disjoint pair of $n$-ary RE relations (see Definition \ref{notions about DU}). Then we prove that  semi-$\sf DU$  implies $\sf DU$ for a disjoint pair of $n$-ary RE relations. In Appendix \ref{1st proof} and Appendix \ref{3rd proof}, we give two other proofs of this result. Each proof has its own characteristics and applications in meta-mathematics of arithmetic. In Section \ref{apply of DU}, as an application of ``semi-$\sf DU$ implies $\sf DU$", we first generalize Putnam-Smullyan Theorem \ref{PST} and prove Theorem \ref{improve PST}. Then, we essentially improve Theorem \ref{improve PST} as in Theorem \ref{key improve thm}.

\subsection{Semi-DU implies DU}\label{semi-DU}

Smullyan  introduced the notions of semi-$\sf DU$ and $\sf DU$ for a disjoint pair  of RE sets and proved that semi-$\sf DU$ implies $\sf DU$ in \cite{Smullyan93}.
In this section, we first introduce the notions of semi-$\sf DU$ and $\sf DU$ for a disjoint pair  of $n$-ary RE relations (see Definition \ref{notions about DU}). 
Before proving Theorem \ref{semi-DU is DU}, which is
the main theorem of this section, we give the following definitions.

\begin{definition}\label{notions about DU}
Let $(A,B)$ and $(C,D)$ be  disjoint pairs  of $n$-ary RE relations.
\begin{enumerate}[(1)]
  \item We say a $n$-ary functional $F(\overrightarrow{x})=(f_1(\overrightarrow{x}), \cdots, f_n(\overrightarrow{x}))$ on $\mathbb{N}^n$ is a \emph{semi-reduction} from $(C,D)$ to $(A,B)$ if $F(\overrightarrow{x})$ is recursive and  for any $\overrightarrow{a}\in \mathbb{N}^n$, \begin{enumerate}[(i)]
  \item $\overrightarrow{a}\in C\Rightarrow F(\overrightarrow{a})\in A$;
  \item $\overrightarrow{a}\in D\Rightarrow F(\overrightarrow{a})\in B$.
\end{enumerate}
\item We say a $n$-ary functional $F(\overrightarrow{x})=(f_1(\overrightarrow{x}), \cdots, f_n(\overrightarrow{x}))$ on $\mathbb{N}^n$ is a \emph{reduction} from $(C,D)$ to $(A,B)$ if $F(\overrightarrow{x})$ is recursive and  for any $\overrightarrow{a}\in \mathbb{N}^n$,
\begin{enumerate}[(i)]
  \item $\overrightarrow{a}\in C\Leftrightarrow F(\overrightarrow{a})\in A$;
  \item $\overrightarrow{a}\in D\Leftrightarrow F(\overrightarrow{a})\in B$.
\end{enumerate}
\item We say $(C,D)$ is \emph{semi-reducible (reducible)} to $(A,B)$ if there exists a $n$-ary functional $F(\overrightarrow{x})=(f_1(\overrightarrow{x}), \cdots, f_n(\overrightarrow{x}))$ on $\mathbb{N}^n$ such that it is a semi-reduction (reduction) from $(C,D)$ to $(A,B)$.
\item We say $(A,B)$ is \emph{semi-doubly universal (semi-$\sf DU$)}  if for any  disjoint pair $(C,D)$  of $n$-ary RE relations, there exists a semi-reduction from  $(C,D)$ to $(A,B)$.
\item We say $(A,B)$ is \emph{doubly universal ($\sf DU$)} if for any  disjoint pair $(C,D)$  of $n$-ary RE relations,  there exists a reduction from  $(C,D)$ to $(A,B)$.
\end{enumerate}
\end{definition}

Note that $F(\overrightarrow{x})=(f_1(\overrightarrow{x}), \cdots, f_n(\overrightarrow{x}))$ is a reduction from $(C,D)$ to $(A,B)$ is equivalent with:
\begin{enumerate}[(1)]
  \item $\overrightarrow{a}\in C\Rightarrow F(\overrightarrow{a})\in A$;
  \item $\overrightarrow{a}\in D\Rightarrow F(\overrightarrow{a})\in B$;
  \item $\overrightarrow{a}\notin C\cup D\Rightarrow F(\overrightarrow{a})\notin A\cup B$.
\end{enumerate}

\begin{theorem}\label{semi-DU is DU}
Let $(A,B)$  be  a disjoint pair  of $n$-ary RE relations. If $(A,B)$ is semi-$\sf DU$, then $(A,B)$ is $\sf DU$.
\end{theorem}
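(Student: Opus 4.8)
The plan is to isolate what $\sf DU$ demands beyond semi-$\sf DU$: by the reformulation immediately preceding the statement, clauses (1) and (2) of a reduction hold for \emph{any} semi-reduction, so the whole content is clause (3), that $\overrightarrow{a}\notin C\cup D\Rightarrow F(\overrightarrow{a})\notin A\cup B$. Since semi-$\sf DU$ only asserts the \emph{existence} of a semi-reduction for each disjoint pair, whereas a fixed-point argument needs the semi-reduction to depend recursively on the pair, my first step is to upgrade semi-$\sf DU$ to a uniform version: I would produce a single recursive functional $\Phi(i,j,\overrightarrow{x})$ such that whenever $R^n_i\cap R^n_j=\emptyset$, the map $\overrightarrow{x}\mapsto\Phi(i,j,\overrightarrow{x})$ is a semi-reduction from $(R^n_i,R^n_j)$ to $(A,B)$. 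To obtain this, encode the indices $(i,j)$ into the first coordinate of an $n$-vector via the pairing function $J_2$ (decoding with $K,L$), and build one disjoint universal pair $(\mathcal{A},\mathcal{B})$ of $n$-ary RE relations by dovetailing the enumerations of $R^n_i$ and $R^n_j$ and letting each coded point join $\mathcal{A}$ or $\mathcal{B}$ according to which relation it is first seen to enter (ties to $\mathcal{A}$). This race makes $(\mathcal{A},\mathcal{B})$ disjoint unconditionally, and faithful to $(R^n_i,R^n_j)$ whenever the latter is disjoint; applying semi-$\sf DU$ once to $(\mathcal{A},\mathcal{B})$ and composing the resulting semi-reduction with the encoding yields $\Phi$.

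With $\Phi$ in hand, I would fix a disjoint pair $(C,D)$ and a canonical point $\overbrace{0}\in\mathbb{N}^n$, and build the reduction via Theorem \ref{coro of SDRT} applied to the recursive functional $G(u,v)\triangleq\Phi(u,v,\overbrace{0})$ and to two $3n$-ary RE relations $M_1(\overrightarrow{x},\overrightarrow{y},\overrightarrow{z})$ and $M_2(\overrightarrow{x},\overrightarrow{y},\overrightarrow{z})$ defined by a second race. Here $\overrightarrow{z}$ gets instantiated as $G(f_1(\overrightarrow{y}),f_2(\overrightarrow{y}))$, which is exactly the value $F(\overrightarrow{y})$ I am defining; both $M_1,M_2$ force $\overrightarrow{x}=\overbrace{0}$, so the fixed-point relations $R^n_{f_1(\overrightarrow{y})}$ and $R^n_{f_2(\overrightarrow{y})}$ are subsets of $\{\overbrace{0}\}$. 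The race watches the four semidecidable events $[\overrightarrow{y}\in C]$, $[\overrightarrow{y}\in D]$, $[\overrightarrow{z}\in A]$, $[\overrightarrow{z}\in B]$, and puts $\overbrace{0}$ into $R^n_{f_1(\overrightarrow{y})}$ (the ``$A$-side'') if the first event to fire is $[\overrightarrow{y}\in C]$ or $[\overrightarrow{z}\in B]$, and into $R^n_{f_2(\overrightarrow{y})}$ (the ``$B$-side'') if it is $[\overrightarrow{y}\in D]$ or $[\overrightarrow{z}\in A]$. Theorem \ref{coro of SDRT} returns recursive $f_1,f_2$ solving the self-referential equations, and I set $F(\overrightarrow{y})\triangleq G(f_1(\overrightarrow{y}),f_2(\overrightarrow{y}))$.

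The verification is a short case analysis. By the tie-breaking, $\overbrace{0}$ is assigned to at most one side, so $(R^n_{f_1(\overrightarrow{y})},R^n_{f_2(\overrightarrow{y})})$ is disjoint and $\Phi(f_1(\overrightarrow{y}),f_2(\overrightarrow{y}),\cdot)$ is therefore a genuine semi-reduction; evaluated at $\overbrace{0}$ this gives $\overbrace{0}\in R^n_{f_1(\overrightarrow{y})}\Rightarrow F(\overrightarrow{y})\in A$ and $\overbrace{0}\in R^n_{f_2(\overrightarrow{y})}\Rightarrow F(\overrightarrow{y})\in B$. If $\overrightarrow{y}\in C$, neither $[\overrightarrow{z}\in A]$ nor $[\overrightarrow{z}\in B]$ can fire first, since each would, through the semi-reduction, place $F(\overrightarrow{y})$ in both $A$ and $B$ (using $A\cap B=\emptyset$); hence $[\overrightarrow{y}\in C]$ fires first and $F(\overrightarrow{y})\in A$, giving clause (1), and symmetrically clause (2). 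If $\overrightarrow{y}\notin C\cup D$, then $[\overrightarrow{y}\in C]$ and $[\overrightarrow{y}\in D]$ never fire, so were $F(\overrightarrow{y})\in A\cup B$, one of $[\overrightarrow{z}\in A],[\overrightarrow{z}\in B]$ would fire first and again force $F(\overrightarrow{y})$ into $A\cap B$; this contradiction yields clause (3), so $F$ is a reduction and $(A,B)$ is $\sf DU$.

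The step I expect to be the real obstacle is reconciling two competing demands: to invoke the uniform semi-reduction I must keep $(R^n_{f_1(\overrightarrow{y})},R^n_{f_2(\overrightarrow{y})})$ disjoint, yet to secure clause (3) I must be able to react to an erroneous membership $F(\overrightarrow{y})\in A$ by throwing $\overbrace{0}$ onto the opposite side. Naively adding $\overbrace{0}$ to the $B$-side whenever $F(\overrightarrow{y})\in A$ destroys disjointness (e.g.\ when $\overrightarrow{y}\in C$). Committing irrevocably to whichever of the four events is observed first is precisely what resolves this tension, and aligning its four cases and tie-breaking with $A\cap B=\emptyset$ is the delicate part. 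A second point, easy to overlook, is that the uniformization is not optional: Theorem \ref{coro of SDRT} requires $G$ to be an honest recursive functional, so the bare existence statement of semi-$\sf DU$ must first be promoted to the recursive $\Phi$.
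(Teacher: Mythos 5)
Your proof is correct, and while it shares the two essential ingredients of the paper's main argument, it organizes them into a genuinely different decomposition. The paper factors the implication through a chain of auxiliary properties, semi-$\sf DU\Rightarrow\sf KP\Rightarrow\sf CEI\Rightarrow\sf EI\Rightarrow\sf WEI\Rightarrow\sf DU$ (Theorem \ref{semi DU to KP}, Proposition \ref{KP to CEI}, Theorem \ref{WEI is DU}), with the final step applying Theorem \ref{coro of SDRT} via Lemma \ref{key lemma for DU}, where the fixed-point relations are $C$ or $C\cup\{G(f_1(\overrightarrow{y}),f_2(\overrightarrow{y}))\}$ and the $\sf WEI$ clauses carry the case analysis. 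You instead skip all the intermediate notions: you first uniformize semi-$\sf DU$ into a single recursive $\Phi(i,j,\overrightarrow{x})$ by composing one semi-reduction with an index-encoding into a universal disjoint pair (the same device as Theorem \ref{comparison lemma} and Proposition \ref{there exist DU}), and then make one application of Theorem \ref{coro of SDRT} in which the fixed-point relations live inside $\{\overbrace{0}\}$ and the assignment is decided by a four-event priority race. Your $M_1,M_2$ (``$\overrightarrow{y}\in C$ or $\overrightarrow{z}\in B$ fires first'' versus ``$\overrightarrow{y}\in D$ or $\overrightarrow{z}\in A$ fires first'') are the ``before''-style disjoint refinement of the relations $M_1(\overrightarrow{x},y)\triangleq\overrightarrow{x}\in C\vee S(y,\overrightarrow{x},y)\in B$ and $M_2(\overrightarrow{x},y)\triangleq\overrightarrow{x}\in D\vee S(y,\overrightarrow{x},y)\in A$ used in the paper's third proof (Theorem \ref{SF is DU}); the difference is that the third proof gets its self-reference from the separation functional and avoids the recursion theorem, whereas you get it from Theorem \ref{coro of SDRT} and must therefore keep the fixed-point pair disjoint, which is exactly what your irrevocable first-to-fire commitment achieves. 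Your verification is sound: disjointness of $(R^n_{f_1(\overrightarrow{y})},R^n_{f_2(\overrightarrow{y})})$ licenses the use of $\Phi$ at $\overbrace{0}$, and each of the three cases is settled by the observation that a ``wrong'' winner of the race would force $F(\overrightarrow{y})\in A\cap B=\emptyset$. What your route buys is a self-contained proof with no auxiliary vocabulary; what the paper's route buys is Corollary \ref{equ of meta property}, the equivalence of semi-$\sf DU$, $\sf KP$, $\sf CEI$, $\sf EI$, $\sf WEI$, $\sf DG$ and $\sf DU$, which is of independent interest. You are also right on both of your flagged points: the uniformization is not optional, since Theorem \ref{coro of SDRT} needs an honest recursive functional $G$, and the race is precisely what reconciles reacting to $F(\overrightarrow{y})\in A\cup B$ with preserving disjointness.
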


In the rest of this section, we prove Theorem \ref{semi-DU is DU}. Our proof uses Theorem \ref{coro of SDRT}. We first introduce the notions of 
$\sf EI$ and $\sf WEI$ theories. 

\begin{definition}\label{CEI}
Let $(A,B)$  be  a disjoint pair  of $n$-ary RE relations.
\begin{enumerate}[(1)]
  \item We say $(A,B)$ is \emph{$\sf EI$} if there is a recursive $n$-ary functional $F(x,y)$ on $\mathbb{N}^2$ such that for any $i,j\in\omega$, if $A\subseteq R^n_i, B\subseteq R^n_j$ and $R^n_i\cap R^n_j=\emptyset$, then $F(x,y)\notin R^n_i\cup R^n_j$.
  \item We say $(A,B)$ is \emph{$\sf WEI$} if there is a recursive $n$-ary functional $F(x,y)$ on $\mathbb{N}^2$ such that for any $i,j\in\omega$,
\begin{enumerate}[(i)]
  \item if $R^n_i=A$ and $R^n_j=B$, then $F(i,j)\notin A\cup B$;
  \item if $R^n_i=A$ and $R^n_j=B\cup \{F(i,j)\}$, then $F(i,j)\in A$;
  \item if $R^n_i=A\cup \{F(i,j)\}$ and $R^n_j=B$, then $F(i,j)\in B$.
\end{enumerate}
\end{enumerate}
\end{definition}

Our proof strategy of Theorem \ref{semi-DU is DU} is as follows.  For the definitions of $\sf CEI$ and $\sf KP$, we refer to Appendix \ref{1st proof}. In Appendix \ref{1st proof}, we prove that semi-$\sf DU\Rightarrow \sf KP\Rightarrow \sf CEI$ (see Theorem \ref{semi DU to KP} and Proposition \ref{KP to CEI}). 
Clearly, we have $\sf CEI\Rightarrow\sf EI\Rightarrow \sf WEI$. To prove Theorem \ref{semi-DU is DU}, it suffices to show that $\sf WEI$ implies $\sf DU$. In Theorem \ref{WEI is DU}, we prove that $\sf WEI$ implies $\sf DU$. To prove Theorem \ref{WEI is DU}, we first prove a lemma as follows, which uses a generalized version of Strong Double Recursion Theorem as in Theorem \ref{coro of SDRT}.

\begin{lemma}\label{key lemma for DU}
For any $n$-ary RE relations $A,B,C,D$ and any recursive $n$-ary functional $G(x,y)$ on $\mathbb{N}^2$, there exist $n$-ary recursive functions $f_1(\overrightarrow{y})$ and $f_2(\overrightarrow{y})$ such that for any $\overrightarrow{y}\in \mathbb{N}^n$, we have:
\begin{enumerate}[(1)]
  \item $\overrightarrow{y} \in B\Rightarrow R^n_{f_1(\overrightarrow{y})}=C\cup\{G(f_1(\overrightarrow{y}), f_2(\overrightarrow{y}))\}$;
  \item $\overrightarrow{y} \notin B\Rightarrow R^n_{f_1(\overrightarrow{y})}=C$;
  \item $\overrightarrow{y} \in A\Rightarrow R^n_{f_2(\overrightarrow{y})}=D\cup\{G(f_1(\overrightarrow{y}), f_2(\overrightarrow{y}))\}$;
  \item $\overrightarrow{y} \notin A\Rightarrow R^n_{f_2(\overrightarrow{y})}=D$.
\end{enumerate}
\end{lemma}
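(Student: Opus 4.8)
The plan is to apply Theorem \ref{coro of SDRT} with a suitable choice of target relations $M_1, M_2$ and the given functional $G$, so that the four case-distinctions on whether $\overrightarrow{y}$ lands in $A$ or $B$ fall out directly from the biconditionals that the theorem hands us. The four conclusions involve two indices $f_1(\overrightarrow{y})$ and $f_2(\overrightarrow{y})$: the relation $R^n_{f_1(\overrightarrow{y})}$ must equal $C$ augmented by the single point $G(f_1,f_2)$ exactly when $\overrightarrow{y}\in B$, and symmetrically $R^n_{f_2(\overrightarrow{y})}$ must equal $D$ augmented by that same point exactly when $\overrightarrow{y}\in A$. The key observation is that ``$R^n_{f_1(\overrightarrow{y})}=C$ or $C\cup\{G(f_1,f_2)\}$ according to whether $\overrightarrow{y}\in B$'' can be phrased membership-wise as a single RE condition on the running variable $\overrightarrow{x}$, parametrized by $\overrightarrow{y}$ and by the two code-values $z_1,z_2$ that will become $f_1,f_2$.

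Concretely, I would define the two $3n$-ary RE relations
\begin{align*}
M_1(\overrightarrow{x},\overrightarrow{y},\overrightarrow{z}) &\triangleq \overrightarrow{x}\in C \ \vee\ \big(\overrightarrow{y}\in B \ \wedge\ \overrightarrow{x}=G(\overrightarrow{z})\big);\\
M_2(\overrightarrow{x},\overrightarrow{y},\overrightarrow{z}) &\triangleq \overrightarrow{x}\in D \ \vee\ \big(\overrightarrow{y}\in A \ \wedge\ \overrightarrow{x}=G(\overrightarrow{z})\big),
\end{align*}
where I abbreviate $G(\overrightarrow{z})=G(z_1,z_2)$ so that $\overrightarrow{z}$ ranges over the pair of indices. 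These are RE because $A,B,C,D$ are RE, $G$ is recursive, and equality is decidable, so each $M_i$ is a finite Boolean combination of RE conditions and recursive conditions of the allowed shape. Applying Theorem \ref{coro of SDRT} to $M_1,M_2$ and the functional $G$ yields $n$-ary recursive functions $f_1(\overrightarrow{y}),f_2(\overrightarrow{y})$ with
\begin{align*}
\overrightarrow{x}\in R^n_{f_1(\overrightarrow{y})} &\Leftrightarrow M_1\big(\overrightarrow{x},\overrightarrow{y},G(f_1(\overrightarrow{y}),f_2(\overrightarrow{y}))\big);\\
\overrightarrow{x}\in R^n_{f_2(\overrightarrow{y})} &\Leftrightarrow M_2\big(\overrightarrow{x},\overrightarrow{y},G(f_1(\overrightarrow{y}),f_2(\overrightarrow{y}))\big).
\end{align*}
Unwinding the first biconditional: if $\overrightarrow{y}\in B$ then $\overrightarrow{x}\in R^n_{f_1(\overrightarrow{y})}$ holds exactly when $\overrightarrow{x}\in C$ or $\overrightarrow{x}=G(f_1(\overrightarrow{y}),f_2(\overrightarrow{y}))$, which is precisely $R^n_{f_1(\overrightarrow{y})}=C\cup\{G(f_1(\overrightarrow{y}),f_2(\overrightarrow{y}))\}$ (conclusion (1)); and if $\overrightarrow{y}\notin B$ the second disjunct in $M_1$ collapses, giving $R^n_{f_1(\overrightarrow{y})}=C$ (conclusion (2)). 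The second biconditional yields conclusions (3) and (4) symmetrically, using $A$ and $D$ in place of $B$ and $C$.

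The only point needing care—and the one I would flag as the main obstacle, though it is more bookkeeping than genuine difficulty—is that the argument $\overrightarrow{z}$ fed into $G$ inside $M_1,M_2$ is substituted by Theorem \ref{coro of SDRT} with the fixed-point values $(f_1(\overrightarrow{y}),f_2(\overrightarrow{y}))$, so the point $G(\overrightarrow{z})$ appearing in the definitions of $M_1,M_2$ becomes exactly the point $G(f_1(\overrightarrow{y}),f_2(\overrightarrow{y}))$ that we want adjoined. One must check that the variable matching in Theorem \ref{coro of SDRT}—where the $\overrightarrow{z}$-block is the placeholder that the theorem identifies with $G(f_1,f_2)$—aligns with how $\overrightarrow{z}$ enters $M_1,M_2$ only through $G$. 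Because Theorem \ref{coro of SDRT} is stated precisely so that the $\overrightarrow{z}$ argument of $M_1,M_2$ is replaced by $G(f_1(\overrightarrow{y}),f_2(\overrightarrow{y}))$, this matches automatically, and the four desired equalities follow without further computation.
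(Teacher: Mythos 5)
Your proposal is correct and follows essentially the same route as the paper: the paper defines $M_1(\overrightarrow{x},\overrightarrow{y},\overrightarrow{z}) \triangleq \overrightarrow{x}\in C \vee [\overrightarrow{x}=\overrightarrow{z}\wedge \overrightarrow{y}\in B]$ (and symmetrically $M_2$ with $D$ and $A$), applies Theorem \ref{coro of SDRT} with the given $G$, and reads off (1)--(4) from the resulting biconditionals exactly as you do. The one notational point to fix is that in Theorem \ref{coro of SDRT} the third block $\overrightarrow{z}$ is an $n$-vector that the theorem replaces by the value $G(f_1(\overrightarrow{y}),f_2(\overrightarrow{y}))$, so the clause inside $M_1,M_2$ should read $\overrightarrow{x}=\overrightarrow{z}$ rather than $\overrightarrow{x}=G(\overrightarrow{z})$; treating $\overrightarrow{z}$ as the pair $(z_1,z_2)$ of indices with $G$ applied to it is what happens one level down, in the derivation of Theorem \ref{coro of SDRT} from Theorem \ref{SDRT}, and taking your definition literally together with the statement of Theorem \ref{coro of SDRT} would produce a spurious double application of $G$.
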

\begin{proof}\label{}
Define $3n$-ary RE relations $M_1(\overrightarrow{x},\overrightarrow{y}, \overrightarrow{z})\triangleq \overrightarrow{x}\in C \vee [\overrightarrow{x}=\overrightarrow{z}\wedge \overrightarrow{y}\in B]$ and $M_2(\overrightarrow{x},\overrightarrow{y}, \overrightarrow{z})\triangleq \overrightarrow{x}\in D \vee [\overrightarrow{x}=\overrightarrow{z}\wedge \overrightarrow{y}\in A]$.

Apply Theorem \ref{coro of SDRT} to $M_1(\overrightarrow{x},\overrightarrow{y}, \overrightarrow{z}), M_2(\overrightarrow{x},\overrightarrow{y}, \overrightarrow{z})$ and $G(x,y)$, there are recursive functions $f_1(\overrightarrow{y})$ and $f_2(\overrightarrow{y})$ such that:

\begin{align*}
\overrightarrow{x}\in R^n_{f_1(\overrightarrow{y})} \Leftrightarrow M_1(\overrightarrow{x}, \overrightarrow{y}, G(f_1(\overrightarrow{y}), f_2(\overrightarrow{y})));\\
\overrightarrow{x}\in R^n_{f_2(\overrightarrow{y})} \Leftrightarrow M_2(\overrightarrow{x}, \overrightarrow{y}, G(f_1(\overrightarrow{y}), f_2(\overrightarrow{y}))).
\end{align*}

Then we have:
\begin{enumerate}[(I)]
  \item $\overrightarrow{x}\in R^n_{f_1(\overrightarrow{y})} \Leftrightarrow  \overrightarrow{x}\in C \vee [\overrightarrow{x}=G(f_1(\overrightarrow{y}), f_2(\overrightarrow{y}))\wedge \overrightarrow{y}\in B]$;
  \item $\overrightarrow{x}\in R^n_{f_2(\overrightarrow{y})} \Leftrightarrow  \overrightarrow{x}\in D \vee [\overrightarrow{x}=G(f_1(\overrightarrow{y}), f_2(\overrightarrow{y}))\wedge \overrightarrow{y}\in A]$.
\end{enumerate}

From (I)-(II), we have  (1)-(4).
\end{proof}

\begin{theorem}\label{WEI is DU}
Let $(C,D)$  be  a disjoint pair  of $n$-ary RE relations. If $(C,D)$ is $\sf WEI$, then $(C,D)$ is $\sf DU$.
\end{theorem}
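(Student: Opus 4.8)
The plan is to prove that $\sf WEI$ implies $\sf DU$ by combining the double recursion machinery of Lemma \ref{key lemma for DU} with the definitional property of a $\sf WEI$ witness. Given a disjoint pair $(C,D)$ that is $\sf WEI$ via a recursive functional $G(x,y)$ on $\mathbb{N}^2$, I must show that an arbitrary disjoint pair $(A,B)$ of $n$-ary RE relations reduces to $(C,D)$; that is, I must construct a recursive $n$-ary functional $F(\overrightarrow{x})$ with $\overrightarrow{a}\in A\Leftrightarrow F(\overrightarrow{a})\in C$ and $\overrightarrow{a}\in B\Leftrightarrow F(\overrightarrow{a})\in D$. The natural candidate is $F(\overrightarrow{y})=G(f_1(\overrightarrow{y}),f_2(\overrightarrow{y}))$, where $f_1,f_2$ are the recursive functions produced by Lemma \ref{key lemma for DU}. (One subtlety: the roles of $(A,B)$ and $(C,D)$ in the theorem statement versus Lemma \ref{key lemma for DU} must be matched carefully, since in the lemma $C,D$ play the role of the ``target'' relations and $A,B$ the ``source''; I will instantiate the lemma with the $\sf WEI$ pair as $C,D$ and the arbitrary pair as the $A,B$ of the lemma.)

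First I would invoke Lemma \ref{key lemma for DU} with the four relations set so that $R^n_{f_1(\overrightarrow{y})}$ equals $C$ exactly when $\overrightarrow{y}\notin B$ and equals $C\cup\{F(\overrightarrow{y})\}$ exactly when $\overrightarrow{y}\in B$, and symmetrically $R^n_{f_2(\overrightarrow{y})}$ equals $D$ or $D\cup\{F(\overrightarrow{y})\}$ according to whether $\overrightarrow{y}\in A$, where $F(\overrightarrow{y})=G(f_1(\overrightarrow{y}),f_2(\overrightarrow{y}))$. Then I would fix an arbitrary $\overrightarrow{y}$ and split into the four Boolean cases according to membership of $\overrightarrow{y}$ in $A$ and in $B$. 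Since $A$ and $B$ are disjoint, the case $\overrightarrow{y}\in A\cap B$ is vacuous, so there are really three cases. In each case the lemma pins down exactly which of the three indexing situations of the $\sf WEI$ definition (clauses (i), (ii), (iii) of Definition \ref{CEI}(2)) applies to the pair $(f_1(\overrightarrow{y}),f_2(\overrightarrow{y}))$.

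The heart of the argument is to feed each case into the $\sf WEI$ property and read off where $F(\overrightarrow{y})$ lands. Concretely: if $\overrightarrow{y}\notin A\cup B$, then $R^n_{f_1(\overrightarrow{y})}=C$ and $R^n_{f_2(\overrightarrow{y})}=D$, so clause (i) of $\sf WEI$ gives $F(\overrightarrow{y})=G(f_1,f_2)\notin C\cup D$. If $\overrightarrow{y}\in A$ (hence $\overrightarrow{y}\notin B$), then $R^n_{f_1(\overrightarrow{y})}=C$ while $R^n_{f_2(\overrightarrow{y})}=D\cup\{F(\overrightarrow{y})\}$, which is precisely the hypothesis of clause (iii), yielding $F(\overrightarrow{y})\in D$. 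If $\overrightarrow{y}\in B$ (hence $\overrightarrow{y}\notin A$), then $R^n_{f_1(\overrightarrow{y})}=C\cup\{F(\overrightarrow{y})\}$ and $R^n_{f_2(\overrightarrow{y})}=D$, matching clause (ii) and giving $F(\overrightarrow{y})\in C$. Collecting these three conclusions shows $\overrightarrow{y}\in A\Rightarrow F(\overrightarrow{y})\in C$, $\overrightarrow{y}\in B\Rightarrow F(\overrightarrow{y})\in D$, and $\overrightarrow{y}\notin A\cup B\Rightarrow F(\overrightarrow{y})\notin C\cup D$, which is exactly the three-clause characterization of a reduction noted just after Definition \ref{notions about DU}. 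Hence $F$ is a reduction from $(A,B)$ to $(C,D)$, and since $(A,B)$ was arbitrary, $(C,D)$ is $\sf DU$.

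The main obstacle I anticipate is bookkeeping rather than conceptual: one must verify that the hypotheses of the $\sf WEI$ clauses are met \emph{on the nose}. The $\sf WEI$ definition demands exact equalities like $R^n_i=A$ and $R^n_j=B\cup\{F(i,j)\}$, not mere inclusions, so I must confirm that Lemma \ref{key lemma for DU} delivers genuine equalities of the relations $R^n_{f_1(\overrightarrow{y})}$ and $R^n_{f_2(\overrightarrow{y})}$ with $C$, $D$ and their one-point extensions — which it does, precisely because the lemma's clauses (1)--(4) are stated as equalities. A second point requiring care is that in the degenerate situation where $F(\overrightarrow{y})$ already happens to lie in $C$ (resp.\ $D$), the set $C\cup\{F(\overrightarrow{y})\}$ collapses to $C$; I must check that the $\sf WEI$ conclusions remain consistent in this case and do not contradict disjointness of $C$ and $D$, which follows because the conclusions $F(\overrightarrow{y})\in C$ and $F(\overrightarrow{y})\in D$ are reached only under mutually exclusive hypotheses on $\overrightarrow{y}$.
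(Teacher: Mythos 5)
Your overall strategy is exactly the paper's: apply Lemma \ref{key lemma for DU} to $A,B,C,D$ and the $\sf WEI$ witness $G$, set $F(\overrightarrow{y})=G(f_1(\overrightarrow{y}),f_2(\overrightarrow{y}))$, and read off the three cases from the $\sf WEI$ clauses. However, your case analysis as written is internally inconsistent and two of its three conclusions are wrong. In Definition \ref{CEI}(2), instantiated with the $\sf WEI$ pair $(C,D)$, clause (ii) reads ``if $R^n_i=C$ and $R^n_j=D\cup\{F(i,j)\}$, then $F(i,j)\in C$'' and clause (iii) reads ``if $R^n_i=C\cup\{F(i,j)\}$ and $R^n_j=D$, then $F(i,j)\in D$.'' So when $\overrightarrow{y}\in A$ you have $R^n_{f_1(\overrightarrow{y})}=C$ and $R^n_{f_2(\overrightarrow{y})}=D\cup\{F(\overrightarrow{y})\}$, which is the hypothesis of clause (ii), not (iii), and it yields $F(\overrightarrow{y})\in C$, not $F(\overrightarrow{y})\in D$; symmetrically, $\overrightarrow{y}\in B$ falls under clause (iii) and yields $F(\overrightarrow{y})\in D$, not $F(\overrightarrow{y})\in C$. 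As you wrote it, the middle paragraph derives $\overrightarrow{y}\in A\Rightarrow F(\overrightarrow{y})\in D$ and $\overrightarrow{y}\in B\Rightarrow F(\overrightarrow{y})\in C$ (a ``cross-reduction,'' which is not a reduction from $(A,B)$ to $(C,D)$), and then the ``collecting'' sentence silently asserts the opposite, correct, implications. Once the clause labels and their conclusions are matched correctly, your argument coincides with the paper's proof verbatim.

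One further remark: your worry about the degenerate collapse $C\cup\{F(\overrightarrow{y})\}=C$ is legitimate but resolves itself more cleanly than you indicate. If, say, $\overrightarrow{y}\in B$ and $F(\overrightarrow{y})$ were already in $C$, then $R^n_{f_1(\overrightarrow{y})}=C$ and $R^n_{f_2(\overrightarrow{y})}=D$, so clause (i) would force $F(\overrightarrow{y})\notin C\cup D$, contradicting $F(\overrightarrow{y})\in C$; hence the collapse cannot occur and clause (iii) applies with a genuine one-point extension. The mutual exclusivity of the hypotheses on $\overrightarrow{y}$ is not what does the work here.
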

\begin{proof}\label{}
Suppose $(C,D)$  is $\sf WEI$ under a recursive $n$-ary functional 
\[G(x,y)=(g_1(x,y), \cdots, g_n(x,y))\] on $\mathbb{N}^2$.
Let $(A,B)$  be  any disjoint pair  of $n$-ary RE relations. We show that $(A,B)$  is reducible to $(C,D)$.

Apply Lemma \ref{key lemma for DU} to $A,B, C,D$ and $G(x,y)$. Then there exist recursive functions $f_1(\overrightarrow{y})$ and $f_2(\overrightarrow{y})$ such that (1)-(4) in Lemma \ref{key lemma for DU} hold.
Define a $n$-ary functional $H(\overrightarrow{y})\triangleq G(f_1(\overrightarrow{y}), f_2(\overrightarrow{y}))$ on $\mathbb{N}^n$.  Note that $H(\overrightarrow{y})$ is recursive.

\begin{claim}
The functional $H(\overrightarrow{y})$ is a reduction from $(A,B)$ to $(C,D)$.
\end{claim}
\begin{proof}\label{}
Suppose $\overrightarrow{y}\in A$. Then $R^n_{f_1(\overrightarrow{y})}=C$ and $R^n_{f_2(\overrightarrow{y})}=D\cup\{G(f_1(\overrightarrow{y}), f_2(\overrightarrow{y}))\}$. By Definition \ref{CEI}(ii),  $H(\overrightarrow{y})\in C$.

Suppose $\overrightarrow{y}\in B$. Then $R^n_{f_1(\overrightarrow{y})}=C\cup\{G(f_1(\overrightarrow{y}), f_2(\overrightarrow{y}))\}$ and $R^n_{f_2(\overrightarrow{y})}=D$. By Definition \ref{CEI}(iii),  $H(\overrightarrow{y})\in D$.

Suppose $\overrightarrow{y}\notin A\cup B$. Then $R^n_{f_1(\overrightarrow{y})}=C$ and $R^n_{f_2(\overrightarrow{y})}=D$. By Definition \ref{CEI}(i),  $H(\overrightarrow{y})\notin C\cup D$. Thus, $H(\overrightarrow{y})$ is a reduction from $(A,B)$ to $(C,D)$.
\end{proof}
Thus, $(C,D)$ is $\sf DU$.
\end{proof}

Clearly, we have $\sf CEI\Rightarrow\sf EI\Rightarrow \sf WEI$. Since semi-$\sf DU\Rightarrow \sf KP\Rightarrow \sf CEI$ from Appendix \ref{1st proof}, as a corollary of Theorem \ref{WEI is DU},  we have semi-$\sf DU$ implies $\sf DU$. This finishes the proof of Theorem \ref{semi-DU is DU}.

\subsection{Some applications in meta-mathematics of arithmetic}\label{apply of DU}

In this section, we first prove Theorem \ref{improve PST} which generalizes Theorem \ref{PST} to $n$-Rosser theories. Our proof of Theorem \ref{improve PST} uses Theorem \ref{semi-DU is DU}. Then we prove Theorem \ref{key improve thm} which essentially improves Theorem \ref{improve PST}. Our proof of Theorem \ref{key improve thm}  does not use Theorem \ref{semi-DU is DU}.

We first introduce the notion of ``strongly definable" for $n$-ary functionals.

\begin{definition}\label{}
Let $T$ be a consistent RE theory and $I: {\sf Num}\unlhd T$.
We say a $n$-ary functional $F(\overrightarrow{x})=(f_1(\overrightarrow{x}), \cdots, f_n(\overrightarrow{x}))$ on $\mathbb{N}^n$ is  \emph{strongly definable} in $T$ if for any $1\leq i\leq n$, there exists a formula $\varphi_i(\overrightarrow{x},y)$ of $(n+1)$-free variables such that for any $\overrightarrow{a}=(a_1, \cdots, a_n)\in \mathbb{N}^n, T\vdash \forall y [\varphi_i(\overline{a_1}^I, \cdots, \overline{a_n}^I,y)\leftrightarrow y=\overline{f_i(\overrightarrow{a})}^I].$
\end{definition}

Now we prove that ``$n$-Rosser" implies ``exact $n$-Rosser" under the assumption that any $n$-ary recursive functional on $\mathbb{N}^n$ is strongly definable in $T$. 

\begin{theorem}\label{improve PST}
Suppose $T$  is $n$-Rosser and any $n$-ary recursive functional on $\mathbb{N}^n$ is strongly definable in $T$, then $T$ is exact $n$-Rosser.
\end{theorem}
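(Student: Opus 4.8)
The plan is to reduce the statement to the result that semi-$\sf DU$ implies $\sf DU$ (Theorem \ref{semi-DU is DU}), applied to the nuclei-like pair naturally associated with $T$. The key observation is that the hypothesis ``$T$ is $n$-Rosser'' says exactly that there is an interpretation $I: {\sf Num}\unlhd T$ witnessing that every disjoint pair of $n$-ary RE relations can be strongly separated in $T$ w.r.t. $I$. I would first identify the canonical disjoint pair of $n$-ary RE relations that captures provability and refutability of formulas applied to numerals. Concretely, fix the interpretation $I$, enumerate formulas with $n$ free variables by their codes, and define the $n$-ary RE relations
\begin{align*}
\Theta_1 &= \{(a_1,\dots,a_n): \text{the formula coded by } a_1 \text{ proves } E_{a_1}(\overline{a_2}^I,\dots) \text{ in } T\},\\
\Theta_2 &= \{(a_1,\dots,a_n): \text{the corresponding negation is provable in } T\},
\end{align*}
or more cleanly a pair built from the diagonal so that strong separation of an arbitrary pair reduces to separation of $(\Theta_1,\Theta_2)$. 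The precise definition must be arranged so that $n$-Rosserness of $T$ is equivalent to saying $(\Theta_1,\Theta_2)$ is semi-$\sf DU$: every disjoint pair $(C,D)$ of $n$-ary RE relations semi-reduces to $(\Theta_1,\Theta_2)$ via the map sending $\overrightarrow{a}$ to the tuple encoding a separating formula for $(C,D)$ evaluated at $\overrightarrow{a}$.

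The second step is to invoke Theorem \ref{semi-DU is DU} to upgrade semi-$\sf DU$ to $\sf DU$: the pair $(\Theta_1,\Theta_2)$ is then doubly universal, meaning every disjoint pair $(C,D)$ of $n$-ary RE relations admits an exact \emph{reduction} $F$ (not merely a semi-reduction), so that $\overrightarrow{a}\in C \Leftrightarrow F(\overrightarrow{a})\in \Theta_1$ and $\overrightarrow{a}\in D \Leftrightarrow F(\overrightarrow{a})\in \Theta_2$. This is where the strong-definability hypothesis enters. The reduction $F=(f_1,\dots,f_n)$ is a recursive $n$-ary functional on $\mathbb{N}^n$, hence by hypothesis strongly definable in $T$ via formulas $\varphi_i(\overrightarrow{x},y)$. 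Using these $\varphi_i$, I would pull back the separating formula $\phi$ associated to $(\Theta_1,\Theta_2)$ along $F$: define
\[
\psi(\overrightarrow{x}) \triangleq \exists y_1\cdots\exists y_n\Bigl(\bigwedge_{i=1}^n \varphi_i(\overrightarrow{x},y_i)\wedge \phi(y_1,\dots,y_n)\Bigr),
\]
so that for each numeral tuple, $T$ proves $\psi(\overrightarrow{a}^I)$ equivalent to $\phi(\overline{f_1(\overrightarrow a)}^I,\dots,\overline{f_n(\overrightarrow a)}^I)$ by the strong-definability equivalences. The exactness of the reduction $F$ then transfers to exactness of the separation of $(C,D)$ by $\psi$: the biconditionals $\overrightarrow{a}\in C\Leftrightarrow T\vdash \psi(\overrightarrow{a}^I)$ and $\overrightarrow{a}\in D\Leftrightarrow T\vdash \neg\psi(\overrightarrow{a}^I)$ hold precisely because membership in $\Theta_1,\Theta_2$ coincides with provability/refutability and $F$ is a genuine two-sided reduction.

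The main obstacle I anticipate is setting up the universal pair $(\Theta_1,\Theta_2)$ correctly so that semi-$\sf DU$ is literally equivalent to $n$-Rosserness, and then verifying that strong definability converts a reduction between \emph{sets} into an equivalence between \emph{provability facts} inside $T$. The delicate point is the direction $T\vdash \psi(\overrightarrow{a}^I)\Rightarrow \overrightarrow{a}\in C$: this uses both that $F(\overrightarrow a)\in\Theta_1$ forces $\overrightarrow{a}\in C$ (the $\Leftarrow$ half of the reduction, which is the genuinely new content supplied by $\sf DU$ over semi-$\sf DU$) and that the strong definability of $F$ lets $T$ ``compute'' $F(\overrightarrow a)$ uniquely, so that $T\vdash\psi(\overrightarrow a^I)$ is equivalent to $T\vdash\phi(\overline{F(\overrightarrow a)}^I)$ rather than merely implying it. Care is also needed to confirm that the witnessing interpretation $I$ for exact $n$-Rosserness can be taken to be the same $I$ witnessing $n$-Rosserness, since Definition \ref{def of Rosser} permits $I$ to vary with the notion; here the construction keeps $I$ fixed throughout, which is what makes the reduction go through.
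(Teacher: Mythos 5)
Your overall architecture is exactly the paper's: exhibit a disjoint pair of $n$-ary RE relations that is semi-$\sf DU$ and exactly separable in $T$, upgrade it to $\sf DU$ by Theorem \ref{semi-DU is DU}, and then pull the separating formula back along a strongly definable reduction via $\psi(\overrightarrow{x})=\exists y_1\cdots\exists y_n\bigl(\bigwedge_{i=1}^n\varphi_i(\overrightarrow{x},y_i)\wedge\phi(y_1,\dots,y_n)\bigr)$; that last step is carried out essentially verbatim in the paper. The gap is in the construction of the pair $(\Theta_1,\Theta_2)$. As you define it, the formula being proved varies with the first coordinate of the tuple (and the arities do not match: $E_{a_1}$ would need $n$ arguments but only $a_2,\dots,a_n$ remain). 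More importantly, any such ``universal provability'' pair indexed by formula codes is not exactly separable by a \emph{single fixed} formula, and exact separability of the target pair is precisely what your final transfer step consumes: you need $F(\overrightarrow{a})\in\Theta_1\Leftrightarrow T\vdash\phi(\overline{f_1(\overrightarrow{a})}^I,\dots,\overline{f_n(\overrightarrow{a})}^I)$ for one $\phi$ independent of $\overrightarrow{a}$. Extracting that from a code-indexed pair would require an additional diagonal argument that you have not supplied.

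The paper's construction sidesteps this. Fix a concrete $\sf DU$ pair $(U_1,U_2)$ (Proposition \ref{there exist DU}), use $n$-Rosserness to get a single formula $\phi$ strongly separating it w.r.t.\ $I$, and define $C=\{\overrightarrow{a}:T\vdash\phi(\overline{a_1}^I,\dots,\overline{a_n}^I)\}$ and $D=\{\overrightarrow{a}:T\vdash\neg\phi(\overline{a_1}^I,\dots,\overline{a_n}^I)\}$. Then $(C,D)$ is exactly separated by $\phi$ \emph{by construction}, is disjoint by consistency of $T$, and is semi-$\sf DU$ simply because $U_1\subseteq C$, $U_2\subseteq D$ and every disjoint pair of $n$-ary RE relations reduces to $(U_1,U_2)$; no equivalence between $n$-Rosserness and semi-$\sf DU$-ness is needed, only this one containment. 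Theorem \ref{semi-DU is DU} then makes $(C,D)$ doubly universal, and your pullback argument finishes the proof exactly as you describe, with the same fixed $I$ throughout.
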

\begin{proof}\label{}
Suppose $T$  is $n$-Rosser via an interpretation $I: {\sf Num}\unlhd T$.
Take any $\sf DU$ pair of $n$-ary RE relations (e.g., $(U_1, U_2)$ in Proposition \ref{there exist DU}). Suppose $(U_1, U_2)$ is strongly separable by $\phi(x_1, \cdots, x_n)$ in $T$ w.r.t. $I$. Define $C=\{(a_1, \cdots, a_n)\in \mathbb{N}^n: T\vdash \phi(\overline{a_1}^I, \cdots, \overline{a_n}^I)\}$ and $D=\{(a_1, \cdots, a_n)\in\mathbb{N}^n: T\vdash \neg\phi(\overline{a_1}^I, \cdots, \overline{a_n}^I)\}$. Note that $U_1\subseteq C$ and $U_2\subseteq D$. Since $(U_1, U_2)$ is $\sf DU$, $(C,D)$ is semi$-\sf DU$. By Theorem \ref{semi-DU is DU}, $(C,D)$ is $\sf DU$. Note that $(C,D)$ is exactly separable by $\phi(x_1, \cdots, x_n)$ in $T$  w.r.t. $I$.

Let $(A,B)$  be  a disjoint pair  of $n$-ary RE relations. Since $(C,D)$ is $\sf DU$, let $F(\overrightarrow{x})=(f_1(\overrightarrow{x}), \cdots, f_n(\overrightarrow{x}))$ be a  recursive $n$-ary functional on $\mathbb{N}^n$ that reduces $(A,B)$  to $(C,D)$.

Suppose that for any $1\leq i\leq n$, there exists a formula $\psi_i(x_1, \cdots, x_n,y)$ such that  for any $\overrightarrow{a}=(a_1, \cdots, a_n)\in \mathbb{N}^n, T\vdash \forall y [\psi_i(\overline{a_1}^I, \cdots, \overline{a_n}^I,y)\leftrightarrow y=\overline{f_i(\overrightarrow{a})}^I]$.

Given $\overrightarrow{x}=(x_1, \cdots, x_n)$, define $\theta(\overrightarrow{x})\triangleq\exists y_1\cdots\exists y_n [\psi_1(\overrightarrow{x},y_1)\wedge\cdots\wedge \psi_n(\overrightarrow{x},y_n)\wedge \phi(y_1, \cdots, y_n)]$.
Note that for any $\overrightarrow{a}=(a_1, \cdots, a_n)\in \mathbb{N}^n$, $T\vdash  \theta(\overline{a_1}^I, \cdots, \overline{a_n}^I)\leftrightarrow \phi(\overline{f_1(\overrightarrow{a})}^I,\cdots, \overline{f_n(\overrightarrow{a})}^I)$.

\begin{claim}
$\theta(\overrightarrow{x})$ exactly separates $A$ from $B$ in $T$ w.r.t. $I$.
\end{claim}
\begin{proof}\label{}
For any $\overrightarrow{a}=(a_1, \cdots, a_n)\in \mathbb{N}^n$, we have:
\[\overrightarrow{a}\in A \Leftrightarrow F(\overrightarrow{a})\in C \Leftrightarrow T\vdash \phi(\overline{f_1(\overrightarrow{a})}^I,\cdots, \overline{f_n(\overrightarrow{a})}^I) \Leftrightarrow T\vdash  \theta(\overline{a_1}^I, \cdots, \overline{a_n}^I);\]
\[\overrightarrow{a}\in B \Leftrightarrow F(\overrightarrow{a})\in D \Leftrightarrow T\vdash \neg\phi(\overline{f_1(\overrightarrow{a})}^I,\cdots, \overline{f_n(\overrightarrow{a})}^I) \Leftrightarrow T\vdash   \neg\theta(\overline{a_1}^I, \cdots, \overline{a_n}^I).\]
\end{proof}
Thus, $T$ is exact $n$-Rosser.
\end{proof}

Now we introduce the notion of admissible $n$-ary functionals. We will improve Theorem \ref{improve PST} via this notion. 

\begin{definition}\label{}
Let $T$ be a consistent RE theory and $I: {\sf Num}\unlhd T$.
We say a $n$-ary functional $F(\overrightarrow{x})=(f_1(\overrightarrow{x}), \cdots, f_n(\overrightarrow{x}))$ on $\mathbb{N}^m$ is \emph{admissible} in $T$  if for any formula $\phi(x_1, \cdots, x_n)$,  there exists a formula $\psi(x_1, \cdots, x_m)$ such that for any $\overrightarrow{a}=(a_1, \cdots, a_m)\in \mathbb{N}^m$, we have $T\vdash  \psi(\overline{a_1}^I, \cdots, \overline{a_m}^I)\leftrightarrow \phi(\overline{f_1(\overrightarrow{a})}^I,\cdots, \overline{f_n(\overrightarrow{a})}^I)$.
\end{definition}

It is easy to check that if a $n$-ary functional $F(\overrightarrow{x})=(f_1(\overrightarrow{x}), \cdots, f_n(\overrightarrow{x}))$ on $\mathbb{N}^n$ is strongly definable in $T$, then $F(\overrightarrow{x})$ is admissible in $T$.

\begin{corollary}
Let $T$ be a consistent RE theory. Suppose $T$  is $n$-Rosser  and any $n$-ary recursive functional on $\mathbb{N}^n$ is admissible in $T$, then $T$ is exact $n$-Rosser.
\end{corollary}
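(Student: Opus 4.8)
The plan is to reduce this corollary directly to Theorem \ref{improve PST} by inspecting exactly where the hypothesis of strong definability was actually used in that proof. The corollary weakens ``strongly definable'' to ``admissible'', so I would first verify that admissibility is precisely the property the proof of Theorem \ref{improve PST} invokes, and then observe that the rest of the argument goes through verbatim.

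Concretely, I would trace the proof of Theorem \ref{improve PST}. There, strong definability of the recursive functional $F(\overrightarrow{x})=(f_1(\overrightarrow{x}),\cdots,f_n(\overrightarrow{x}))$ was used in a single place: to produce formulas $\psi_i(x_1,\cdots,x_n,y)$ and to form $\theta(\overrightarrow{x})\triangleq \exists y_1\cdots\exists y_n[\psi_1(\overrightarrow{x},y_1)\wedge\cdots\wedge\psi_n(\overrightarrow{x},y_n)\wedge\phi(y_1,\cdots,y_n)]$ satisfying the key equivalence $T\vdash \theta(\overline{a_1}^I,\cdots,\overline{a_n}^I)\leftrightarrow \phi(\overline{f_1(\overrightarrow{a})}^I,\cdots,\overline{f_n(\overrightarrow{a})}^I)$. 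But this key equivalence is exactly the statement that $F$ is admissible in $T$ for the particular strongly-separating formula $\phi$. So the plan is: repeat the opening moves of the proof of Theorem \ref{improve PST} unchanged --- take a $\sf DU$ pair $(U_1,U_2)$ of $n$-ary RE relations (e.g.\ from Proposition \ref{there exist DU}), let $\phi(x_1,\cdots,x_n)$ strongly separate it w.r.t.\ $I$, set $C$ and $D$ as the sets of tuples that $\phi$ proves and refutes respectively, conclude via Theorem \ref{semi-DU is DU} that $(C,D)$ is $\sf DU$ and is exactly separated by $\phi$ --- and then, given an arbitrary disjoint pair $(A,B)$, take a recursive $n$-ary functional $F$ reducing $(A,B)$ to $(C,D)$.

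At the step where Theorem \ref{improve PST} produced $\theta$ from the strong-definability formulas, I would instead apply the admissibility hypothesis directly to $F$ and to the fixed formula $\phi$: since $F$ is admissible in $T$, there exists a formula $\theta(x_1,\cdots,x_n)$ such that for all $\overrightarrow{a}\in\mathbb{N}^n$, $T\vdash \theta(\overline{a_1}^I,\cdots,\overline{a_n}^I)\leftrightarrow \phi(\overline{f_1(\overrightarrow{a})}^I,\cdots,\overline{f_n(\overrightarrow{a})}^I)$. This $\theta$ satisfies the very equivalence that drove the final claim of Theorem \ref{improve PST}. The concluding computation is then identical: for any $\overrightarrow{a}$, $\overrightarrow{a}\in A\Leftrightarrow F(\overrightarrow{a})\in C\Leftrightarrow T\vdash\phi(\overline{f_1(\overrightarrow{a})}^I,\cdots,\overline{f_n(\overrightarrow{a})}^I)\Leftrightarrow T\vdash\theta(\overline{a_1}^I,\cdots,\overline{a_n}^I)$, and dually for $B$ with $\neg\phi$ and $\neg\theta$. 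Hence $\theta$ exactly separates $A$ from $B$ w.r.t.\ $I$, so $T$ is exact $n$-Rosser.

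I do not anticipate a serious obstacle here: the corollary is essentially a formal strengthening obtained by isolating the weakest property of $F$ that the earlier proof needed. The one thing I would be careful to confirm is that admissibility is applied only to recursive functionals --- and indeed the $F$ produced by the reduction is recursive by Definition \ref{notions about DU}, so the hypothesis that every recursive $n$-ary functional on $\mathbb{N}^n$ is admissible applies to it. The remark just preceding the corollary, that strong definability implies admissibility, confirms this is a genuine generalization of Theorem \ref{improve PST} rather than a reformulation; the proof itself is simply Theorem \ref{improve PST} with the admissibility clause substituted in at the single point of use.
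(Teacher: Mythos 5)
Your proposal is correct and matches the paper's own proof: the paper likewise reuses the opening of the proof of Theorem \ref{improve PST} to obtain the reduction $F$ from $(A,B)$ to the exactly separated $\sf DU$ pair $(C,D)$, and then invokes admissibility of $F$ on the separating formula $\phi$ to produce the formula $\psi$ giving exact separation. Your observation that admissibility is precisely the property the earlier proof extracted from strong definability is exactly the point the paper is making.
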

\begin{proof}\label{}
From the proof of Theorem \ref{improve PST},  for any $\overrightarrow{a}=(a_1, \cdots, a_n)\in \mathbb{N}^n$, we have:
\[\overrightarrow{a}\in A \Leftrightarrow  T\vdash \phi(\overline{f_1(\overrightarrow{a})}^I,\cdots, \overline{f_n(\overrightarrow{a})}^I);\]
\[\overrightarrow{a}\in B \Leftrightarrow T\vdash \neg\phi(\overline{f_1(\overrightarrow{a})}^I,\cdots, \overline{f_n(\overrightarrow{a})}^I).\]

Since the $n$-ary recursive functional $F(\overrightarrow{x})=(f_1(\overrightarrow{x}), \cdots, f_n(\overrightarrow{x}))$ on $\mathbb{N}^n$ is admissible in $T$, then there exists a formula $\psi(x_1, \cdots, x_n)$ such that for any $\overrightarrow{a}=(a_1, \cdots, a_n)\in \mathbb{N}^n$, we have $T\vdash  \psi(\overline{a_1}^I, \cdots, \overline{a_n}^I)\leftrightarrow \phi(\overline{f_1(\overrightarrow{a})}^I,\cdots, \overline{f_n(\overrightarrow{a})}^I)$. Thus, $\psi(x_1, \cdots, x_n)$ exactly separates $A$ from $B$ in $T$ w.r.t. $I$.
\end{proof}

Finally, we improve Theorem \ref{improve PST}. We will show that for Theorem \ref{improve PST}, it suffices to assume that for any $h$, the $n$-ary functional $F(\overrightarrow{x})=\overbrace{J_{n+1}(h, \overrightarrow{x})}$ on $\mathbb{N}^n$ is admissible.
To prove Theorem \ref{key improve thm}, we first introduce a notion and prove a lemma as follows. 

\begin{definition}\label{}
Given a consistent RE theory $T$ with $I: {\sf Num}\unlhd T$, a formula $\varphi(v_1, \cdots, v_n)$ and $\overbrace{x}=(x, \cdots, x)\in \mathbb{N}^n$, we define $\varphi(\overbrace{x})=\varphi(\overline{x}^I, \cdots, \overline{x}^I)$.
\end{definition}

%\item We say $J_{n+1}(h, \overrightarrow{x})$ is admissible in $S$ if for any formula $\varphi(x_1, \cdots, x_n)$, there exists a formula $\psi(x_1, \cdots, x_n)$ such that for any $\overrightarrow{a}=(a_1, \cdots, a_n)\in N^n$, we have $S\vdash \psi(\overline{a_1}, \cdots, \overline{a_n})\leftrightarrow \varphi(\overbrace{J_{n+1}(h, \overrightarrow{a})})$.

\begin{lemma}\label{lemma for exact Rosser}
Suppose $T$ is $n$-Rosser. For any disjoint $(n+1)$-ary relations $M_1(\overrightarrow{x},y)$ and $M_2(\overrightarrow{x},y)$, there is a formula $\varphi(\overrightarrow{x})$ of n-free variables with code $h$ such that for any $\overrightarrow{a}\in \mathbb{N}^n$, $M_1(\overrightarrow{a},h)\Leftrightarrow T\vdash \varphi(\overbrace{J_{n+1}(h, \overrightarrow{a})})$ and $M_2(\overrightarrow{a},h)\Leftrightarrow T\vdash \neg\varphi(\overbrace{J_{n+1}(h, \overrightarrow{a})})$.
\end{lemma}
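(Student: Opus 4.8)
The plan is to exploit the self-referential alignment between the code $h$ of the witnessing formula and the extra coordinate $y$ of $M_1,M_2$, by defining a \emph{single} pair of $n$-ary RE relations uniformly in a candidate code $e$ and then reading off $h$ directly from the $n$-Rosser property, with no appeal to the recursion theorem. Fix the witnessing interpretation $I: {\sf Num}\unlhd T$ for $T$ being $n$-Rosser. For a natural number $c$ of the form $c=J_{n+1}(e,\overrightarrow{a})$ (the decoding into $e$ and $\overrightarrow{a}=(a_1,\ldots,a_n)$ is recursive and unique) I would write $\overbrace{c}=(c,\ldots,c)$ and define two $n$-ary RE relations $S_1,S_2\subseteq\mathbb{N}^n$, supported on diagonal tuples, by
$$\overbrace{c}\in S_1 \iff M_1(\overrightarrow{a},e)\vee (T\vdash \neg E_e(\overbrace{c})), \qquad \overbrace{c}\in S_2 \iff M_2(\overrightarrow{a},e)\vee (T\vdash E_e(\overbrace{c})),$$
where $E_e(\overbrace{c})$ abbreviates $E_e(\overline{c}^I,\ldots,\overline{c}^I)$. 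Both relations are RE, since decoding is recursive, the code of the sentence $E_e(\overbrace{c})$ is computable from $e$ and $c$, and provability in the RE theory $T$ is RE; note they need not be disjoint, so I would use $n$-Rosser in its general (not-necessarily-disjoint) form.

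Applying $n$-Rosser to the pair $(S_1,S_2)$ yields a formula $\varphi$ with $n$ free variables, say with code $h$, that strongly separates $S_1-S_2$ from $S_2-S_1$ w.r.t. $I$; that is, $\overbrace{c}\in S_1-S_2\Rightarrow T\vdash\varphi(\overbrace{c})$ and $\overbrace{c}\in S_2-S_1\Rightarrow T\vdash\neg\varphi(\overbrace{c})$. The crucial step is to instantiate the definition of $S_1,S_2$ at the particular value $e=h$: since $\varphi=E_h$, the clauses ``$T\vdash\neg E_e(\overbrace{c})$'' and ``$T\vdash E_e(\overbrace{c})$'' become exactly ``$T\vdash\neg\varphi(\overbrace{J_{n+1}(h,\overrightarrow{a})})$'' and ``$T\vdash\varphi(\overbrace{J_{n+1}(h,\overrightarrow{a})})$''. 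Writing $P$ for $T\vdash\varphi(\overbrace{J_{n+1}(h,\overrightarrow{a})})$ and $N$ for $T\vdash\neg\varphi(\overbrace{J_{n+1}(h,\overrightarrow{a})})$, at $e=h$ we have $\overbrace{c}\in S_1\iff M_1(\overrightarrow{a},h)\vee N$ and $\overbrace{c}\in S_2\iff M_2(\overrightarrow{a},h)\vee P$, while consistency of $T$ gives $\neg(P\wedge N)$.

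From here the four required implications follow by short case analyses. The forward directions are immediate: if $M_1(\overrightarrow{a},h)$ then $\overbrace{c}\in S_1$, and, using disjointness of $M_1,M_2$, if moreover $\neg P$ then $\overbrace{c}\in S_1-S_2$, forcing $P$ by strong separation, a contradiction; hence $P$. Symmetrically $M_2(\overrightarrow{a},h)\Rightarrow N$. The backward directions are the main point: assuming $P$, consistency gives $\neg N$, so $\overbrace{c}\in S_2$ holds while $\overbrace{c}\in S_1\iff M_1(\overrightarrow{a},h)$; were $M_1(\overrightarrow{a},h)$ to fail we would get $\overbrace{c}\in S_2-S_1$ and hence $N$ by strong separation, contradicting $\neg N$, so $M_1(\overrightarrow{a},h)$ holds, and symmetrically $N\Rightarrow M_2(\overrightarrow{a},h)$. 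Combining, $M_1(\overrightarrow{a},h)\Leftrightarrow T\vdash\varphi(\overbrace{J_{n+1}(h,\overrightarrow{a})})$ and $M_2(\overrightarrow{a},h)\Leftrightarrow T\vdash\neg\varphi(\overbrace{J_{n+1}(h,\overrightarrow{a})})$, as required.

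The hard part is obtaining the \emph{biconditionals} rather than only the strong-separation implications; this is what forces me to fold the provability conditions ``$T\vdash\neg E_e$'' and ``$T\vdash E_e$'' into $S_1$ and $S_2$ and to carry the construction uniformly over all candidate codes $e$, so that no recursion theorem is needed — the required fixed point is supplied automatically by $n$-Rosser, which hands back a formula whose own code $h$ is precisely the value of $e$ at which the folded-in clauses coincide with $P$ and $N$. I would also take care that $S_1,S_2$ are genuinely RE (decoding, substitution under $I$, and proof search are all unproblematic) and that the argument uses only the general form of $n$-Rosser together with the consistency of $T$.
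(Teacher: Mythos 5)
Your proof is correct and follows essentially the same route as the paper's: fold the provability conditions into the two RE relations, apply the $n$-Rosser property, and instantiate at the code $h$ of the returned formula to get the biconditionals by case analysis against consistency. If anything, your version is written more carefully, since by parametrizing the clauses with $E_e$ over all candidate codes $e$ you make explicit the uniformity that the paper's definition of $A$ and $B$ (which mentions $\varphi$ before $\varphi$ is produced) leaves implicit.
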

\begin{proof}\label{}
For any $x\in \mathbb{N}$, define $\overbrace{x}=(x, \cdots, x)\in \mathbb{N}^n$.
Define
\[A=\{\overbrace{J_{n+1}(y, \overrightarrow{x})}: M_1(\overrightarrow{x},y)\vee T\vdash \neg\varphi(\overbrace{J_{n+1}(y, \overrightarrow{x})})\}\] and
\[B=\{\overbrace{J_{n+1}(y, \overrightarrow{x})}: M_2(\overrightarrow{x},y)\vee T\vdash \varphi(\overbrace{J_{n+1}(y, \overrightarrow{x})})\}.\]
Since $T$ is $n$-Rosser, there is a formula $\varphi(\overrightarrow{x})$ of $n$-free variables with code $h$ such that $\varphi(\overrightarrow{x})$ strongly separates $A-B$ from $B-A$ in $T$ w.r.t. $I$.  Then for any $\overrightarrow{a}\in \mathbb{N}^n$,
\[[M_1(\overrightarrow{a},h)\vee T\vdash \neg\varphi(\overbrace{J_{n+1}(h, \overrightarrow{a})})]\wedge \neg [M_2(\overrightarrow{a},h)\vee T\vdash \varphi(\overbrace{J_{n+1}(h, \overrightarrow{a})})] \Rightarrow T\vdash \varphi(\overbrace{J_{n+1}(h, \overrightarrow{a})})\] and 
\[[M_2(\overrightarrow{a},h)\vee T\vdash \varphi(\overbrace{J_{n+1}(h, \overrightarrow{a})})]\wedge \neg [M_1(\overrightarrow{a},h)\vee T\vdash \neg\varphi(\overbrace{J_{n+1}(h, \overrightarrow{a})})] \Rightarrow T\vdash \neg\varphi(\overbrace{J_{n+1}(h, \overrightarrow{a})}).\]
Then we have: \[M_1(\overrightarrow{a},h)\Leftrightarrow T\vdash \varphi(\overbrace{J_{n+1}(h, \overrightarrow{a})})\] and \[M_2(\overrightarrow{a},h)\Leftrightarrow T\vdash \neg\varphi(\overbrace{J_{n+1}(h, \overrightarrow{a})}).\]
\end{proof}

\begin{theorem}\label{key improve thm}
If $T$ is $n$-Rosser and for any $h$, the $n$-ary functional $F(\overrightarrow{x})=\overbrace{J_{n+1}(h, \overrightarrow{x})}$ on $\mathbb{N}^n$ is admissible in $T$, then $T$ is exact $n$-Rosser.
\end{theorem}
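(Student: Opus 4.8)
The plan is to derive exact $n$-Rosserness directly from Lemma \ref{lemma for exact Rosser} together with the admissibility hypothesis, with essentially no new combinatorial work. Fix an arbitrary disjoint pair $(A,B)$ of $n$-ary RE relations; the goal is to produce a single formula $\theta(x_1,\dots,x_n)$ that exactly separates $A$ from $B$ in $T$ w.r.t. $I$, i.e.\ $\overrightarrow{a}\in A\Leftrightarrow T\vdash\theta(\overline{a_1}^I,\dots,\overline{a_n}^I)$ and $\overrightarrow{a}\in B\Leftrightarrow T\vdash\neg\theta(\overline{a_1}^I,\dots,\overline{a_n}^I)$.

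First I would promote $(A,B)$ to a disjoint pair of $(n+1)$-ary RE relations by simply ignoring the last coordinate: set $M_1(\overrightarrow{x},y)\triangleq \overrightarrow{x}\in A$ and $M_2(\overrightarrow{x},y)\triangleq \overrightarrow{x}\in B$. These are RE since $A,B$ are, and they are disjoint because $A\cap B=\emptyset$. Applying Lemma \ref{lemma for exact Rosser} to $M_1,M_2$ yields a formula $\varphi(\overrightarrow{x})$ of $n$ free variables with code $h$ such that for every $\overrightarrow{a}\in\mathbb{N}^n$ we have $\overrightarrow{a}\in A\Leftrightarrow T\vdash\varphi(\overbrace{J_{n+1}(h,\overrightarrow{a})})$ and $\overrightarrow{a}\in B\Leftrightarrow T\vdash\neg\varphi(\overbrace{J_{n+1}(h,\overrightarrow{a})})$. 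Next I would invoke the admissibility of the functional $F(\overrightarrow{x})=\overbrace{J_{n+1}(h,\overrightarrow{x})}$ for this particular $h$. Since its component functions satisfy $f_i(\overrightarrow{a})=J_{n+1}(h,\overrightarrow{a})$ for every $i$, admissibility applied to $\varphi$ delivers a formula $\theta(x_1,\dots,x_n)$ with $T\vdash \theta(\overline{a_1}^I,\dots,\overline{a_n}^I)\leftrightarrow\varphi(\overbrace{J_{n+1}(h,\overrightarrow{a})})$ for all $\overrightarrow{a}$. Because $T$ proves this biconditional, $T\vdash\theta(\overline{a_1}^I,\dots,\overline{a_n}^I)$ iff $T\vdash\varphi(\overbrace{J_{n+1}(h,\overrightarrow{a})})$, and likewise for the negations; chaining these two provability equivalences with the displayed biconditionals from the lemma gives exactly $\overrightarrow{a}\in A\Leftrightarrow T\vdash\theta(\overline{a_1}^I,\dots,\overline{a_n}^I)$ and $\overrightarrow{a}\in B\Leftrightarrow T\vdash\neg\theta(\overline{a_1}^I,\dots,\overline{a_n}^I)$, so $\theta$ exactly separates $A$ from $B$ and $T$ is exact $n$-Rosser.

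The conceptual content, and the only place where the hypothesis is genuinely used, is the passage from the diagonal formula $\varphi(\overbrace{J_{n+1}(h,\overrightarrow{a})})$ whose arguments all depend on $\overrightarrow{a}$ through the pairing function, to an honest formula $\theta(\overrightarrow{a})$ in the original free variables with the same provability behavior. This is precisely what admissibility of $F$ buys, and since admissibility is strictly weaker than the strong definability assumed in Theorem \ref{improve PST}, this is where the improvement comes from. The genuine self-referential difficulty, namely that the code $h$ of $\varphi$ reappears inside the very relations used to build $\varphi$, is already absorbed into Lemma \ref{lemma for exact Rosser} via the $n$-Rosser property, so no further fixed-point argument is needed at this stage. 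I expect the remaining steps to be pure plumbing; the only points requiring care are verifying that $M_1,M_2$ are honestly disjoint and RE, and noting that a single instance of admissibility for the specific $h$ produced by the lemma suffices, rather than the full quantification over all $h$ stated in the hypothesis.
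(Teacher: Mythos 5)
Your proposal is correct and follows essentially the same route as the paper's own proof: promote the disjoint pair to $(n+1)$-ary relations by ignoring the last coordinate, apply Lemma \ref{lemma for exact Rosser} to obtain the diagonal formula $\varphi$ with code $h$, and then use admissibility of $F(\overrightarrow{x})=\overbrace{J_{n+1}(h,\overrightarrow{x})}$ for that particular $h$ to convert $\varphi(\overbrace{J_{n+1}(h,\overrightarrow{a})})$ into a formula in the original variables with the same provability behavior. Your closing observations (that the self-reference is already absorbed into the lemma, and that only one instance of admissibility is needed) accurately describe what the paper does.
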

\begin{proof}\label{}
Suppose $S_1(\overrightarrow{x})$ and $S_2(\overrightarrow{x})$ are disjoint $n$-ary RE relations. Define $M_1(\overrightarrow{x},y)\triangleq  S_1(\overrightarrow{x})$  and $M_2(\overrightarrow{x},y)\triangleq S_2(\overrightarrow{x})$.

Apply Lemma \ref{lemma for exact Rosser} to $M_1(\overrightarrow{x},y)$ and $M_2(\overrightarrow{x},y)$. Then there is a formula $\varphi(\overrightarrow{x})$ of $n$-free variables with code $h$ such that for any $\overrightarrow{a}=(a_1, \cdots, a_n)\in \mathbb{N}^n, S_1(\overrightarrow{a})\Leftrightarrow T\vdash \varphi(\overbrace{J_{n+1}(h, \overrightarrow{a})})$ and $S_2(\overrightarrow{a})\Leftrightarrow T\vdash \neg\varphi(\overbrace{J_{n+1}(h, \overrightarrow{a})})$.
Since $F(\overrightarrow{x})$ is admissible in $T$,  there exists a formula $\psi(\overrightarrow{x})$ of $n$-free variables  such that $T\vdash \psi(\overline{a_1}^I, \cdots, \overline{a_n}^I)\leftrightarrow  \varphi(\overbrace{J_{n+1}(h, \overrightarrow{a})})$.
Thus, $\psi(\overrightarrow{x})$ exactly separates $S_1(\overrightarrow{x})$ from $S_2(\overrightarrow{x})$ in $T$ w.r.t. $I$. Hence, $T$ is exact $n$-Rosser.
\end{proof}

\section{Equivalences under the definability of the paring function}\label{equi under def}

In this section, we will show that, assuming that the pairing function $J_2(x,y)$ is strongly definable in the base theory, then for any $n\geq 1$, we have:
\begin{enumerate}[(1)]
  \item $n$-Rosser implies   $(n+1)$-Rosser;
  \item   exact $n$-Rosser implies  exact $(n+1)$-Rosser;
  \item  effectively $n$-Rosser implies  effectively $(n+1)$-Rosser;
  \item  effectively exact $n$-Rosser implies  effectively exact $(n+1)$-Rosser.
\end{enumerate}

\begin{theorem}\label{}
Assume that the pairing function $J_2(x,y)$ is strongly definable in $T$. Then for any $n\geq 1$, if $T$ is $n$-Rosser, then $T$ is $(n+1)$-Rosser.
\end{theorem}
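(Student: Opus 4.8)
The plan is to reduce $(n+1)$-ary separation to $n$-ary separation by folding the last two coordinates into a single coordinate via the pairing function, and then to undo this folding inside $T$ using the strong definability of $J_2$. Throughout I keep fixed the interpretation $I:{\sf Num}\unlhd T$ witnessing that $T$ is $n$-Rosser, and I claim the very same $I$ witnesses that $T$ is $(n+1)$-Rosser. Let $(M_1^{n+1}, M_2^{n+1})$ be an arbitrary pair of $(n+1)$-ary RE relations. Using the recursive inverse functions $K,L$ of $J_2$, define two $n$-ary RE relations by $\widetilde{M_i}^n(a_1,\dots,a_n)\triangleq M_i^{n+1}(a_1,\dots,a_{n-1},K(a_n),L(a_n))$ for $i=1,2$; these are RE since $M_i^{n+1}$ is RE and $K,L$ are recursive. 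Since $T$ is $n$-Rosser via $I$, there is a formula $\psi(x_1,\dots,x_n)$ that strongly separates $\widetilde{M_1}^n-\widetilde{M_2}^n$ from $\widetilde{M_2}^n-\widetilde{M_1}^n$ in $T$ w.r.t.\ $I$.

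Now I exploit the hypothesis. Let $\chi(u,v,z)$ be a formula strongly defining $J_2$ in $T$, so that for all $m,k\in\omega$ we have $T\vdash \forall z\,[\chi(\overline{m}^I,\overline{k}^I,z)\leftrightarrow z=\overline{J_2(m,k)}^I]$. Define the candidate formula
\[
\phi(x_1,\dots,x_{n+1})\triangleq \exists z\,[\chi(x_n,x_{n+1},z)\wedge \psi(x_1,\dots,x_{n-1},z)],
\]
which has free variables among $x_1,\dots,x_{n+1}$ (padding with conjuncts $x_i=x_i$ if necessary to make exactly $n+1$ of them occur). By strong definability of $J_2$, the witness $z$ is provably unique and equal to $\overline{J_2(a_n,a_{n+1})}^I$, so for every $(a_1,\dots,a_{n+1})\in\mathbb{N}^{n+1}$ we obtain the provable equivalence
\[
T\vdash \phi(\overline{a_1}^I,\dots,\overline{a_{n+1}}^I)\leftrightarrow \psi(\overline{a_1}^I,\dots,\overline{a_{n-1}}^I,\overline{J_2(a_n,a_{n+1})}^I).
\]

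The key bookkeeping is that $K(J_2(a_n,a_{n+1}))=a_n$ and $L(J_2(a_n,a_{n+1}))=a_{n+1}$, so $\widetilde{M_i}^n(a_1,\dots,a_{n-1},J_2(a_n,a_{n+1}))\Leftrightarrow M_i^{n+1}(a_1,\dots,a_{n+1})$. Hence $(a_1,\dots,a_{n+1})\in M_1^{n+1}-M_2^{n+1}$ if and only if $(a_1,\dots,a_{n-1},J_2(a_n,a_{n+1}))\in \widetilde{M_1}^n-\widetilde{M_2}^n$, and likewise for the opposite difference. Combining this with the strong-separation property of $\psi$ and the provable equivalence above gives: if $(a_1,\dots,a_{n+1})\in M_1^{n+1}-M_2^{n+1}$ then $T\vdash\psi(\dots,\overline{J_2(a_n,a_{n+1})}^I)$ and so $T\vdash\phi(\overline{a_1}^I,\dots,\overline{a_{n+1}}^I)$; and symmetrically for $M_2^{n+1}-M_1^{n+1}$ with $\neg\phi$. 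Thus $\phi$ strongly separates $M_1^{n+1}-M_2^{n+1}$ from $M_2^{n+1}-M_1^{n+1}$ in $T$ w.r.t.\ $I$, proving $T$ is $(n+1)$-Rosser.

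The only genuinely load-bearing step is the provable equivalence in the second paragraph: I expect this to be the main point, since it is exactly where \emph{strong} definability of $J_2$ is needed rather than mere recursiveness. We need the $I$-numeral $\overline{J_2(a_n,a_{n+1})}^I$ to be provably the unique $z$ satisfying $\chi(\overline{a_n}^I,\overline{a_{n+1}}^I,z)$, so that the existential quantifier in $\phi$ can be eliminated in favour of $\psi$ evaluated at the paired argument inside $T$; recursiveness alone would only give the external decoding identities but no $T$-internal control. The remaining steps (closure of $\widetilde{M_i}^n$ under RE, the decoding identities for $K,L$, and matching up the difference sets) are routine.
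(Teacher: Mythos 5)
Your proposal is correct and follows essentially the same route as the paper: fold the last two coordinates into one via $K,L$ to get $n$-ary RE relations, separate those using $n$-Rosserness, and then define $\phi(x_1,\dots,x_{n+1})=\exists z\,[\chi(x_n,x_{n+1},z)\wedge\psi(x_1,\dots,x_{n-1},z)]$, using strong definability of $J_2$ to prove $T\vdash\phi(\overline{a_1}^I,\dots,\overline{a_{n+1}}^I)\leftrightarrow\psi(\overline{a_1}^I,\dots,\overline{a_{n-1}}^I,\overline{J_2(a_n,a_{n+1})}^I)$. Your identification of the provable equivalence as the load-bearing step matches the paper's argument exactly.
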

\begin{proof}\label{}
Suppose $T$ is $n$-Rosser with $I: {\sf Num}\unlhd T$. We show $T$ is $(n+1)$-Rosser.
Let $M^{n+1}_1$ and $M^{n+1}_2$ be two $(n+1)$-ary RE relations. It suffices to find a formula $\phi(x_1,\cdots, x_{n+1})$ such that $\phi(x_1,\cdots, x_{n+1})$ strongly separates $M^{n+1}_1-M^{n+1}_2$ from $M^{n+1}_2-M^{n+1}_1$ in $T$ w.r.t. $I$.

Define $n$-ary relations $Q_1$ and $Q_2$ as follows where $K$ and $L$ are recursive functions with the property that $K(J(a,b))=a$ and $L(J(a,b))=b$:
\begin{align*}
Q_1(a_1, \cdots, a_n)\Leftrightarrow M^{n+1}_1(a_1, \cdots, a_{n-1}, Ka_n, La_n);\\
Q_2(a_1, \cdots, a_n)\Leftrightarrow M^{n+1}_2(a_1, \cdots, a_{n-1}, Ka_n, La_n).
\end{align*}

Let $A(x_1, \cdots, x_n)$ be the formula that strongly separates $Q_1-Q_2$ from $Q_2-Q_1$ in $T$ w.r.t. $I$. Let $\theta(x,y,z)$ be the formula that strongly defines the pairing function $J_2(x,y)$ in $T$. Define $\phi(x_1,\cdots, x_{n+1})\triangleq\exists z(\theta(x_n, x_{n+1},z)\wedge A(x_1, \cdots, x_{n-1}, z))$.

\begin{claim}
The formula $\phi(x_1,\cdots, x_{n+1})$ strongly separates $M^{n+1}_1-M^{n+1}_2$ from $M^{n+1}_2-M^{n+1}_1$ in $T$ w.r.t. $I$.
\end{claim}
\begin{proof}\label{}
Suppose $(a_1, \cdots, a_{n+1})\in M^{n+1}_1-M^{n+1}_2$. Then $(a_1, \cdots, a_{n-1}, J(a_n, a_{n+1}))\in Q_1-Q_2$. Thus, $T\vdash A(\overline{a_1}^I, \cdots, \overline{a_{n-1}}^I, \overline{J(a_n, a_{n+1})}^I)$.
Note that $\phi(\overline{a_1}^I,\cdots, \overline{a_{n+1}}^I)$ is just $\exists z(\theta(\overline{a_n}^I,\overline{a_{n+1}}^I,z)\wedge A(\overline{a_1}^I, \cdots, \overline{a_{n-1}}^I,z))$. Thus, we have  $T\vdash \phi(\overline{a_1}^I,\cdots, \overline{a_{n+1}}^I)$ since $T\vdash \phi(\overline{a_1}^I,\cdots, \overline{a_{n+1}}^I)\leftrightarrow A(\overline{a_1}^I, \cdots, \overline{a_{n-1}}^I, \overline{J(a_n, a_{n+1})}^I)$.

Similarly, if $(a_1, \cdots, a_{n+1})\in M^{n+1}_2-M^{n+1}_1$, then $T\vdash \neg\phi(\overline{a_1}^I,\cdots, \overline{a_{n+1}}^I)$.
\end{proof}

Thus, $T$ is $(n+1)$-Rosser w.r.t. $I$.
\end{proof}

By a similar argument, we can show that:
\begin{theorem}\label{}
Assume that the pairing function $J_2(x,y)$ is strongly definable in $T$. Then for any $n\geq 1$, if $T$ is exact $n$-Rosser, then $T$ is exact $(n+1)$-Rosser.
\end{theorem}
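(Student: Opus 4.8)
The plan is to mimic the proof of the immediately preceding theorem, which handled the $n$-Rosser case, and adapt it to the exact setting by tracking biconditionals instead of one-directional implications. Given that $T$ is exact $n$-Rosser via some interpretation $I\colon {\sf Num}\unlhd T$, I want to show $T$ is exact $(n+1)$-Rosser via the same $I$. So let $M^{n+1}_1$ and $M^{n+1}_2$ be a \emph{disjoint} pair of $(n+1)$-ary RE relations, and I must produce a single formula $\phi(x_1,\dots,x_{n+1})$ that \emph{exactly} separates $M^{n+1}_1$ from $M^{n+1}_2$ in $T$ w.r.t. $I$.

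First I would compress the last two arguments into one via the pairing function, exactly as before: define the $n$-ary RE relations $Q_1,Q_2$ by $Q_k(a_1,\dots,a_n)\Leftrightarrow M^{n+1}_k(a_1,\dots,a_{n-1},Ka_n,La_n)$ for $k=1,2$. The key bijectivity point is that since $J_2$ is a bijection (with inverses $K,L$), the map $(a_1,\dots,a_{n+1})\mapsto(a_1,\dots,a_{n-1},J(a_n,a_{n+1}))$ is a bijection between $\mathbb{N}^{n+1}$ and $\mathbb{N}^n$, and it carries $M^{n+1}_k$ onto $Q_k$ in a membership-preserving way in \emph{both} directions: $(a_1,\dots,a_{n+1})\in M^{n+1}_k \Leftrightarrow (a_1,\dots,a_{n-1},J(a_n,a_{n+1}))\in Q_k$. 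Disjointness of $Q_1,Q_2$ follows from disjointness of $M^{n+1}_1,M^{n+1}_2$ under this bijection, so I may legitimately invoke the exact $n$-Rosser hypothesis: there is a formula $A(x_1,\dots,x_n)$ that exactly separates $Q_1$ from $Q_2$ in $T$ w.r.t. $I$.

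Then I would set $\phi(x_1,\dots,x_{n+1})\triangleq \exists z\,(\theta(x_n,x_{n+1},z)\wedge A(x_1,\dots,x_{n-1},z))$, where $\theta$ strongly defines $J_2$ in $T$. The crucial identity, provable from strong definability of $J_2$, is that $T\vdash \phi(\overline{a_1}^I,\dots,\overline{a_{n+1}}^I)\leftrightarrow A(\overline{a_1}^I,\dots,\overline{a_{n-1}}^I,\overline{J(a_n,a_{n+1})}^I)$ for every tuple; this reduces every provability question about $\phi$ to the corresponding one about $A$. Now for the exact direction, I would chain biconditionals: $(a_1,\dots,a_{n+1})\in M^{n+1}_1 \Leftrightarrow (a_1,\dots,a_{n-1},J(a_n,a_{n+1}))\in Q_1 \Leftrightarrow T\vdash A(\overline{a_1}^I,\dots,\overline{a_{n-1}}^I,\overline{J(a_n,a_{n+1})}^I) \Leftrightarrow T\vdash \phi(\overline{a_1}^I,\dots,\overline{a_{n+1}}^I)$, and symmetrically with negation for $M^{n+1}_2$.

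The only place requiring genuine care — and what I expect to be the main obstacle — is the $\Leftarrow$ (``only the pairs actually reached are provable'') direction, which is exactly what the $n$-Rosser proof did \emph{not} need. In the strong-separation proof one only needed implications, so the existential quantifier in $\phi$ and the totality/functionality of $\theta$ were harmless; here I must ensure that $T\vdash \phi(\overline{a_1}^I,\dots,\overline{a_{n+1}}^I)$ genuinely forces membership, which depends on the provable equivalence above holding in the strong (biconditional) form, and this is precisely where strong definability of $J_2$ (as opposed to mere definability) is used: it guarantees $T\vdash \forall z\,(\theta(\overline{a_n}^I,\overline{a_{n+1}}^I,z)\leftrightarrow z=\overline{J(a_n,a_{n+1})}^I)$, so the witness $z$ is provably unique and equal to the genuine pairing value. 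With that equivalence in hand, the backward directions transfer cleanly from the exactness of $A$, and consistency of $T$ is not even needed beyond what the exact $n$-Rosser hypothesis already supplies. I would then conclude that $\phi$ exactly separates $M^{n+1}_1$ from $M^{n+1}_2$, so $T$ is exact $(n+1)$-Rosser.
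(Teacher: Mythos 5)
Your proposal is correct and is exactly the adaptation the paper intends: the paper proves the $n$-Rosser case in detail and then states this exact version with only the remark ``by a similar argument,'' and your argument is that similar argument spelled out — same compression of the last two coordinates via $J_2$, same formula $\phi(x_1,\dots,x_{n+1})\triangleq\exists z(\theta(x_n,x_{n+1},z)\wedge A(x_1,\dots,x_{n-1},z))$, upgraded to biconditionals using the $T$-provable equivalence $\phi(\overline{a_1}^I,\dots,\overline{a_{n+1}}^I)\leftrightarrow A(\overline{a_1}^I,\dots,\overline{a_{n-1}}^I,\overline{J(a_n,a_{n+1})}^I)$ supplied by strong definability. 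You also correctly identify the only point where exactness demands more than strong separation, namely the backward directions, which the provable equivalence handles.
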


\begin{theorem}\label{}
Assume that the  pairing function $J_2(x,y)$ is strongly definable in $T$. Then if $T$ is effectively $n$-Rossoer, then $T$ is effectively $(n+1)$-Rosser.
\end{theorem}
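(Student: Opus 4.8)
The plan is to mimic the proof of the previous theorem, keeping track of effectiveness at every step via the s-m-n theorem. Suppose $T$ is effectively $n$-Rosser under a recursive function $f(i,j)$ and an interpretation $I:{\sf Num}\unlhd T$, and let $\theta(x,y,z)$ be the formula strongly defining $J_2$ in $T$. I want to produce a recursive function $g(i,j)$ witnessing that $T$ is effectively $(n+1)$-Rosser.

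First I would recursively uncurry the last coordinate. Fix recursive projections $K,L$ with $K(J(a,b))=a$ and $L(J(a,b))=b$. By the s-m-n theorem there is a recursive function $p$ such that for every index $i$ and every $(a_1,\ldots,a_n)\in\mathbb{N}^n$,
\[
R^n_{p(i)}(a_1,\ldots,a_n)\Leftrightarrow R^{n+1}_i(a_1,\ldots,a_{n-1},Ka_n,La_n).
\]
Thus $R^n_{p(i)}$ plays the role of the relation $Q_1$ (and $R^n_{p(j)}$ the role of $Q_2$) from the proof of the previous theorem, uniformly in the index. Feeding these uncurried indices into $f$, the code $f(p(i),p(j))$ is a formula $A(x_1,\ldots,x_n)$ that strongly separates $R^n_{p(i)}-R^n_{p(j)}$ from $R^n_{p(j)}-R^n_{p(i)}$ in $T$ w.r.t. $I$. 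Since the passage from the code of $A$ to the code of
\[
\phi(x_1,\ldots,x_{n+1})\triangleq\exists z\,(\theta(x_n,x_{n+1},z)\wedge A(x_1,\ldots,x_{n-1},z))
\]
is a purely syntactic substitution, there is a recursive function $c$ sending the code of $A$ to the code of $\phi$. I would then set $g(i,j)\triangleq c(f(p(i),p(j)))$, which is recursive as a composition of recursive functions.

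It then remains to check that $g(i,j)$ codes a formula strongly separating $R^{n+1}_i-R^{n+1}_j$ from $R^{n+1}_j-R^{n+1}_i$ in $T$ w.r.t. $I$; this verification is identical to the Claim in the previous theorem. For instance, if $(a_1,\ldots,a_{n+1})\in R^{n+1}_i-R^{n+1}_j$, then $(a_1,\ldots,a_{n-1},J(a_n,a_{n+1}))\in R^n_{p(i)}-R^n_{p(j)}$, so $T\vdash A(\overline{a_1}^I,\ldots,\overline{a_{n-1}}^I,\overline{J(a_n,a_{n+1})}^I)$, and because $\theta$ strongly defines $J_2$ we have $T\vdash\phi(\overline{a_1}^I,\ldots,\overline{a_{n+1}}^I)\leftrightarrow A(\overline{a_1}^I,\ldots,\overline{a_{n-1}}^I,\overline{J(a_n,a_{n+1})}^I)$, yielding $T\vdash\phi(\overline{a_1}^I,\ldots,\overline{a_{n+1}}^I)$; the refutation case is symmetric. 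Hence $T$ is effectively $(n+1)$-Rosser under $g$ and $I$.

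The conceptual content is already contained in the previous theorem; the only genuinely new point is that the two ingredients used there — uncurrying the last argument by the pairing function and prefixing the defining formula $\theta$ — are both \emph{uniform} in the indices, so they assemble into a single recursive function $g$. Accordingly, I expect the main (and fairly minor) obstacle to be the bookkeeping needed to invoke the s-m-n theorem cleanly, so as to justify that $p$ and the syntactic operation $c$ are recursive; once effectiveness is established, the separation argument is a verbatim repetition of the non-effective case and requires no new idea.
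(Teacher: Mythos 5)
Your proposal is correct and follows essentially the same route as the paper's proof: the paper likewise uses the s-m-n theorem to get a recursive uncurrying function $t$ (your $p$), a recursive syntactic map sending the code of $A(x_1,\ldots,x_n)$ to the code of $\exists y\,[A(x_1,\ldots,x_{n-1},y)\wedge\theta(x_n,x_{n+1},y)]$ (your $c$), and then composes them with $f$ exactly as you do. The verification of strong separation is also the same.
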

\begin{proof}\label{}

Let $f$ be the witness function for $T$ being effectively $n$-Rosser with $I: {\sf Num}\unlhd T$, i.e., for any $i,j\in\omega$, $f(i,j)$ codes a formula with $n$ free variables which strong separates $R^n_i-R^n_j$ from $R^n_j-R^n_i$ in $T$ w.r.t. $I$.

Let $\theta(x,y,z)$ be the formula which strongly defines the pairing function $J_2(x,y)$ in $T$.
Take a recursive function $g$ such that it maps the code of $\phi(x_1, \cdots, x_n)$ to the code of $\exists y[\phi(x_1, \cdots, x_{n-1}, y)\wedge \theta(x_{n}, x_{n+1},y)]$.

Note that by s-m-n theorem, there exists a recursive function $t(x)$ such that for any $i\in\omega$, \[(a_1, \cdots, a_{n})\in R^{n}_{t(i)}\Leftrightarrow (a_1, \cdots, a_{n-1}, Ka_{n}, La_{n})\in R^{n+1}_i.\]

Define $h(i,j)=g(f(t(i), t(j)))$. Note that $h$ is recursive. We show that for any $(n+1)$-ary RE relations $R^{n+1}_i$ and $R^{n+1}_j$, $h(i,j)$ codes a formula with $(n+1)$-free variables that strongly separates $R^{n+1}_i-R^{n+1}_j$ from $R^{n+1}_j-R^{n+1}_i$ in $T$ w.r.t. $I$.

Note that $f(t(i), t(j))$ codes a formula with $n$-free variables which strongly separates $R^{n}_{t(i)}-R^{n}_{t(j)}$ from $R^{n}_{t(j)}-R^{n}_{t(i)}$ in $T$ w.r.t. $I$. Let $f(t(i), t(j))\triangleq \ulcorner \phi(x_1, \cdots, x_n)\urcorner$. Then $h(i,j)=\ulcorner \psi(x_1, \cdots, x_{n+1})  \urcorner$ where $\psi(x_1, \cdots, x_{n+1})=\exists y[\phi(x_1, \cdots, x_{n-1}, y)\wedge \theta(x_{n}, x_{n+1},y)]$.

\begin{claim}
The formula $\psi(x_1, \cdots, x_{n+1})$ strongly separates $R^{n+1}_i-R^{n+1}_j$ from $R^{n+1}_j-R^{n+1}_i$ in $T$ w.r.t. $I$.
\end{claim}
\begin{proof}\label{}

Suppose $(a_1, \cdots, a_{n+1})\in R^{n+1}_i-R^{n+1}_j$. Then $(a_1, \cdots, a_{n-1}, J(a_n, a_{n+1}))\in R^{n}_{t(i)}-R^{n}_{t(j)}$.
 Then $T\vdash \phi(\overline{a_1}^I, \cdots, \overline{a_{n-1}}^I, \overline{J(a_n, a_{n+1})}^I)$.

Note that $T\vdash \exists y[\phi(\overline{a_1}^I, \cdots, \overline{a_{n-1}}^I, y)\wedge \theta(\overline{a_{n}}^I, \overline{a_{n+1}}^I,y)]\leftrightarrow  \phi(\overline{a_1}^I, \cdots, \overline{a_{n-1}}^I, \overline{J(a_n, a_{n+1})}^I)$.
Thus, $T\vdash \psi(\overline{a_1}^I, \cdots, \overline{a_{n+1}}^I)$.

Similarly, if $(a_1, \cdots, a_{n+1})\in R^{n+1}_j-R^{n+1}_i$, then $T\vdash \neg \psi(\overline{a_1}^I, \cdots, \overline{a_{n+1}}^I)$.
\end{proof}

Thus, $T$ is effectively $(n+1)$-Rosser under  $h$ and $I$.

\end{proof}

As a corollary of Theorem \ref{effective versus effective exact},  assuming the pairing function $J_2(x,y)$ is strongly definable in $T$, if $T$ is effectively exact $n$-Rossoer, then $T$ is effectively exact $(n+1)$-Rosser.

%It is a long open question whether 1-Rosser implies exact 1-Rosser. We show that if the pairing function $J_2(x,y)$ is strongly definable, then they are equivalent.

\begin{corollary}\label{rela among four hier}
If the pairing function $J_2(x,y)$ is strongly definable in $T$, then for any $n\geq 1$, the following are equivalent:
\begin{enumerate}[(1)]
  \item $T$ is $n$-Rosser;
  \item $T$ is effectively $n$-Rosser;
  \item  $T$ is exact $n$-Rosser;
  \item $T$ is effectively exact $n$-Rosser.
\end{enumerate}
\end{corollary}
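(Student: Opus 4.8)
The plan is to arrange the four notions into a single cycle of implications $(1)\Rightarrow(2)\Rightarrow(4)\Rightarrow(3)\Rightarrow(1)$, so that all four become equivalent. Three of these four arrows are already available from earlier results without any hypothesis on the pairing function, and only the first arrow, $n$-Rosser $\Rightarrow$ effectively $n$-Rosser, genuinely uses the assumption that $J_2(x,y)$ is strongly definable in $T$.

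First I would collect the unconditional implications. By Theorem \ref{effective exact}, effectively $n$-Rosser implies effectively exact $n$-Rosser, which gives $(2)\Rightarrow(4)$. By Fact \ref{basic fact on n-Rosser}(5), effectively exact $n$-Rosser implies exact $n$-Rosser, giving $(4)\Rightarrow(3)$. By Fact \ref{basic fact on n-Rosser}(2), exact $n$-Rosser implies $n$-Rosser, giving $(3)\Rightarrow(1)$. Thus the only implication left to establish is $(1)\Rightarrow(2)$, and everything else is already in place.

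For $(1)\Rightarrow(2)$ I would invoke the pairing assumption through the ``going up one level'' theorem proved earlier in this section: if $J_2$ is strongly definable in $T$ and $T$ is $n$-Rosser, then $T$ is $(n+1)$-Rosser. Composing this with Theorem \ref{rosser imply effec rosser}, which says that $(n+1)$-Rosser implies effectively $n$-Rosser, yields the chain $n$-Rosser $\Rightarrow$ $(n+1)$-Rosser $\Rightarrow$ effectively $n$-Rosser, i.e.\ $(1)\Rightarrow(2)$. This closes the cycle through all four nodes and, since each node implies the next and the cycle returns to its start, forces all four notions to be equivalent.

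The crux of the argument is precisely this step $(1)\Rightarrow(2)$: unconditionally we only know (via Fact \ref{basic fact on n-Rosser}(3)) that effectively $n$-Rosser implies $n$-Rosser, and there is no direct route from a plain $n$-Rosser hypothesis back up to an effective (uniform) one at the same level. The role of the strong definability of the pairing function is exactly to buy the one extra coordinate needed to climb to level $n+1$; once there, the uniformity supplied by Theorem \ref{rosser imply effec rosser} can be harvested and pushed back down to level $n$. I expect no other obstacle, as all remaining steps are bookkeeping with the implications already recorded in Fact \ref{basic fact on n-Rosser} and Theorem \ref{effective exact}.
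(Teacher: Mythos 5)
Your proof is correct and takes essentially the same route as the paper: the only non-trivial step in both is $n$-Rosser $\Rightarrow$ $(n+1)$-Rosser (using the strong definability of $J_2$) followed by Theorem \ref{rosser imply effec rosser} to come back down to effectively $n$-Rosser, with the rest supplied by Theorem \ref{effective exact} and the trivial implications of Fact \ref{basic fact on n-Rosser}. Your arrangement of these facts into a single cycle $(1)\Rightarrow(2)\Rightarrow(4)\Rightarrow(3)\Rightarrow(1)$ is a slightly tidier bookkeeping than the paper's, but the substance is identical.
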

\begin{proof}\label{}
Follows from the following facts: $n$-Rosser $\Rightarrow (n+1)$-Rosser $\Rightarrow$ effectively $n$-Rosser; $n$-Rosser $\Rightarrow (n+1)$-Rosser $\Rightarrow$ exact $n$-Rosser; and effectively $n$-Rosser $\Leftrightarrow$ effectively exact $n$-Rosser.
\end{proof}

In summary, we have: 
\begin{theorem}\label{}
If the paring function $J_2(x,y)$ is strongly definable in $T$, then the following are
equivalent:
\begin{enumerate}[(1)]
  \item $T$ is Rosser;
  \item $T$ is $n$-Rosser for some $n\geq 1$;
  \item $T$ is effectively $n$-Rosser for some $n\geq 1$;
  \item $T$ is exact $n$-Rosser for some $n\geq 1$;
  \item $T$ is effectively exact $n$-Rosser for some $n\geq 1$.
\end{enumerate}
\end{theorem}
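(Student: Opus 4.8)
The plan is to reduce the whole statement to two already-established ingredients: Corollary \ref{rela among four hier}, which (under the standing hypothesis that $J_2$ is strongly definable in $T$) shows that for each \emph{fixed} $n\geq 1$ the four notions $n$-Rosser, effectively $n$-Rosser, exact $n$-Rosser and effectively exact $n$-Rosser coincide; and the upward monotonicity theorem of this section, namely that the strong definability of $J_2$ makes $n$-Rosser imply $(n+1)$-Rosser. The only genuinely new work is to bridge the existential notion ``$n$-Rosser for some $n$'' with the universal notion ``Rosser'' (i.e.\ $n$-Rosser for all $n$); everything else is bookkeeping with the quantifier ``for some $n$''.

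First I would dispose of (1)$\Leftrightarrow$(2). The direction (1)$\Rightarrow$(2) is immediate, since a Rosser theory is in particular $1$-Rosser. For (2)$\Rightarrow$(1), suppose $T$ is $n_0$-Rosser for some $n_0\geq 1$. Applying the upward monotonicity theorem repeatedly, via a straightforward induction on $m$, yields that $T$ is $m$-Rosser for every $m\geq n_0$. For the finitely many $m$ with $1\leq m<n_0$, Fact \ref{basic fact on n-Rosser}(1) gives that $n_0$-Rosser implies $m$-Rosser. Hence $T$ is $m$-Rosser for all $m\geq 1$, which is exactly the statement that $T$ is Rosser. This establishes (1)$\Leftrightarrow$(2).

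Next I would handle the equivalences among (2), (3), (4) and (5). Here Corollary \ref{rela among four hier} does all the work: for any fixed $n$, being $n$-Rosser, effectively $n$-Rosser, exact $n$-Rosser and effectively exact $n$-Rosser are all equivalent. Thus if one of (2)--(5) holds --- say $T$ is exact $n$-Rosser for some $n$ --- then for that same $n$ all four of the fixed-$n$ notions hold, so each of (2)--(5) holds. Chaining these gives (2)$\Leftrightarrow$(3)$\Leftrightarrow$(4)$\Leftrightarrow$(5), and combined with (1)$\Leftrightarrow$(2) the full cycle of equivalences follows.

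I do not expect a serious obstacle, since the heavy lifting was done in Corollary \ref{rela among four hier} and in the monotonicity theorems of this section. The one point requiring care is to keep the existential quantifier aligned: one must apply the fixed-$n$ equivalences of Corollary \ref{rela among four hier} with the \emph{same} witnessing $n$ across the four notions (2)--(5), rather than mixing different values of $n$, and one must combine the downward step (Fact \ref{basic fact on n-Rosser}(1)) with the upward monotonicity step so that a single witness $n_0$ propagates to all $m\geq 1$.
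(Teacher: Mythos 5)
Your proposal is correct and follows essentially the same route as the paper, which presents this theorem as a summary of the preceding results of the section: the upward monotonicity theorems (under strong definability of $J_2$), Fact \ref{basic fact on n-Rosser}(1) for the downward direction, and Corollary \ref{rela among four hier} for the fixed-$n$ equivalences. Your explicit care about keeping a single witnessing $n$ and propagating it to all $m\geq 1$ is exactly the bookkeeping the paper leaves implicit.
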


The study of the generalized hierarchy of $n$-Rosser theories, exact $n$-Rosser theories, effectively $n$-Rosser theories and effectively exact $n$-Rosser theories, which has been pursued in this paper, also leads to some new insights in the understanding of formal systems. Let us take two examples. Firstly, it is well known that the theory $\mathbf{R}$ is Rosser for RE sets in the literature. In this paper, we have shown that the theory $\mathbf{R}$ is effectively exact $n$-Rosser for any $n\geq 1$, which tells us more information about the theory $\mathbf{R}$. Secondly, at first sight, the notion of effectively exact $n$-Rosser is stronger than the notion of $n$-Rosser. By Theorem \ref{rela among four hier}, if the pairing function $J_2(x,y)$ is strongly definable in $T$, then ``$T$ is $n$-Rosser" is equivalent with ``$T$ is effectively exact $n$-Rosser". If  the pairing function $J_2(x,y)$ is not strongly definable in a theory, this theory must be very weak. For all natural mathematical theories we know, the pairing function $J_2(x,y)$ is  strongly definable in them. Thus, for natural mathematical theories, there is no difference between the notion of $n$-Rosser and the notion of effectively exact $n$-Rosser.

We conclude the paper with a question.

\begin{question}\label{}
Does $1$-Rosser imply exact $1$-Rosser? Generally, does $n$-Rosser imply exact $n$-Rosser?
\end{question}

%\begin{enumerate}[(A)]
  %\item Could we find a 1-Rosser theory $T$ such that $T$ is strictly weaker than $\mathbf{R}$?
  %\item Is there a minimal n-Rosser theory w.r.t. interpretation?
%\end{enumerate}

Since $2$-Rosser implies exact $1$-Rosser and effectively $1$-Rosser is equivalent with effectively exact $1$-Rosser,  if $1$-Rosser does not imply exact $1$-Rosser, then $1$-Rosser does not imply $2$-Rosser, and $1$-Rosser does not imply effectively $1$-Rosser.

\appendix

\section{The second proof of Theorem \ref{semi-DU is DU}}\label{1st proof}

In this Appendix, we first give a proof of Theorem \ref{SDRT}, then we give a second proof of Theorem \ref{semi-DU is DU}.

We first give a proof of Theorem \ref{SDRT} as follows. Given two  $(n+2m+2)$-ary RE relations 
 $M_1(\overrightarrow{x}, \overrightarrow{y_1}, \overrightarrow{y_2}, z_1,z_2)$ and  $M_2(\overrightarrow{x}, \overrightarrow{y_1}, \overrightarrow{y_2}, z_1,z_2)$, we show that there are $2m$-ary recursive functions $t_1(\overrightarrow{y_1}, \overrightarrow{y_2})$ and $t_2(\overrightarrow{y_1}, \overrightarrow{y_2})$ such that for any $\overrightarrow{y_1}, \overrightarrow{y_2}\in \mathbb{N}^m$,
\begin{enumerate}[(1)]
  \item $\overrightarrow{x}\in R^n_{t_1(\overrightarrow{y_1}, \overrightarrow{y_2})}\Leftrightarrow M_1(\overrightarrow{x}, \overrightarrow{y_1}, \overrightarrow{y_2}, t_1(\overrightarrow{y_1}, \overrightarrow{y_2}), t_2(\overrightarrow{y_1}, \overrightarrow{y_2}))$;
  \item $\overrightarrow{x}\in R^n_{t_2(\overrightarrow{y_1}, \overrightarrow{y_2})}\Leftrightarrow M_2(\overrightarrow{x}, \overrightarrow{y_1}, \overrightarrow{y_2}, t_1(\overrightarrow{y_1}, \overrightarrow{y_2}), t_2(\overrightarrow{y_1}, \overrightarrow{y_2}))$.
\end{enumerate}

Let $a$ be an index of $M_1(\overrightarrow{x}, \overrightarrow{y_1}, \overrightarrow{y_2}, z_1,z_2)$ and $b$ be an index of $M_2(\overrightarrow{x}, \overrightarrow{y_1}, \overrightarrow{y_2}, z_1,z_2)$.

\begin{claim}
There is a $(2m+3)$-ary recursive function $f(z,z_1,z_2, \overrightarrow{y_1}, \overrightarrow{y_2})$ such that for any $z, z_1,z_2\in\omega$ and $\overrightarrow{y_1}, \overrightarrow{y_2}\in \mathbb{N}^m$, we have:
\[\overrightarrow{x}\in R^n_{f(z,z_1,z_2, \overrightarrow{y_1}, \overrightarrow{y_2})}\Leftrightarrow R^{n+2m+2}_z(\overrightarrow{x}, \overrightarrow{y_1}, \overrightarrow{y_2}, f(z_1,z_1,z_2, \overrightarrow{y_1}, \overrightarrow{y_2}), f(z_2,z_1,z_2, \overrightarrow{y_1}, \overrightarrow{y_2})).\]
\end{claim}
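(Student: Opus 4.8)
The plan is to obtain $f$ by a single application of the s-m-n theorem followed by a diagonal fixed-point step, exactly paralleling Smullyan's proof of the unparametrized version in \cite{Smullyan93}. Throughout, write $\Phi_u$ for the $(2m+3)$-ary partial recursive function with index $u$, computed by the universal machine uniformly in $u$.

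First I would set up the partial recursive relation that encodes the desired self-referential definition. Regarding $\overrightarrow{x}$ as the $n$ principal arguments and $(u, z, z_1, z_2, \overrightarrow{y_1}, \overrightarrow{y_2})$ as parameters, consider the relation that holds iff $R^{n+2m+2}_z(\overrightarrow{x}, \overrightarrow{y_1}, \overrightarrow{y_2}, \Phi_u(z_1, z_1, z_2, \overrightarrow{y_1}, \overrightarrow{y_2}), \Phi_u(z_2, z_1, z_2, \overrightarrow{y_1}, \overrightarrow{y_2}))$. Since the listing $\langle R^{n+2m+2}_z\rangle$ is uniformly RE and $\Phi_u$ is computed uniformly in $u$, this is a partial recursive relation in all of its arguments. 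By the s-m-n theorem there is a total recursive function $d(u, z, z_1, z_2, \overrightarrow{y_1}, \overrightarrow{y_2})$ which, for each fixed choice of the parameters, is an $n$-ary index for the section in $\overrightarrow{x}$; that is, $\overrightarrow{x}\in R^n_{d(u, z, z_1, z_2, \overrightarrow{y_1}, \overrightarrow{y_2})}$ iff $R^{n+2m+2}_z(\overrightarrow{x}, \overrightarrow{y_1}, \overrightarrow{y_2}, \Phi_u(z_1, z_1, z_2, \overrightarrow{y_1}, \overrightarrow{y_2}), \Phi_u(z_2, z_1, z_2, \overrightarrow{y_1}, \overrightarrow{y_2}))$.

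Next I would break the circularity by a fixed-point argument. Using s-m-n once more, fix a total recursive function $e$ with $\Phi_{e(u)}(z, z_1, z_2, \overrightarrow{y_1}, \overrightarrow{y_2}) = d(u, z, z_1, z_2, \overrightarrow{y_1}, \overrightarrow{y_2})$ for all arguments. By the recursion theorem there is an index $u^\ast$ with $\Phi_{u^\ast} = \Phi_{e(u^\ast)}$, hence $\Phi_{u^\ast}(z, z_1, z_2, \overrightarrow{y_1}, \overrightarrow{y_2}) = d(u^\ast, z, z_1, z_2, \overrightarrow{y_1}, \overrightarrow{y_2})$ identically. I then set $f := \Phi_{u^\ast}$. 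Because $d$ is total, $f$ agrees with the total function $d(u^\ast, \cdot)$ and is therefore total recursive. Substituting $u = u^\ast$ into the defining equivalence for $d$ and replacing $\Phi_{u^\ast}$ by $f$ on the right-hand side yields precisely $\overrightarrow{x}\in R^n_{f(z, z_1, z_2, \overrightarrow{y_1}, \overrightarrow{y_2})}$ iff $R^{n+2m+2}_z(\overrightarrow{x}, \overrightarrow{y_1}, \overrightarrow{y_2}, f(z_1, z_1, z_2, \overrightarrow{y_1}, \overrightarrow{y_2}), f(z_2, z_1, z_2, \overrightarrow{y_1}, \overrightarrow{y_2}))$, which is the claim.

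The main obstacle is the genuine self-reference: the value of $f$ on one input is specified through the values of $f$ on two other inputs, which cannot be handled by s-m-n alone. The device that resolves it is the recursion theorem, and the one point needing care is that the two recursive calls $f(z_1, z_1, z_2, \overrightarrow{y_1}, \overrightarrow{y_2})$ and $f(z_2, z_1, z_2, \overrightarrow{y_1}, \overrightarrow{y_2})$ must both be realized through the single fixed index $u^\ast$, so that after diagonalization both appeals to $f$ refer to the same total function; one must also verify totality of $f$, which holds because the index it computes comes from the total function $d$. Once the claim is in hand, the theorem follows by letting $a, b$ be indices of $M_1, M_2$, taking $z_1 = a$ and $z_2 = b$, and defining $t_1(\overrightarrow{y_1}, \overrightarrow{y_2}) = f(a, a, b, \overrightarrow{y_1}, \overrightarrow{y_2})$ and $t_2(\overrightarrow{y_1}, \overrightarrow{y_2}) = f(b, a, b, \overrightarrow{y_1}, \overrightarrow{y_2})$.
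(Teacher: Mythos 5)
Your proof is correct, and the underlying idea is the same diagonalization as in the paper; the difference is in how the self-reference is realized. You route it through Kleene's recursion theorem: after using s-m-n to get a total recursive $d(u, z, z_1, z_2, \overrightarrow{y_1}, \overrightarrow{y_2})$ indexing the $\overrightarrow{x}$-section of the relation built from $\Phi_u$, you take a fixed point $u^\ast$ with $\Phi_{u^\ast} = \Phi_{e(u^\ast)} = d(u^\ast, \cdot)$ and set $f = \Phi_{u^\ast}$. The paper instead unrolls the diagonalization by hand in Smullyan's style: it obtains by s-m-n a total recursive $g(z, z_1, z_2, \overrightarrow{y_1}, \overrightarrow{y_2}, s)$ whose value indexes the $\overrightarrow{x}$-section of $R^{n+2m+4}_s$ at the given parameters, lets $h$ be an index of the relation $R^{n+2m+2}_z(\overrightarrow{x}, \overrightarrow{y_1}, \overrightarrow{y_2}, g(z_1, z_1, z_2, \overrightarrow{y_1}, \overrightarrow{y_2}, s), g(z_2, z_1, z_2, \overrightarrow{y_1}, \overrightarrow{y_2}, s))$ in all of $(\overrightarrow{x}, z, z_1, z_2, \overrightarrow{y_1}, \overrightarrow{y_2}, s)$, and defines $f(\cdots) = g(\cdots, h)$, so the self-reference comes from instantiating the free index variable $s$ at $h$ itself. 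The paper's version keeps every function total by construction and needs no appeal to the recursion theorem (a point the author cares about elsewhere in the appendices); your version is shorter to state but requires the two extra checks you correctly flag, namely that the relation built from the partial function $\Phi_u$ is still uniformly RE and that the resulting $f$ is total because it coincides with $d(u^\ast, \cdot)$. Your closing derivation of the theorem from the claim, taking $t_1(\overrightarrow{y_1}, \overrightarrow{y_2}) = f(a, a, b, \overrightarrow{y_1}, \overrightarrow{y_2})$ and $t_2(\overrightarrow{y_1}, \overrightarrow{y_2}) = f(b, a, b, \overrightarrow{y_1}, \overrightarrow{y_2})$, matches the paper exactly.
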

\begin{proof}\label{}
By s-m-n theorem, there exists a $(2m+4)$-ary recursive function $g(z,z_1,z_2, \overrightarrow{y_1}, \overrightarrow{y_2},s)$ such that $\overrightarrow{x}\in R^n_{g(z,z_1,z_2, \overrightarrow{y_1}, \overrightarrow{y_2},s)}$ iff  $R^{n+2m+4}_s(\overrightarrow{x}, z,z_1,z_2, \overrightarrow{y_1}, \overrightarrow{y_2},s)$. Let $h$ be an index of the following relation on $(\overrightarrow{x}, z,z_1,z_2, \overrightarrow{y_1}, \overrightarrow{y_2},s)$:
\[R^{n+2m+2}_z(\overrightarrow{x}, \overrightarrow{y_1}, \overrightarrow{y_2}, g(z_1,z_1,z_2, \overrightarrow{y_1}, \overrightarrow{y_2}, s), g(z_2,z_1,z_2, \overrightarrow{y_1}, \overrightarrow{y_2}, s)).\]

Define $f(z,z_1,z_2, \overrightarrow{y_1}, \overrightarrow{y_2})\triangleq g(z,z_1,z_2, \overrightarrow{y_1}, \overrightarrow{y_2},h)$. Then:
\begin{align*}
 \overrightarrow{x}\in R^n_{f(z,z_1,z_2, \overrightarrow{y_1}, \overrightarrow{y_2})}  &\Leftrightarrow R^{n+2m+4}_h(\overrightarrow{x}, z,z_1,z_2, \overrightarrow{y_1}, \overrightarrow{y_2},h) \\
      &\Leftrightarrow R^{n+2m+2}_z(\overrightarrow{x}, \overrightarrow{y_1}, \overrightarrow{y_2}, f(z_1,z_1,z_2, \overrightarrow{y_1}, \overrightarrow{y_2}), f(z_2,z_1,z_2, \overrightarrow{y_1}, \overrightarrow{y_2})).
\end{align*}
\end{proof}

Define $2m$-ary functions $t_1(\overrightarrow{y_1}, \overrightarrow{y_2})\triangleq f(a,a,b, \overrightarrow{y_1}, \overrightarrow{y_2})$ and $t_2(\overrightarrow{y_1}, \overrightarrow{y_2})\triangleq f(b,a,b, \overrightarrow{y_1}, \overrightarrow{y_2})$. Then we have:
\begin{align*}
 \overrightarrow{x}\in R^n_{t_1(\overrightarrow{y_1}, \overrightarrow{y_2})} &\Leftrightarrow R^{n+2m+2}_a(\overrightarrow{x}, \overrightarrow{y_1}, \overrightarrow{y_2}, t_1(\overrightarrow{y_1}, \overrightarrow{y_2}), t_2(\overrightarrow{y_1}, \overrightarrow{y_2}))\\
      &\Leftrightarrow M_1(\overrightarrow{x}, \overrightarrow{y_1}, \overrightarrow{y_2}, t_1(\overrightarrow{y_1}, \overrightarrow{y_2}), t_2(\overrightarrow{y_1}, \overrightarrow{y_2}));\\
 \overrightarrow{x}\in R^n_{t_2(\overrightarrow{y_1}, \overrightarrow{y_2})} &\Leftrightarrow R^{n+2m+2}_b(\overrightarrow{x}, \overrightarrow{y_1}, \overrightarrow{y_2}, t_1(\overrightarrow{y_1}, \overrightarrow{y_2}), t_2(\overrightarrow{y_1}, \overrightarrow{y_2}))\\
      &\Leftrightarrow M_2(\overrightarrow{x}, \overrightarrow{y_1}, \overrightarrow{y_2}, t_1(\overrightarrow{y_1}, \overrightarrow{y_2}), t_2(\overrightarrow{y_1}, \overrightarrow{y_2})).
\end{align*}
This finish the proof of Theorem \ref{SDRT}.

Now, we give a second proof of Theorem \ref{semi-DU is DU}.
The notions of $\sf KP, \sf CEI$ and $\sf DG$ are introduced in \cite{Smullyan93} for RE sets.  In this section, we first introduce the notions of $\sf KP, \sf CEI$ and $\sf DG$ for a disjoint pair  of $n$-ary RE relations via the notion of $n$-ary functionals. Then we prove that for any disjoint pair  of $n$-ary RE relations with $n\geq 1$,  Semi-$\sf DU\Rightarrow \sf KP\Rightarrow \sf CEI\Rightarrow \sf DG\Rightarrow DU$. As a corollary, Semi-$\sf DU$ implies $\sf DU$.

We first introduce the notion of \emph{$\sf KP$} for a disjoint pair  of $n$-ary RE relations.
\begin{definition}\label{}
Let $(A,B)$  be  a disjoint pair  of $n$-ary RE relations. We say $(A,B)$ is \emph{$\sf KP$} if there exists a recursive $n$-ary  functional $F(x,y)=(f_1(x,y), \cdots, f_n(x,y))$ on $\mathbb{N}^2$ such that for any $x, y\in\omega$,
\begin{enumerate}[(i)]
  \item  $F(x,y)\in R^n_y-R^n_x \Rightarrow F(x,y)\in A$;
  \item $F(x,y)\in R^n_x-R^n_y \Rightarrow F(x,y)\in B$.
\end{enumerate}
\end{definition}

Now we construct a disjoint pair  of $n$-ary RE relations which is $\sf KP$.

\begin{theorem}\label{comparison lemma}
There is an $(n+2)$-ary RE relation $B(\overrightarrow{x},y,z)$ (which we read $\overrightarrow{x}\in R^n_y$ before $\overrightarrow{x}\in R^n_z$) such that for any $i,j\in\omega$, we have:
\begin{enumerate}[(1)]
  \item $\{\overrightarrow{x}: B(\overrightarrow{x},i,j)\}\cap \{\overrightarrow{x}: B(\overrightarrow{x},j,i)\}=\emptyset$.
  \item $R^n_i-R^n_j\subseteq\{\overrightarrow{x}: B(\overrightarrow{x},i,j)\}$ and $R^n_j-R^n_i\subseteq\{\overrightarrow{x}: B(\overrightarrow{x},j,i)\}$.
  \item If $R^n_i$ and $R^n_j$ are disjoint, then $R^n_i=\{\overrightarrow{x}: B(\overrightarrow{x},i,j)\}$ and $R^n_j=\{\overrightarrow{x}: B(\overrightarrow{x},j,i)\}$.
\end{enumerate}
\end{theorem}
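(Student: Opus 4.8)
The goal is to construct an $(n+2)$-ary RE relation $B(\overrightarrow{x},y,z)$ that records which of $R^n_y$ or $R^n_z$ enumerates $\overrightarrow{x}$ first. This is the natural generalization of Smullyan's priority/``comparison'' construction from RE sets to $n$-ary RE relations, and my plan is to carry out exactly that construction with the enumeration stages indexed by a single counter and the tuple $\overrightarrow{x}$ playing the role of the element.

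The plan is as follows. Fix an acceptable enumeration of $n$-ary RE relations, so that ``$\overrightarrow{x}\in R^n_y$ at stage $s$'' is a recursive predicate in $(\overrightarrow{x},y,s)$. I would then define
\[
B(\overrightarrow{x},y,z) \Longleftrightarrow \exists s\,\bigl[\overrightarrow{x}\in R^n_y \text{ at stage } s \;\wedge\; \forall t\le s\,(\overrightarrow{x}\notin R^n_z \text{ at stage } t)\bigr],
\]
i.e. $B(\overrightarrow{x},y,z)$ holds iff $\overrightarrow{x}$ shows up in $R^n_y$ at some stage $s$ with no appearance of $\overrightarrow{x}$ in $R^n_z$ at any stage $\le s$. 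The matrix is recursive and the relation is defined by a single existential quantifier over $s$, so $B$ is RE, as required. (Intuitively: $B(\overrightarrow{x},y,z)$ says that $\overrightarrow{x}$ enters $R^n_y$ strictly before it enters $R^n_z$, with ties resolved in favor of the first coordinate.)

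With this definition the three clauses are routine verifications. For (1): if both $B(\overrightarrow{x},i,j)$ and $B(\overrightarrow{x},j,i)$ held, then $\overrightarrow{x}$ would enter $R^n_i$ at some stage $s_0$ with no $R^n_j$-appearance at any $t\le s_0$, and simultaneously enter $R^n_j$ at some stage $s_1$ with no $R^n_i$-appearance at any $t\le s_1$; comparing $s_0$ and $s_1$ yields a contradiction in either case, so the two relations are disjoint. For (2): if $\overrightarrow{x}\in R^n_i - R^n_j$, then $\overrightarrow{x}$ appears in $R^n_i$ at some stage $s$ but never appears in $R^n_j$ at all, so in particular not at any $t\le s$, whence $B(\overrightarrow{x},i,j)$; this gives $R^n_i-R^n_j\subseteq\{\overrightarrow{x}:B(\overrightarrow{x},i,j)\}$, and symmetrically for the other inclusion. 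For (3): if $R^n_i$ and $R^n_j$ are disjoint, then $R^n_i = R^n_i - R^n_j$ and $R^n_j = R^n_j - R^n_i$, so clause (2) gives the inclusions $\subseteq$; conversely $B(\overrightarrow{x},i,j)$ forces $\overrightarrow{x}\in R^n_i$ directly from the definition (the stage-$s$ appearance witnesses $\overrightarrow{x}\in R^n_i$), giving the reverse inclusions. Hence equality holds in both cases.

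I do not expect any genuine obstacle here: the only point requiring care is the bookkeeping of the stage comparison (strict-versus-nonstrict inequalities and the tie-breaking convention) so that the two relations $\{\overrightarrow{x}:B(\overrightarrow{x},i,j)\}$ and $\{\overrightarrow{x}:B(\overrightarrow{x},j,i)\}$ come out exactly disjoint rather than merely almost disjoint. The passage from the one-dimensional case in \cite{Smullyan93} to arbitrary arity $n$ is transparent because $\overrightarrow{x}$ is treated as an atomic object throughout and the enumeration is measured by a single integer stage parameter; no new idea beyond replacing the element by a tuple is needed.
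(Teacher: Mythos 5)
Your construction is exactly the paper's: the paper takes a recursive $P(\overrightarrow{x},y,s)$ with $\overrightarrow{x}\in R^n_y\Leftrightarrow\exists s\,P(\overrightarrow{x},y,s)$ and sets $B(\overrightarrow{x},y,z)\triangleq\exists s\,[P(\overrightarrow{x},y,s)\wedge\forall t\leq s\,\neg P(\overrightarrow{x},z,t)]$, leaving (1)--(3) as ``easy to check,'' which you verify correctly. The only quibble is your parenthetical that ties are ``resolved in favor of the first coordinate'': with the non-strict bound $t\leq s$ a simultaneous first appearance puts $\overrightarrow{x}$ in neither set, but this is harmless for all three clauses.
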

\begin{proof}\label{}
Since the relation $\overrightarrow{x}\in R^n_y$ is RE, there is a recursive $(n+2)$-ary $\Delta^0_0$ relation $P(\overrightarrow{x}, y,z)$ such that $\overrightarrow{x}\in R^n_y\Leftrightarrow \exists z P(\overrightarrow{x}, y,z)$.

Define $B(\overrightarrow{x},y,z)\triangleq \exists s [P(\overrightarrow{x}, y,s)\wedge \forall t\leq s\neg P(\overrightarrow{x}, z,t)]$ which says that $\overrightarrow{x}\in R^n_y$ before $\overrightarrow{x}\in R^n_z$. Note that $B(\overrightarrow{x},y,z)$  is an $(n+2)$-ary RE relation. It is easy to check that properties (1)-(3) hold.
\end{proof}

\begin{proposition}\label{eg of KP}
There exists a   pair  of $n$-ary RE relations which is $\sf KP$.
\end{proposition}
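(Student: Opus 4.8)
The plan is to build the $\sf KP$ pair by diagonalizing against the comparison relation $B(\overrightarrow{x}, y, z)$ supplied by Theorem \ref{comparison lemma}. The key idea is to code a pair of indices $(x,y)$ into a single point of $\mathbb{N}^n$ from which both indices can be recovered recursively, so that the comparison relation can be invoked on that very point with its own coded indices. Concretely, I would take the recursive $n$-ary functional $F(x,y) = \overbrace{J_2(x,y)}$, i.e. the constant vector all of whose coordinates equal $J_2(x,y)$; each component is the recursive function $J_2(x,y)$, so $F$ is a recursive $n$-ary functional on $\mathbb{N}^2$. Writing $w = J_2(x,y)$, the diagonal point $\overbrace{w}$ determines $w$ as the common value of its coordinates, and hence $x = K(w)$ and $y = L(w)$.

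I would then define the candidate pair $(U_1, U_2)$ of $n$-ary RE relations by restricting the comparison relation to diagonal points:
\[ U_1(\overrightarrow{u}) \triangleq \exists w\,[\,\overrightarrow{u} = \overbrace{w} \wedge B(\overrightarrow{u}, L(w), K(w))\,], \qquad U_2(\overrightarrow{u}) \triangleq \exists w\,[\,\overrightarrow{u} = \overbrace{w} \wedge B(\overrightarrow{u}, K(w), L(w))\,]. \]
Both are RE, because $B$ is RE and $K, L$ are recursive, and because the predicate ``$\overrightarrow{u} = \overbrace{w}$'' is decidable and pins down $w$ uniquely. Disjointness of $(U_1, U_2)$ would follow immediately from clause (1) of Theorem \ref{comparison lemma}: for a diagonal point $\overbrace{w}$, clause (1) applied with $i = L(w)$ and $j = K(w)$ gives that $\overbrace{w}$ cannot simultaneously satisfy $B(\overbrace{w}, L(w), K(w))$ and $B(\overbrace{w}, K(w), L(w))$.

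Finally I would verify the two $\sf KP$ clauses for the functional $F$. Suppose $F(x,y) \in R^n_y - R^n_x$; with $w = J_2(x,y)$ this reads $\overbrace{w} \in R^n_{L(w)} - R^n_{K(w)}$, so clause (2) of Theorem \ref{comparison lemma} with $i = L(w), j = K(w)$ yields $B(\overbrace{w}, L(w), K(w))$, whence $F(x,y) \in U_1$. Symmetrically, $F(x,y) \in R^n_x - R^n_y$ gives $\overbrace{w} \in R^n_{K(w)} - R^n_{L(w)}$, so clause (2) yields $B(\overbrace{w}, K(w), L(w))$ and thus $F(x,y) \in U_2$. These are precisely conditions (i) and (ii) of the definition of $\sf KP$, so $(U_1, U_2)$ is a $\sf KP$ pair.

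The step needing the most care is making the coding self-referential in the right way: the single point $F(x,y)$ must carry within itself the two indices $x,y$ to be compared, so that membership of that point in $U_1$ or $U_2$ can be decided by the comparison relation applied to those recovered indices. Matching the two ``before'' directions to the correct clauses of the $\sf KP$ definition — pairing $R^n_y$-priority with $U_1$ and $R^n_x$-priority with $U_2$ — is the only genuine subtlety; everything else reduces to a routine check of RE-ness and of the inclusions furnished by Theorem \ref{comparison lemma}.
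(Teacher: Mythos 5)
Your proposal is correct and follows essentially the same route as the paper: the paper's proof defines exactly the pair $K_1=\{\overbrace{x}: B(\overbrace{x},Lx,Kx)\}$, $K_2=\{\overbrace{x}: B(\overbrace{x},Kx,Lx)\}$ (your $U_1,U_2$ in slightly more explicit form), uses the same functional $F(x,y)=\overbrace{J_2(x,y)}$, and verifies the two $\sf KP$ clauses via clause (2) of Theorem \ref{comparison lemma} just as you do. Your explicit check of disjointness via clause (1) is a small addition the paper leaves implicit, but it does not change the argument.
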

\begin{proof}\label{}
Recall that for any $x \in \mathbb{N}$, $\overbrace{x}$ denotes $(x, \cdots, x)\in \mathbb{N}^n$.  Recall the $(n+2)$-ary RE relation $B(\overrightarrow{x}, y,z)$ in Theorem \ref{comparison lemma}. Define:
\[K_1=\{\overbrace{x}\in \mathbb{N}^n: B(\overbrace{x}, Lx, Kx)\}; K_2=\{\overbrace{x}\in \mathbb{N}^n: B(\overbrace{x}, Kx, Lx)\}.\]
Define $F(x,y)=\overbrace{J_2(x,y)}$. Note that $F(x,y)$ is a recursive $n$-ary functional on $\mathbb{N}^2$. We show that $(K_1,K_2)$ is $\sf KP$ under $F(x,y)$.
For any $x, y\in\omega$, by Theorem \ref{comparison lemma}, $F(x,y)\in R^n_{y}-R^n_{x}\Rightarrow \overbrace{J_2(x,y)}\in K_1\Rightarrow F(x,y)\in K_1$. Similarly, we have $F(x,y)\in R^n_{x}-R^n_{y}\Rightarrow F(x,y)\in K_2$.
\end{proof}

Now we show that semi-$\sf DU$ implies $\sf KP$.

\begin{theorem}\label{semi DU to KP}
Let $(A,B)$  be  a disjoint pair  of $n$-ary RE relations. If $(A,B)$ is semi-$\sf DU$, then $(A,B)$ is $\sf KP$.
\end{theorem}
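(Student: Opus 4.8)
The plan is to transfer the $\sf KP$ property from the concrete pair $(K_1,K_2)$ of Proposition \ref{eg of KP} to $(A,B)$, by composing the witness of $(K_1,K_2)$ with a semi-reduction and using the Strong Double Recursion Theorem \ref{SDRT} to repair the mismatch between the two membership conditions. First I would record that $(K_1,K_2)$ is a \emph{disjoint} pair of $n$-ary RE relations: if $\overbrace{x}\in K_1\cap K_2$, then both $B(\overbrace{x},Lx,Kx)$ and $B(\overbrace{x},Kx,Lx)$ hold, contradicting Theorem \ref{comparison lemma}(1). Hence, since $(A,B)$ is semi-$\sf DU$, Definition \ref{notions about DU} yields a recursive semi-reduction $\sigma=(\sigma_1,\ldots,\sigma_n)$ from $(K_1,K_2)$ to $(A,B)$; that is, $\overrightarrow{a}\in K_1\Rightarrow \sigma(\overrightarrow{a})\in A$ and $\overrightarrow{a}\in K_2\Rightarrow \sigma(\overrightarrow{a})\in B$. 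Recall also from Proposition \ref{eg of KP} that $F_0(p,q)=\overbrace{J_2(p,q)}$ witnesses $\sf KP$ for $(K_1,K_2)$, so $\overbrace{J_2(p,q)}\in R^n_q-R^n_p\Rightarrow \overbrace{J_2(p,q)}\in K_1$ and $\overbrace{J_2(p,q)}\in R^n_p-R^n_q\Rightarrow \overbrace{J_2(p,q)}\in K_2$.

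The naive candidate $H(x,y)=\sigma(\overbrace{J_2(x,y)})$ fails, because the $\sf KP$ hypothesis for $(K_1,K_2)$ concerns membership of $\overbrace{J_2(x,y)}$ in $R^n_x,R^n_y$, whereas the conclusion we want concerns membership of the \emph{output} point $H(x,y)$ in $R^n_x,R^n_y$. The key idea is to use double recursion to choose \emph{new} indices $u(x,y),v(x,y)$ so that membership of the auxiliary point $\overbrace{J_2(u,v)}$ in $R^n_u,R^n_v$ exactly mirrors membership of $\sigma(\overbrace{J_2(u,v)})$ in $R^n_x,R^n_y$. Concretely, I would apply Theorem \ref{SDRT} with $m=1$ to the $(n+4)$-ary RE relations
\[M_1(\overrightarrow{x},y_1,y_2,z_1,z_2)\triangleq [\overrightarrow{x}=\overbrace{J_2(z_1,z_2)}]\wedge [\sigma(\overbrace{J_2(z_1,z_2)})\in R^n_{y_1}],\]
\[M_2(\overrightarrow{x},y_1,y_2,z_1,z_2)\triangleq [\overrightarrow{x}=\overbrace{J_2(z_1,z_2)}]\wedge [\sigma(\overbrace{J_2(z_1,z_2)})\in R^n_{y_2}],\]
whose first conjunct simply localizes each relation to the single point $\overbrace{J_2(z_1,z_2)}$. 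This produces recursive $t_1(y_1,y_2),t_2(y_1,y_2)$, which I rename $u(x,y),v(x,y)$ (setting $y_1=x$, $y_2=y$). Substituting $\overrightarrow{x}=\overbrace{J_2(u,v)}$ into the two fixed-point equations gives exactly the mirroring equivalences $\overbrace{J_2(u,v)}\in R^n_u\Leftrightarrow \sigma(\overbrace{J_2(u,v)})\in R^n_x$ and $\overbrace{J_2(u,v)}\in R^n_v\Leftrightarrow \sigma(\overbrace{J_2(u,v)})\in R^n_y$.

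Finally I would set $H(x,y)\triangleq \sigma(\overbrace{J_2(u(x,y),v(x,y))})$, a recursive $n$-ary functional on $\mathbb{N}^2$, and verify the two $\sf KP$ clauses. If $H(x,y)\in R^n_y-R^n_x$, the mirroring equivalences give $\overbrace{J_2(u,v)}\in R^n_v-R^n_u$; applying the $\sf KP$ property of $(K_1,K_2)$ to $F_0$ at the pair $(u,v)$ yields $\overbrace{J_2(u,v)}\in K_1$, and then the semi-reduction gives $\sigma(\overbrace{J_2(u,v)})=H(x,y)\in A$. The case $H(x,y)\in R^n_x-R^n_y$ is symmetric, yielding $\overbrace{J_2(u,v)}\in K_2$ and $H(x,y)\in B$, so $H$ witnesses that $(A,B)$ is $\sf KP$. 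The main obstacle is precisely the self-reference built into $M_1,M_2$ — the very indices $u,v$ to be constructed occur inside the relations $R^n_u,R^n_v$ whose defining conditions we are writing down — and this is exactly what Theorem \ref{SDRT} resolves; the remaining work is just the bookkeeping of which index plays the role of $R^n_x$ versus $R^n_y$.
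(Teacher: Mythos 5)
Your proof is correct, and the overall skeleton matches the paper's: both start from the concrete {\sf KP} pair $(K_1,K_2)$ of Proposition \ref{eg of KP} together with a semi-reduction from it to $(A,B)$, and both must then repair the mismatch between ``the auxiliary point lies in $R^n_u,R^n_v$'' and ``the output point lies in $R^n_x,R^n_y$.'' Where you genuinely diverge is in how that mismatch is repaired. The paper proves the transfer lemma ``if $(C,D)$ is {\sf KP} and semi-reducible to $(A,B)$, then $(A,B)$ is {\sf KP}'' by a single application of the s-m-n theorem: it takes preimage indices $t(y)$ with $\overrightarrow{x}\in R^n_{t(y)}\Leftrightarrow G(\overrightarrow{x})\in R^n_y$ and sets $H(x,y)=G(F(t(x),t(y)))$, so that membership of $H(x,y)$ in $R^n_y-R^n_x$ translates directly into membership of $F(t(x),t(y))$ in $R^n_{t(y)}-R^n_{t(x)}$. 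You instead invoke the Strong Double Recursion Theorem \ref{SDRT} to manufacture self-referential indices $u,v$ whose extensions are localized singletons mirroring the membership of $\sigma(\overbrace{J_2(u,v)})$ in $R^n_x,R^n_y$; the verification then goes through exactly as you describe (and your preliminary check that $K_1\cap K_2=\emptyset$ via Theorem \ref{comparison lemma}(1) is a detail worth making explicit). The trade-off: the paper's s-m-n argument is lighter --- no recursion theorem at all, which matters since the paper advertises this appendix proof as recursion-theorem-free --- and it isolates a reusable transfer lemma that works for \emph{any} {\sf KP} pair semi-reducible to $(A,B)$; your argument buys nothing extra here and ties the construction to the specific diagonal pair $(K_1,K_2)$, but it is a correct and instructive illustration of how Theorem \ref{SDRT} can substitute for the preimage-index trick.
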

\begin{proof}\label{}
Let $(C,D)$  be  a disjoint pair  of $n$-ary RE relations. By Proposition \ref{eg of KP}, it suffices to show  if  $(C,D)$  is $\sf KP$ and $(C,D)$ is semi-reducible to $(A,B)$, then $(A,B)$ is $\sf KP$.

Suppose $(C,D)$  is $\sf KP$ under a recursive $n$-ary functional $F(x,y)=(f_1(x,y), \cdots, f_n(x,y))$ on $\mathbb{N}^2$, and $G(\overrightarrow{x})=(g_1(\overrightarrow{x}), \cdots, g_n(\overrightarrow{x}))$ is a $n$-ary recursive functional on $\mathbb{N}^n$ and $G(\overrightarrow{x})$ is a semi-reduction from $(C,D)$ to $(A,B)$. By s-m-n theorem, there exists a recursive function $t(y)$ such that for any $\overrightarrow{x}\in \mathbb{N}^n$, \[\overrightarrow{x}\in R^n_{t(y)} \Leftrightarrow (g_1(\overrightarrow{x}), \cdots, g_n(\overrightarrow{x}))\in R^n_{y}.\]

Define $h_i(x,y)=$ $g_i(f_1(t(x),t(y)), \cdots, f_n(t(x),t(y)))$ for $1\leq i\leq n$.
\begin{claim}
$(A,B)$ is $\sf KP$ under $H(x,y)=(h_1(x,y), \cdots, h_n(x,y))$.
\end{claim}
\begin{proof}\label{}
Note that $H(x,y)$ is a $n$-ary recursive functional on $\mathbb{N}^2$, and
\begin{align*}
 H(x,y)\in R^n_{y}-R^n_{x} &\Rightarrow (f_1(t(x),t(y)), \cdots, f_n(t(x),t(y)))\in R^n_{t(y)}-R^n_{t(x)} \\
      &\Rightarrow F(t(x),t(y))\in C \\
      &  \Rightarrow H(x,y)\in A.
\end{align*}
Similarly, we can show that if $H(x,y)\in R^n_{x}-R^n_{y}$, then $H(x,y)\in B$.
Thus, $(A,B)$ is $\sf KP$.
\end{proof}
\end{proof}

Now we introduce the notions of \emph{$\sf CEI$} and \emph{$\sf DG$}.
\begin{definition}\label{}
Let $(A,B)$ be a disjoint pair  of $n$-ary RE relations.
\begin{enumerate}[(1)]
  \item  We say $(A,B)$ is \emph{$\sf CEI$} if there exists a recursive $n$-ary functional \[F(x,y)=(f_1(x,y), \cdots, f_n(x,y))\] on $\mathbb{N}^2$ such that for any $x,y\in\omega$, if $A\subseteq R^n_x$ and $B\subseteq R^n_y$, then
      \[F(x,y)\in R^n_x \Leftrightarrow F(x,y)\in R^n_y.\]
  \item We say $(A,B)$ is \emph{$\sf DG$} if there exists a recursive $n$-ary functional \[F(x,y)=(f_1(x,y), \cdots, f_n(x,y))\] on $\mathbb{N}^2$ such that for any $x,y\in\omega$, if $R^n_x\cap R^n_y=\emptyset$, then $F(x,y)\in A \Leftrightarrow F(x,y)\in R^n_x$ and $F(x,y)\in B \Leftrightarrow F(x,y)\in R^n_y$.
\end{enumerate}
\end{definition}

Now we show that $\sf KP$ implies $\sf CEI$.

\begin{proposition}\label{KP to CEI}
Let $(A,B)$ be a disjoint pair  of $n$-ary RE relations.  If $(A,B)$ is $\sf KP$, then $(A,B)$ is $\sf CEI$.
\end{proposition}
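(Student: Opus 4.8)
The plan is to show that the very same recursive $n$-ary functional that witnesses $\sf KP$ also witnesses $\sf CEI$; no new construction is needed. So I would begin by taking $F(x,y)=(f_1(x,y), \cdots, f_n(x,y))$ on $\mathbb{N}^2$ to be a functional for which properties (i) and (ii) in the definition of $\sf KP$ hold, and then argue that $F$ satisfies the defining condition of $\sf CEI$.

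The core of the argument is a short two-sided contradiction. Fix $x,y\in\omega$ and assume $A\subseteq R^n_x$ and $B\subseteq R^n_y$; I must establish $F(x,y)\in R^n_x \Leftrightarrow F(x,y)\in R^n_y$. First I would rule out $F(x,y)\in R^n_x - R^n_y$: if this held, then by $\sf KP$(ii) we would get $F(x,y)\in B$, and since $B\subseteq R^n_y$ this forces $F(x,y)\in R^n_y$, contradicting $F(x,y)\notin R^n_y$. Symmetrically, I would rule out $F(x,y)\in R^n_y - R^n_x$: if this held, then by $\sf KP$(i) we would get $F(x,y)\in A\subseteq R^n_x$, contradicting $F(x,y)\notin R^n_x$. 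Since neither of the two ``asymmetric'' cases can occur, $F(x,y)$ lies in $R^n_x$ exactly when it lies in $R^n_y$, which is precisely the $\sf CEI$ condition.

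I do not expect a genuine obstacle here: the implication is immediate once one observes that the two containment hypotheses $A\subseteq R^n_x$ and $B\subseteq R^n_y$ built into the definition of $\sf CEI$ are exactly what is required to turn the $\sf KP$ conclusions ``$F(x,y)\in A$'' and ``$F(x,y)\in B$'' into membership in $R^n_x$ and $R^n_y$ respectively. The only point demanding any care is keeping the roles of $x$ and $y$ (and correspondingly of $A,B$) aligned correctly in the two cases, so that $\sf KP$(i) is paired with $A\subseteq R^n_x$ and $\sf KP$(ii) with $B\subseteq R^n_y$. Having done this, the proof is complete, and it also fits the advertised chain Semi-$\sf DU\Rightarrow \sf KP\Rightarrow \sf CEI\Rightarrow \sf DG\Rightarrow \sf DU$.
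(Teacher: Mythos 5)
Your proposal is correct and is essentially identical to the paper's own proof: both use the same witnessing functional $F(x,y)$ and rule out the two asymmetric cases $F(x,y)\in R^n_x-R^n_y$ and $F(x,y)\in R^n_y-R^n_x$ by combining $\sf KP$(ii) with $B\subseteq R^n_y$ and $\sf KP$(i) with $A\subseteq R^n_x$, respectively. Nothing further is needed.
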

\begin{proof}\label{}
Suppose $(A,B)$ is $\sf KP$ under a recursive $n$-ary functional $F(x,y)$ on $\mathbb{N}^2$. We show $(A,B)$ is $\sf CEI$ under $F(x,y)$.

Suppose $A\subseteq R^n_x$ and $B\subseteq R^n_y$. Since $F(x,y)\in R^n_y-R^n_x \Rightarrow F(x,y)\in A\Rightarrow F(x,y)\in R^n_x$, we have $F(x,y)\notin R^n_y-R^n_x$. Since $F(x,y)\in R^n_x-R^n_y \Rightarrow F(x,y)\in B\Rightarrow F(x,y)\in R^n_y$, we have
$F(x,y)\notin R^n_x-R^n_y$. Thus, we have
\[F(x,y)\in R^n_x \Leftrightarrow F(x,y)\in R^n_y.\]
\end{proof}

Now we show that $\sf CEI$ implies $\sf DG$.

\begin{proposition}
Let $(A,B)$ be a disjoint pair  of $n$-ary RE relations.  If $(A,B)$ is $\sf CEI$, then $(A,B)$ is $\sf DG$.
\end{proposition}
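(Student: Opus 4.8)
The plan is to imitate the standard $\sf CEI$-to-$\sf DG$ argument: fold $A$ and $B$ into the given indices via the s-m-n theorem, apply $\sf CEI$ to the resulting pair to get a single biconditional, and then extract from it the two componentwise biconditionals that $\sf DG$ demands. Suppose $(A,B)$ is $\sf CEI$ via a recursive $n$-ary functional $F(x,y)$ on $\mathbb{N}^2$. First I would use the s-m-n theorem to produce recursive functions $p(y)$ and $q(x)$ with $R^n_{p(y)} = A \cup R^n_y$ and $R^n_{q(x)} = B \cup R^n_x$; these indices exist because $A$ and $B$ are fixed $n$-ary RE relations while $R^n_y$ and $R^n_x$ are uniformly RE. I would then define the candidate $\sf DG$-witness by $G(x,y) = F(p(y), q(x))$, which is again a recursive $n$-ary functional on $\mathbb{N}^2$ by composition.

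Next, fix $x,y$ with $R^n_x \cap R^n_y = \emptyset$ and abbreviate $u = G(x,y)$. Since $A \subseteq R^n_{p(y)}$ and $B \subseteq R^n_{q(x)}$ hold by construction (irrespective of the disjointness assumption), the defining property of $\sf CEI$ applies to the pair $(p(y), q(x))$ and yields
\[u \in R^n_{p(y)} \Leftrightarrow u \in R^n_{q(x)}, \quad\text{i.e.,}\quad u \in A \cup R^n_y \Leftrightarrow u \in B \cup R^n_x.\]

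Finally, I would read off the two $\sf DG$ biconditionals from this single biconditional using the two disjointness facts $A \cap B = \emptyset$ (the pair $(A,B)$ is disjoint) and $R^n_x \cap R^n_y = \emptyset$ (the hypothesis). For instance, if $u \in A$ then the left disjunction holds, so $u \in B \cup R^n_x$; as $u \notin B$, this forces $u \in R^n_x$. Conversely, if $u \in R^n_x$ then the right disjunction holds, so $u \in A \cup R^n_y$, and as $u \notin R^n_y$ we get $u \in A$. This gives $u \in A \Leftrightarrow u \in R^n_x$, and the symmetric chase (starting from $u \in B$, respectively $u \in R^n_y$) gives $u \in B \Leftrightarrow u \in R^n_y$. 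Hence $G(x,y)$ witnesses that $(A,B)$ is $\sf DG$.

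The step I expect to require the most care is choosing \emph{which} of $A$, $B$ to attach to \emph{which} index. Pairing $A$ with $R^n_y$ in the first slot of $F$ and $B$ with $R^n_x$ in the second slot is exactly what makes the cross-cancellation land on $u \in A \Leftrightarrow u \in R^n_x$; the naive choice $R^n_{p} = A \cup R^n_x$ and $R^n_{q} = B \cup R^n_y$ produces the \emph{transposed} conclusion $u \in A \Leftrightarrow u \in R^n_y$, which does not witness $\sf DG$. Everything else is routine: the s-m-n construction of $p$, $q$ and the four-case disjointness argument.
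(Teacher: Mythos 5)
Your proof is correct and follows essentially the same route as the paper: fold $A$ and $B$ into the given indices via s-m-n, apply $\sf CEI$ to obtain the single biconditional $u\in A\cup R^n_y \Leftrightarrow u\in B\cup R^n_x$, and extract the two $\sf DG$ biconditionals from the disjointness of $A,B$ and of $R^n_x,R^n_y$. The only cosmetic difference is how the necessary ``crossing'' is arranged: the paper first observes that $(B,A)$ is $\sf CEI$ and then feeds $R^n_x\cup B$ and $R^n_y\cup A$ into that functional in the standard order, whereas you keep the functional for $(A,B)$ and swap which index goes into which slot --- the two devices are interchangeable.
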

\begin{proof}\label{}
It is easy to check that if $(A,B)$ is $\sf CEI$, then $(B,A)$ is also $\sf CEI$. Suppose $(B,A)$ is $\sf CEI$ under a recursive $n$-ary functional $F(x,y)=(f_1(x,y), \cdots, f_n(x,y))$ on $\mathbb{N}^2$, i.e., for any $x,y\in\omega$, if $B\subseteq R^n_x$ and $A\subseteq R^n_y$, then
\begin{equation}\label{key eq}
F(x,y)\in R^n_x \Leftrightarrow F(x,y)\in R^n_y.
\end{equation}

By s-m-n theorem, there exist recursive functions $t_1(y)$ and $t_2(y)$ such that for any $x$, we have $R^n_{t_1(x)}=R^n_{x}\cup A$ and $R^n_{t_2(x)}=R^n_{x}\cup B$.
Define $G(x,y)=(g_1(x,y), \cdots, g_n(x,y))$ where $g_i(x,y)=f_i(t_2(x),t_1(y))$ for $1\leq i\leq n$. Note that $G(x,y)$ is a recursive $n$-ary functional  on $\mathbb{N}^2$.
We show that $(A,B)$ is $\sf DG$ under $G(x,y)$.

Note that $G(x,y)=F(t_2(x), t_1(y))$.  For any $x, y\in\omega$, since $B\subseteq R^n_{t_2(x)}$ and $A\subseteq R^n_{t_1(y)}$, by (\ref{key eq}), we have:
\begin{equation}\label{eq1}
G(x,y)\in R^n_{t_2(x)}\Leftrightarrow G(x,y)\in R^n_{t_1(y)}.
\end{equation}

Assume $R^n_x\cap R^n_y=\emptyset$.
We show that \[G(x,y)\in A\Leftrightarrow G(x,y)\in R^n_{x}.\]
Suppose that $G(x,y)\in A$. Then $G(x,y)\in R^n_{t_1(y)}$. By (\ref{eq1}), $G(x,y)\in R^n_{t_2(x)}$. Then $G(x,y)\in R^n_{x}\cup B$. Thus, $G(x,y)\in R^n_{x}$.

Suppose $G(x,y)\in R^n_{x}$. Then $G(x,y)\in R^n_{t_2(x)}$. By (\ref{eq1}), $G(x,y)\in R^n_{t_1(y)}$. Then $G(x,y)\in  R^n_{y}\cup A$. Thus, $G(x,y)\in A$.

By a similar argument, we can show that $G(x,y)\in B\Leftrightarrow G(x,y)\in R^n_{y}$. So $(A,B)$ is $\sf DG$ under $G(x,y)$.
\end{proof}

Now we introduce the notion of $\sf DG$ relative to a collection of disjoint pairs  of $n$-ary RE relations. 

\begin{definition}\label{def of DG}
Let $\mathcal{C}$ be a collection of disjoint pairs  of $n$-ary RE relations, and  $(A,B)$  be  a disjoint pair  of $n$-ary RE relations.
\begin{enumerate}[(1)]
  \item We say $(A,B)$ is \emph{$\sf DG$ relative to $\mathcal{C}$} if there is a recursive $n$-ary functional $F(x,y)=(f_1(x,y), \cdots, f_n(x,y))$ on $\mathbb{N}^2$ such that for any $i,j\in\omega$, if $(R^n_i,R^n_j)\in \mathcal{C}$,  then $F(i,j)\in R^n_i\Leftrightarrow F(i,j)\in A$ and $F(i,j)\in R^n_j\Leftrightarrow F(i,j)\in B$.
  \item Let  $F(x,y)$ be a recursive $n$-ary functional on $\mathbb{N}^2$. We define conditions $C_1$-$C_3$ as follows:
      \begin{description}
        \item[$C_1$] for any $i,j\in\omega$, if $R^n_i=\mathbb{N}^n$ and $R^n_j=\emptyset$, then $F(i,j)\in A$;
        \item[$C_2$] for any $i,j\in\omega$, if $R^n_i=\emptyset$ and $R^n_j=\mathbb{N}^n$, then $F(i,j)\in B$;
        \item[$C_3$] for any $i,j\in\omega$, if $R^n_i=R^n_j=\emptyset$, then $F(i,j)\notin A\cup B$.
      \end{description}
      \item Define $\mathcal{D}=\{(\mathbb{N}^n, \emptyset), (\emptyset, \mathbb{N}^n),(\emptyset, \emptyset)\}$.
\end{enumerate}
\end{definition}

It is easy to check that $(A,B)$ is $\sf DG$ relative to $\mathcal{D}$  under $F(x,y)$ iff $C_1$-$C_3$ hold.

Now we show that $\sf DG$ relative to $\mathcal{D}$ implies $\sf DU$.

\begin{theorem}\label{}
Let $(A,B)$  be  a disjoint pair  of $n$-ary RE relations. If $(A,B)$ is $\sf DG$ relative to $\mathcal{D}$, then $(A,B)$ is $\sf DU$.
\end{theorem}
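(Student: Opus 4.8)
The plan is to exploit the remark, recorded just after Definition \ref{def of DG}, that $(A,B)$ being $\sf DG$ relative to $\mathcal{D}$ is equivalent to the three conditions $C_1$--$C_3$, and to push an arbitrary disjoint pair through the witnessing functional. Let $F(x,y)=(f_1(x,y),\cdots,f_n(x,y))$ be the recursive $n$-ary functional on $\mathbb{N}^2$ for which $C_1$--$C_3$ hold. Fix an arbitrary disjoint pair $(C,D)$ of $n$-ary RE relations; the goal is to produce a reduction from $(C,D)$ to $(A,B)$, after which $\sf DU$ follows since $(C,D)$ is arbitrary.

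The first step is to use the s-m-n theorem to manufacture, recursively in $\overrightarrow{a}$, two indices whose associated relations are either all of $\mathbb{N}^n$ or empty, according to the membership of $\overrightarrow{a}$. Concretely, since $C$ and $D$ are RE, the $2n$-ary relations $U(\overrightarrow{x},\overrightarrow{a})\triangleq \overrightarrow{a}\in C$ and $V(\overrightarrow{x},\overrightarrow{a})\triangleq \overrightarrow{a}\in D$ (each ignoring the variables $\overrightarrow{x}$) are RE, so the s-m-n theorem yields recursive functions $i(\overrightarrow{a})$ and $j(\overrightarrow{a})$ with $R^n_{i(\overrightarrow{a})}=\{\overrightarrow{x}:\overrightarrow{a}\in C\}$ and $R^n_{j(\overrightarrow{a})}=\{\overrightarrow{x}:\overrightarrow{a}\in D\}$. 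Hence $R^n_{i(\overrightarrow{a})}=\mathbb{N}^n$ if $\overrightarrow{a}\in C$ and $R^n_{i(\overrightarrow{a})}=\emptyset$ otherwise, and symmetrically for $j(\overrightarrow{a})$ and $D$.

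Next I would define the candidate reduction $H(\overrightarrow{a})\triangleq F(i(\overrightarrow{a}),j(\overrightarrow{a}))$, which is a recursive $n$-ary functional on $\mathbb{N}^n$ since $F$, $i$ and $j$ are recursive. The verification splits into the three disjoint cases, where disjointness of $(C,D)$ is used to control the index that is \emph{not} active: if $\overrightarrow{a}\in C$, then $R^n_{i(\overrightarrow{a})}=\mathbb{N}^n$ and, by disjointness, $R^n_{j(\overrightarrow{a})}=\emptyset$, so $C_1$ gives $H(\overrightarrow{a})\in A$; if $\overrightarrow{a}\in D$, then symmetrically $R^n_{i(\overrightarrow{a})}=\emptyset$ and $R^n_{j(\overrightarrow{a})}=\mathbb{N}^n$, so $C_2$ gives $H(\overrightarrow{a})\in B$; and if $\overrightarrow{a}\notin C\cup D$, then both relations are empty, so $C_3$ gives $H(\overrightarrow{a})\notin A\cup B$. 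By the equivalent three-implication characterisation of a reduction recorded just after Definition \ref{notions about DU}, these three implications show precisely that $H$ is a reduction from $(C,D)$ to $(A,B)$, whence $(A,B)$ is $\sf DU$.

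As for difficulty, there is no genuine obstacle: the only points requiring care are arranging the s-m-n construction so that the two indices take the \emph{exact} values $\mathbb{N}^n$ and $\emptyset$ demanded by $C_1$--$C_3$ (rather than mere inclusions), and invoking the disjointness of $(C,D)$ to ensure that exactly one of the two indices is ``full'' in each of the first two cases. The passage from the three implications to the biconditionals defining a reduction is then free, being handled by the equivalent formulation already noted in the paper.
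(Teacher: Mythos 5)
Your proposal is correct and follows essentially the same route as the paper: the s-m-n construction producing indices whose relations are exactly $\mathbb{N}^n$ or $\emptyset$ according to membership in $C$ and $D$ is precisely the paper's claim, and the composite $F(i(\overrightarrow{a}),j(\overrightarrow{a}))$ with the three-case verification against $C_1$--$C_3$ matches the paper's argument. Your explicit appeal to disjointness to kill the inactive index is a point the paper leaves implicit, but the content is identical.
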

\begin{proof}\label{}
Suppose $(A,B)$ is $\sf DG$ relative to $\mathcal{D}$ under a recursive $n$-ary functional $F(x,y)=(f_1(x,y), \cdots, f_n(x,y))$ on $\mathbb{N}^2$. Then  $C_1$-$C_3$ hold. Let $(C,D)$  be  any disjoint pair  of $n$-ary RE relations. We show that $(C,D)$ is reducible to $(A,B)$.

\begin{claim}
For any $n$-ary RE relation $A$, there is a $n$-ary recursive function $t(\overrightarrow{y})$ such that for any $\overrightarrow{y}\in \mathbb{N}^n$,
\begin{enumerate}[(1)]
  \item  if $\overrightarrow{y}\in A$, then $R^n_{t(\overrightarrow{y})}=\mathbb{N}^n$;
  \item if $\overrightarrow{y}\notin A$, then $R^n_{t(\overrightarrow{y})}=\emptyset$.
\end{enumerate}
\end{claim}
\begin{proof}\label{}
Define $2n$-ary RE relation $M(\overrightarrow{x}, \overrightarrow{y})\triangleq \overrightarrow{y}\in A$. By s-m-n theorem, there exists a $n$-ary recursive function $t(\overrightarrow{y})$ such that $\overrightarrow{x}\in R^n_{t(\overrightarrow{y})} \Leftrightarrow M(\overrightarrow{x}, \overrightarrow{y})\Leftrightarrow \overrightarrow{y}\in A$.
Thus, if $\overrightarrow{y}\in A$, then $R^n_{t(\overrightarrow{y})}=\mathbb{N}^n$, and if $\overrightarrow{y}\notin A$, then $R^n_{t(\overrightarrow{y})}=\emptyset$.
\end{proof}
By the above claim, there are $n$-ary recursive functions $t_1(\overrightarrow{x})$  and $t_2(\overrightarrow{x})$ such that for any $\overrightarrow{x}\in \mathbb{N}^n$,
\begin{enumerate}[(1)]
  \item $\overrightarrow{x}\in C\Rightarrow R^n_{t_1(\overrightarrow{x})}=\mathbb{N}^n$;
  \item $\overrightarrow{x}\notin C\Rightarrow R^n_{t_1(\overrightarrow{x})}=\emptyset$;
  \item $\overrightarrow{x}\in D\Rightarrow R^n_{t_2(\overrightarrow{x})}=\mathbb{N}^n$;
  \item $\overrightarrow{x}\notin D\Rightarrow R^n_{t_2(\overrightarrow{x})}=\emptyset$.
\end{enumerate}

Define $G(\overrightarrow{x})=(g_1(\overrightarrow{x}), \cdots, g_n(\overrightarrow{x}))$ where $g_i(\overrightarrow{x})=f_i(t_1(\overrightarrow{x}), t_2(\overrightarrow{x}))$. Note that $G(\overrightarrow{x})= F(t_1(\overrightarrow{x}), t_2(\overrightarrow{x}))$ and $G(\overrightarrow{x})$ is a recursive $n$-ary functional on $\mathbb{N}^n$.

\begin{claim}
$G(\overrightarrow{x})$ is a reduction from $(C,D)$  to $(A,B)$.
\end{claim}
\begin{proof}\label{}
Suppose $\overrightarrow{x}\in C$. Then $R^n_{t_1(\overrightarrow{x})}=\mathbb{N}^n$ and $R^n_{t_2(\overrightarrow{x})}=\emptyset$. By the condition $C_1$ in Definition \ref{def of DG}, $F(t_1(\overrightarrow{x}), t_2(\overrightarrow{x}))\in A$. Thus, $G(\overrightarrow{x}) \in A$.

Suppose $\overrightarrow{x}\in D$. Then $R^n_{t_1(\overrightarrow{x})}=\emptyset$ and $R^n_{t_2(\overrightarrow{x})}=\mathbb{N}^n$. By the condition $C_2$ in Definition \ref{def of DG},  $F(t_1(\overrightarrow{x}), t_2(\overrightarrow{x}))\in B$. Thus, $G(\overrightarrow{x}) \in B$.

Suppose $\overrightarrow{x}\notin C\cup D$. Then $R^n_{t_1(\overrightarrow{x})}=R^n_{t_2(\overrightarrow{x})}=\emptyset$. By the condition  $C_3$ in Definition \ref{def of DG},  $G(\overrightarrow{x})= F(t_1(\overrightarrow{x}), t_2(\overrightarrow{x}))\notin A\cup B$. Thus, $(A,B)$ is $\sf DU$.
\end{proof}
\end{proof}

Since we have proven that semi-$\sf DU\Rightarrow \sf KP\Rightarrow \sf CEI\Rightarrow \sf DG\Rightarrow \sf DG$ relative to $\mathcal{D}\Rightarrow \sf DU$, thus semi-$\sf DU$ implies $\sf DU$.

\begin{corollary}\label{equ of meta property}
The following notions are equivalent:
\begin{enumerate}[(1)]
  \item Semi-$\sf DU$;
  \item $\sf KP$;
  \item $\sf CEI$;
  \item $\sf EI$;
  \item $\sf WEI$;
  \item $\sf DG$;
  \item $\sf DU$.
\end{enumerate}
\end{corollary}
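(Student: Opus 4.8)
The plan is to establish the equivalence of all seven notions by assembling the implications already proved in Section \ref{semi-DU} and in this appendix into a single connected cycle. It suffices to exhibit a cycle of implications passing through $(1), (2), (3), (6), (7)$, together with a short side path that folds in $(4)$ and $(5)$.

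First I would record the main cycle. The implications semi-$\sf DU\Rightarrow \sf KP$ (Theorem \ref{semi DU to KP}), $\sf KP\Rightarrow \sf CEI$ (Proposition \ref{KP to CEI}), $\sf CEI\Rightarrow \sf DG$ (the proposition above), and $\sf DG\Rightarrow \sf DU$ have all been established; the last holds via $\sf DG$ relative to $\mathcal{D}$, since $\sf DG$ trivially implies $\sf DG$ relative to $\mathcal{D}$ (every pair in $\mathcal{D}$ is disjoint, so the same witnessing functional works) and $\sf DG$ relative to $\mathcal{D}$ implies $\sf DU$ by the theorem above. To close the cycle I would add the one remaining step, $\sf DU\Rightarrow$ semi-$\sf DU$, which is immediate from the definitions: a reduction from $(C,D)$ to $(A,B)$ is in particular a semi-reduction, so every $\sf DU$ pair is semi-$\sf DU$. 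This produces
\[\text{semi-}{\sf DU}\Rightarrow {\sf KP}\Rightarrow {\sf CEI}\Rightarrow {\sf DG}\Rightarrow {\sf DU}\Rightarrow \text{semi-}{\sf DU},\]
giving the mutual equivalence of $(1), (2), (3), (6), (7)$.

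Next I would fold in $\sf EI$ and $\sf WEI$ via the chain $\sf CEI\Rightarrow \sf EI\Rightarrow \sf WEI\Rightarrow \sf DU$. The first two implications are routine and use the same witnessing functional: for $\sf CEI\Rightarrow \sf EI$, if $A\subseteq R^n_i$, $B\subseteq R^n_j$ and $R^n_i\cap R^n_j=\emptyset$, then the $\sf CEI$ equivalence $F(i,j)\in R^n_i\Leftrightarrow F(i,j)\in R^n_j$ together with disjointness forces $F(i,j)\notin R^n_i\cup R^n_j$; for $\sf EI\Rightarrow \sf WEI$ one verifies the three clauses of $\sf WEI$ against the single clause of $\sf EI$ (for clause (ii), if $F(i,j)\notin A$ then $R^n_i$ and $R^n_j$ would be disjoint, contradicting $\sf EI$, so $F(i,j)\in A$; clause (iii) is symmetric and clause (i) is immediate). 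The last implication, $\sf WEI\Rightarrow \sf DU$, is Theorem \ref{WEI is DU}. Since $\sf CEI$ and $\sf DU$ are already equivalent by the main cycle, the notions $\sf EI$ and $\sf WEI$ are sandwiched between two equivalent notions and hence are equivalent to them all.

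There is no substantive obstacle here: every hard implication has already been proved, and the only genuinely new steps ($\sf DU\Rightarrow$ semi-$\sf DU$ and the routine chain $\sf CEI\Rightarrow \sf EI\Rightarrow \sf WEI$) follow directly from the definitions. The sole point requiring care is the bookkeeping, namely checking that the seven notions are linked into one connected cycle of implications so that each is provably equivalent to every other.
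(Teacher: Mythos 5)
Your proposal is correct and follows essentially the same route as the paper: the cycle semi-$\sf DU\Rightarrow \sf KP\Rightarrow \sf CEI\Rightarrow \sf DG\Rightarrow \sf DU\Rightarrow$ semi-$\sf DU$, with $\sf EI$ and $\sf WEI$ sandwiched via $\sf CEI\Rightarrow \sf EI\Rightarrow \sf WEI\Rightarrow \sf DU$ (Theorem \ref{WEI is DU}). The only difference is that you spell out the routine verifications ($\sf DG$ implies $\sf DG$ relative to $\mathcal{D}$, and the $\sf EI\Rightarrow\sf WEI$ clauses) in slightly more detail than the paper does, which is fine.
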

\begin{proof}\label{}
For a disjoint pair of $n$-ary RE relations, we have proved that Semi-$\sf DU\Rightarrow \sf KP\Rightarrow \sf CEI\Rightarrow \sf DG\Rightarrow \sf DU$ and $\sf WEI\Rightarrow \sf DU$.
 Clearly, $\sf CEI\Rightarrow \sf EI\Rightarrow \sf WEI$ and $\sf DU\Rightarrow$ Semi-$\sf DU$.
Thus, the above notions are equivalent.
\end{proof}

This proof of Theorem \ref{semi-DU is DU} does not use any  version of recursion theorem. One merit of this proof is that it establishes that meta-mathematical properties in Corollary \ref{equ of meta property} are equivalent.

\section{The third proof of Theorem \ref{semi-DU is DU}}\label{3rd proof}

In this Appendix, we give a third proof of Theorem \ref{semi-DU is DU}. This proof  is simper than the second proof and does not use any version of recursion theorem. In this proof, we generalize the notion of separation functions  introduced in \cite{Smullyan93} to $n$-ary functionals on $\mathbb{N}^{n+2}$. Our proof is done in two steps: for disjoint pairs of $n$-ary RE relations, we first show that  semi-$\sf DU$ implies having a separation functional,  then we show that having a  separation functional implies $\sf DU$.
We first introduce the notion of separation functional. 

\begin{definition}\label{}
Let $(A,B)$  be  a disjoint pair  of $n$-ary RE relations. We say a $n$-ary functional $S(x, \overrightarrow{y}, z): \mathbb{N}^{n+2}\rightarrow \mathbb{N}^{n}$ on $\mathbb{N}^{n+2}$ is a \emph{separation functional} for $(A,B)$ if $S$ is recursive and for any $(n+1)$-ary RE relations $M_1(\overrightarrow{x},y)$ and $M_2(\overrightarrow{x},y)$, there is $h$ such that for any $y\in\omega$ and $\overrightarrow{x}\in \mathbb{N}^{n}$, we have:
\begin{enumerate}[(1)]
  \item $M_1(\overrightarrow{x},y)\wedge\neg M_2(\overrightarrow{x},y)\Rightarrow S(h, \overrightarrow{x}, y)\in A$;
  \item $M_2(\overrightarrow{x},y)\wedge\neg M_1(\overrightarrow{x},y)\Rightarrow S(h, \overrightarrow{x}, y)\in B$.
\end{enumerate}
\end{definition}

\begin{proposition}\label{there exist DU}
There is a pair $(U_1, U_2)$ of $n$-ary RE relations which is $\sf DU$.
\end{proposition}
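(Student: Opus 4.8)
The plan is to build one disjoint pair $(U_1,U_2)$ into which \emph{every} disjoint pair $(C,D)$ of $n$-ary RE relations reduces, by encoding a choice of indices for $(C,D)$ into the single coordinate of a diagonal tuple. The key tool is the comparison relation $B(\overrightarrow{x},y,z)$ of Theorem \ref{comparison lemma}: its property (3) turns the disjointness of $R^n_i$ and $R^n_j$ into the exact identities $R^n_i=\{\overrightarrow{x}:B(\overrightarrow{x},i,j)\}$ and $R^n_j=\{\overrightarrow{x}:B(\overrightarrow{x},j,i)\}$, while property (1) guarantees that $\{\overrightarrow{x}:B(\overrightarrow{x},i,j)\}$ and $\{\overrightarrow{x}:B(\overrightarrow{x},j,i)\}$ are always disjoint.

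Concretely, I would code a triple $(i,j,\overrightarrow{x})$ by the single number $c(i,j,\overrightarrow{x})=J_2(J_2(i,j),J_n(\overrightarrow{x}))$, which is injective in $(i,j,\overrightarrow{x})$ since $J_2$ and $J_n$ are injective. I would then set
\[U_1=\{\overbrace{c(i,j,\overrightarrow{x})} : i,j\in\omega,\ \overrightarrow{x}\in\mathbb{N}^n,\ B(\overrightarrow{x},i,j)\},\]
\[U_2=\{\overbrace{c(i,j,\overrightarrow{x})} : i,j\in\omega,\ \overrightarrow{x}\in\mathbb{N}^n,\ B(\overrightarrow{x},j,i)\}.\]
Because $B$ is RE and $c$ is recursive, both $U_1$ and $U_2$ are $n$-ary RE relations. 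They are disjoint: if $\overbrace{m}\in U_1\cap U_2$, then $m=c(i,j,\overrightarrow{x})$ for a unique triple, so both $B(\overrightarrow{x},i,j)$ and $B(\overrightarrow{x},j,i)$ would hold, contradicting property (1) of Theorem \ref{comparison lemma}.

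To check that $(U_1,U_2)$ is $\sf DU$, I would take an arbitrary disjoint pair $(C,D)$, fix indices $i,j$ with $C=R^n_i$ and $D=R^n_j$, and define the recursive $n$-ary functional $F(\overrightarrow{x})=\overbrace{c(i,j,\overrightarrow{x})}$ on $\mathbb{N}^n$. Since $c$ is injective, $\overbrace{c(i,j,\overrightarrow{x})}\in U_1\Leftrightarrow B(\overrightarrow{x},i,j)$ and $\overbrace{c(i,j,\overrightarrow{x})}\in U_2\Leftrightarrow B(\overrightarrow{x},j,i)$; and as $C,D$ are disjoint, property (3) of Theorem \ref{comparison lemma} gives $\overrightarrow{x}\in C\Leftrightarrow B(\overrightarrow{x},i,j)$ and $\overrightarrow{x}\in D\Leftrightarrow B(\overrightarrow{x},j,i)$. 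Combining these yields $\overrightarrow{x}\in C\Leftrightarrow F(\overrightarrow{x})\in U_1$ and $\overrightarrow{x}\in D\Leftrightarrow F(\overrightarrow{x})\in U_2$, so $F$ reduces $(C,D)$ to $(U_1,U_2)$, and the two biconditionals force $F(\overrightarrow{x})\notin U_1\cup U_2$ whenever $\overrightarrow{x}\notin C\cup D$.

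The conceptual crux is the use of Theorem \ref{comparison lemma}(3), which is exactly what upgrades a semi-reduction to an \emph{exact} reduction of $(C,D)$ into $(U_1,U_2)$; the remaining work---disjointness of $(U_1,U_2)$ and the fact that decoding $c(i,j,\overrightarrow{x})$ recovers $(i,j,\overrightarrow{x})$---is routine bookkeeping resting on injectivity of the pairing functions, and it uses no version of the recursion theorem. A shorter but less self-contained alternative would invoke Proposition \ref{eg of KP} together with the recursion-theorem-free chain $\sf KP\Rightarrow\sf CEI\Rightarrow\sf DG\Rightarrow\sf DU$ from Appendix \ref{1st proof}, which already exhibits the $\sf KP$ pair $(K_1,K_2)$ as $\sf DU$.
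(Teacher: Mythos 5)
Your construction is correct and is essentially the paper's own proof: both build $U_1,U_2$ from diagonal tuples that encode a pair of indices together with the argument, decide membership by the comparison relation $B$ of Theorem \ref{comparison lemma}, and obtain the reduction $F(\overrightarrow{x})=\overbrace{c(i,j,\overrightarrow{x})}$ whose exactness comes from property (3) of that theorem (the paper uses the coding $J_{n+1}(J_2(x,y),\overrightarrow{z})$ in place of your $c$, a purely cosmetic difference). Your explicit verification that $(U_1,U_2)$ is disjoint, via injectivity of the coding and property (1), is a detail the paper leaves implicit.
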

\begin{proof}\label{}
Recall that for any $x\in \mathbb{N}$, $\overbrace{x}$ denotes $(x, \cdots, x)\in \mathbb{N}^{n}$. Define
\[U_1=\{\overbrace{J_{n+1}(J_2(x,y),\overrightarrow{z})}: \overrightarrow{z}\in R^n_y \, \text{before}\, \overrightarrow{z}\in R^n_x\}\]
and \[U_2=\{\overbrace{J_{n+1}(J_2(x,y),\overrightarrow{z})}: \overrightarrow{z}\in R^n_x \, \text{before}\, \overrightarrow{z}\in R^n_y\}.\]

Suppose $R^n_i\cap R^n_j=\emptyset$. Note that
\begin{align*}
 \overrightarrow{x}\in R^n_i & \Leftrightarrow B(\overrightarrow{x},i,j) \\
      &\Leftrightarrow \overrightarrow{x}\in R^n_i \, \text{before}\, \overrightarrow{x}\in R^n_j  \\
      &\Leftrightarrow \overbrace{J_{n+1}(J_2(j,i),\overrightarrow{x})}\in U_1;\\
      \overrightarrow{x}\in R^n_j & \Leftrightarrow B(\overrightarrow{x},j,i)\\
                             &\Leftrightarrow \overrightarrow{x}\in R^n_j \, \text{before}\,\overrightarrow{x}\in R^n_i\\
                             &\Leftrightarrow \overbrace{J_{n+1}(J_2(j,i),\overrightarrow{x})}\in U_2.
\end{align*}

Define $F(\overrightarrow{x})=\overbrace{J_{n+1}(J_2(j,i),\overrightarrow{x})}$. Then $F(\overrightarrow{x})$ is a reduction of $(R^n_i, R^n_j)$ to $(U_1, U_2)$.
\end{proof}

\begin{lemma}\label{lemma of SF}
For any $n$-ary RE relations $A$ and $B$, there is $h$ such that $F(\overrightarrow{x})\triangleq\overbrace{J_{n+1}(h,\overrightarrow{x})}$ is a semi-reduction from $(A-B, B-A)$ to $(U_1, U_2)$.
\end{lemma}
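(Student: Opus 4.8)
The plan is to choose $h$ so that it codes indices for $A$ and $B$ in the correct order, and then to read off both semi-reduction conditions directly from the definition of $(U_1,U_2)$ together with the comparison relation. First I would fix indices $i_0,j_0$ with $A=R^n_{i_0}$ and $B=R^n_{j_0}$, and set $h=J_2(j_0,i_0)$, so that $K(h)=j_0$ and $L(h)=i_0$. Then $F(\overrightarrow{x})=\overbrace{J_{n+1}(h,\overrightarrow{x})}$ is manifestly recursive and maps $\mathbb{N}^n$ to $\mathbb{N}^n$, so it remains only to verify the two implications in the definition of a semi-reduction (Definition \ref{notions about DU}(1)). Note that this definition imposes no requirement that the source pair $(A-B,B-A)$ be RE or disjoint, which is convenient since these differences need not be RE; we only need the two membership implications to hold pointwise.

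To check the first implication, I would suppose $\overrightarrow{a}\in A-B=R^n_{i_0}-R^n_{j_0}$. By Theorem \ref{comparison lemma}(2), $\overrightarrow{a}\in R^n_{i_0}$ before $\overrightarrow{a}\in R^n_{j_0}$. Matching $x=j_0$, $y=i_0$ and $\overrightarrow{z}=\overrightarrow{a}$ in the defining condition of $U_1$ from Proposition \ref{there exist DU}, this says precisely that $\overbrace{J_{n+1}(J_2(j_0,i_0),\overrightarrow{a})}\in U_1$; since $J_2(j_0,i_0)=h$, this is exactly $F(\overrightarrow{a})\in U_1$. The second implication is symmetric: if $\overrightarrow{a}\in B-A=R^n_{j_0}-R^n_{i_0}$, then by Theorem \ref{comparison lemma}(2), $\overrightarrow{a}\in R^n_{j_0}$ before $\overrightarrow{a}\in R^n_{i_0}$, which by the defining condition of $U_2$ gives $F(\overrightarrow{a})=\overbrace{J_{n+1}(J_2(j_0,i_0),\overrightarrow{a})}\in U_2$. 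Observe that $F(\overrightarrow{a})$ is literally of the form appearing in the definitions of $U_1$ and $U_2$, so no appeal to the injectivity of $J_{n+1}$ or $J_2$ is even needed to conclude membership.

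The only genuine point requiring care is the bookkeeping of the argument order in the pairing function: because the defining condition of $U_1$ reads ``$\overrightarrow{z}\in R^n_y$ before $\overrightarrow{z}\in R^n_x$'' for the code $J_2(x,y)$, the indices appear swapped, so one must pair $j_0$ before $i_0$ (that is, take $h=J_2(j_0,i_0)$ rather than $J_2(i_0,j_0)$) to align $A$ with $U_1$ and $B$ with $U_2$. I do not expect any substantive obstacle beyond this. In particular, since $A$ and $B$ need not be disjoint, I would appeal to the containments in Theorem \ref{comparison lemma}(2) rather than to the biconditionals of Proposition \ref{there exist DU} (which assume $R^n_i\cap R^n_j=\emptyset$), and no form of the recursion theorem is used, consistent with the stated aim of this appendix.
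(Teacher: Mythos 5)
Your proof is correct and follows essentially the same route as the paper: fix indices $i,j$ with $A=R^n_i$, $B=R^n_j$, take $h=J_2(j,i)$, and read off both membership implications from Theorem \ref{comparison lemma}(2) and the definitions of $U_1,U_2$, with the same care about the swapped argument order in the pairing. Your side remarks (that only the two pointwise implications are needed, and that the biconditionals of Proposition \ref{there exist DU} are not applicable since $A$ and $B$ need not be disjoint) are accurate and consistent with how the paper uses the lemma.
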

\begin{proof}\label{}
Suppose $A=R^n_i$ and $B=R^n_j$. Let $h=J_2(j,i)$. From the proof of Proposition \ref{there exist DU}, we have
\[\overrightarrow{x}\in A-B\Rightarrow B(\overrightarrow{x},i,j)\Rightarrow \overbrace{J_{n+1}(J_2(j,i),\overrightarrow{x})}\in U_1;\]
and
\[\overrightarrow{x}\in B-A\Rightarrow B(\overrightarrow{x},j,i)\Rightarrow \overbrace{J_{n+1}(J_2(j,i),\overrightarrow{x})}\in U_2.\]
Thus, $F(\overrightarrow{x})\triangleq\overbrace{J_{n+1}(h,\overrightarrow{x})}$ is a semi-reduction from $(A-B, B-A)$ to $(U_1, U_2)$.
\end{proof}

Now we show that semi-$\sf DU$ implies having a separation functional. 

\begin{theorem}\label{semi-DU is SF}
Let $(A,B)$  be  a disjoint pair  of $n$-ary RE relations. If $(A,B)$ is semi-$\sf DU$, then $(A,B)$ has a separation functional.
\end{theorem}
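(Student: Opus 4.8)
The plan is to route an arbitrary pair $(M_1,M_2)$ through the fixed universal pair $(U_1,U_2)$ of Proposition \ref{there exist DU} and then across the semi-$\sf DU$ hypothesis. First I would observe that $(U_1,U_2)$ is itself a \emph{disjoint} pair of $n$-ary RE relations: disjointness is immediate from Theorem \ref{comparison lemma}(1) together with the injectivity of $J_2$, $J_{n+1}$ and the map $x\mapsto\overbrace{x}$. Hence the semi-$\sf DU$ assumption on $(A,B)$, applied to the disjoint pair $(C,D)=(U_1,U_2)$, yields a recursive $n$-ary functional $G=(g_1,\dots,g_n)$ on $\mathbb{N}^n$ which is a semi-reduction from $(U_1,U_2)$ to $(A,B)$, i.e. $\overrightarrow{w}\in U_1\Rightarrow G(\overrightarrow{w})\in A$ and $\overrightarrow{w}\in U_2\Rightarrow G(\overrightarrow{w})\in B$. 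The crucial feature is that $G$ depends only on $(A,B)$; moreover the one-directional implications that a semi-reduction provides are exactly what the definition of a separation functional demands, which is why semi-$\sf DU$ rather than full $\sf DU$ suffices here.

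Next I would make $S$ a single fixed functional by pushing the data of $M_1,M_2$ into the index $h$ while handling their free parameter uniformly through the s-m-n theorem. Fix once and for all a recursive $\sigma(e,y)$ with $R^n_{\sigma(e,y)}=\{\overrightarrow{x}\in\mathbb{N}^n:R^{n+1}_e(\overrightarrow{x},y)\}$, so that $\sigma$ recovers an index for the $y$-slice of the relation coded by $e$. I would then take as candidate
\[S(x,\overrightarrow{y},z)\triangleq G\big(\overbrace{J_{n+1}(J_2(\sigma(L(x),z),\sigma(K(x),z)),\overrightarrow{y})}\big),\]
which is recursive because $G$, $\sigma$, $J_2$, $J_{n+1}$, $K$, $L$ and $x\mapsto\overbrace{x}$ all are. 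Given $(n+1)$-ary RE relations $M_1,M_2$ with indices $e_1,e_2$, I set $h=J_2(e_1,e_2)$, so $K(h)=e_1$ and $L(h)=e_2$; writing $p(y)=\sigma(e_1,y)$ and $q(y)=\sigma(e_2,y)$ then gives $R^n_{p(y)}=\{\overrightarrow{x}:M_1(\overrightarrow{x},y)\}$ and $R^n_{q(y)}=\{\overrightarrow{x}:M_2(\overrightarrow{x},y)\}$.

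Finally I would verify the two defining clauses by replaying the computation inside Proposition \ref{there exist DU}. If $M_1(\overrightarrow{x},y)\wedge\neg M_2(\overrightarrow{x},y)$, then $\overrightarrow{x}\in R^n_{p(y)}$ while $\overrightarrow{x}\notin R^n_{q(y)}$, so $\overrightarrow{x}\in R^n_{p(y)}$ before $\overrightarrow{x}\in R^n_{q(y)}$, whence $\overbrace{J_{n+1}(J_2(q(y),p(y)),\overrightarrow{x})}\in U_1$ and therefore $S(h,\overrightarrow{x},y)\in A$ by the semi-reduction property of $G$; the symmetric case $M_2\wedge\neg M_1$ produces membership in $U_2$ and hence in $B$. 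The step requiring the most care — and the one I expect to be the main obstacle — is keeping the argument order of $J_2$ straight inside the definitions of $U_1$ and $U_2$: one must check that the single pairing $J_2(q(y),p(y))$ simultaneously sends the $M_1$-only case into $U_1$ and the $M_2$-only case into $U_2$. This is precisely where the asymmetry of the ``before'' relation $B$ of Theorem \ref{comparison lemma} is used; once it is lined up correctly, the rest is routine closure of recursive functionals under composition.
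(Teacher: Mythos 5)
Your proof is correct and follows essentially the same route as the paper: apply the semi-$\sf DU$ hypothesis to the universal pair $(U_1,U_2)$ of Proposition \ref{there exist DU} to obtain a semi-reduction $G$, define $S$ by composing $G$ with a recursive encoding into the diagonal, and verify the two clauses via the asymmetry of the ``before'' relation of Theorem \ref{comparison lemma}. The only difference is bookkeeping: where the paper packs the pair $(\overrightarrow{x},y)$ into single $n$-ary RE sets $C,D$ and invokes Lemma \ref{lemma of SF}, you keep $y$ as a parameter and use the s-m-n function $\sigma$ to extract slice indices $p(y),q(y)$; both devices achieve the same uniformization, and your argument order $J_2(q(y),p(y))$ correctly mirrors the $h=J_2(j,i)$ convention used in Lemma \ref{lemma of SF}.
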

\begin{proof}\label{}
Let $(U_1, U_2)$ be the $\sf DU$ pair defined in Proposition \ref{there exist DU}. Suppose $(A,B)$ is semi-$\sf DU$. Then there is a recursive $n$-ary functional $G(\overrightarrow{x})$ on $\mathbb{N}^{n}$ such that $G(\overrightarrow{x})$ is a semi-reduction from $(U_1, U_2)$ to $(A,B)$.
Define $S(x, \overrightarrow{y}, z)=G(\overbrace{J_{n+1}(x,\overbrace{J_{n+1}(\overrightarrow{y},z)})})$. Note that $S(x, \overrightarrow{y}, z)$ is a recursive $n$-ary functional on $\mathbb{N}^{n+2}$.

\begin{claim}
$S(x, \overrightarrow{y}, z)$ is a separation functional for $(A,B)$.
\end{claim}
\begin{proof}\label{}
Take any $(n+1)$-ary RE relations $M_1(\overrightarrow{x},y)$ and $M_2(\overrightarrow{x},y)$. Define $C=\{\overbrace{J_{n+1}(\overrightarrow{x},y)}: M_1(\overrightarrow{x},y)\}$ and $D=\{\overbrace{J_{n+1}(\overrightarrow{x},y)}: M_2(\overrightarrow{x},y)\}$.

Apply Lemma \ref{lemma of SF} to $C$ and $D$. Then there is $h$ such that $F(\overrightarrow{x})=\overbrace{J_{n+1}(h,\overrightarrow{x})}$ is a semi-reduction from $(C-D, D-C)$ to $(U_1, U_2)$. Thus, $G(F(\overrightarrow{x}))$ is a semi-reduction from $(C-D, D-C)$ to $(A,B)$. Then for any $y\in\omega$ and  $\overrightarrow{x}\in \mathbb{N}^n$, we have:
\begin{enumerate}[(1)]
  \item $M_1(\overrightarrow{x},y)\wedge\neg M_2(\overrightarrow{x},y)\Rightarrow \overbrace{J_{n+1}(\overrightarrow{x},y)}\in C-D\Rightarrow G(F(\overbrace{J_{n+1}(\overrightarrow{x},y)})) \in A\Rightarrow  G(\overbrace{J_{n+1}(h,\overbrace{J_{n+1}(\overrightarrow{x},y)})})\in A
      \Rightarrow S(h, \overrightarrow{x}, y)\in A$;
  \item $M_2(\overrightarrow{x},y)\wedge\neg M_1(\overrightarrow{x},y)\Rightarrow \overbrace{J_{n+1}(\overrightarrow{x},y)}\in D-C\Rightarrow G(F(\overbrace{J_{n+1}(\overrightarrow{x},y)})) \in B\Rightarrow S(h, \overrightarrow{x}, y)\in B$.
\end{enumerate}
\end{proof}
\end{proof}

Now we show that having a  separation functional implies $\sf DU$. 
\begin{theorem}\label{SF is DU}
Let $(A,B)$  be  a disjoint pair  of $n$-ary RE relations. If $(A,B)$ has a separation functional, then $(A,B)$ is $\sf DU$.
\end{theorem}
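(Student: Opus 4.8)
The plan is to establish double universality by a diagonal construction that feeds the separation functional into its own defining pair of relations. Fix a disjoint pair $(C,D)$ of $n$-ary RE relations; the goal is to produce a recursive $n$-ary functional reducing $(C,D)$ to $(A,B)$. Let $S(x, \overrightarrow{y}, z)$ be the given separation functional for $(A,B)$. The idea is to choose auxiliary $(n+1)$-ary relations $M_1, M_2$ whose definitions already anticipate the output of $S$, so that the index $h$ furnished by the separation property becomes a fixed point that makes the induced reduction exact.

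Concretely, I would define
\[M_1(\overrightarrow{x}, y) \triangleq \overrightarrow{x} \in C \vee S(y, \overrightarrow{x}, y) \in B, \qquad M_2(\overrightarrow{x}, y) \triangleq \overrightarrow{x} \in D \vee S(y, \overrightarrow{x}, y) \in A.\]
Both are RE, since $C, D, A, B$ are RE and $S$ is recursive. Applying the separation-functional property of $(A,B)$ to $M_1, M_2$ yields an index $h$ such that, for all $\overrightarrow{x}$ and $y$, $M_1(\overrightarrow{x}, y) \wedge \neg M_2(\overrightarrow{x}, y) \Rightarrow S(h, \overrightarrow{x}, y) \in A$ and $M_2(\overrightarrow{x}, y) \wedge \neg M_1(\overrightarrow{x}, y) \Rightarrow S(h, \overrightarrow{x}, y) \in B$. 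I then specialize to $y = h$ and set $F(\overrightarrow{x}) \triangleq S(h, \overrightarrow{x}, h)$, which is a recursive $n$-ary functional on $\mathbb{N}^n$. The crucial point is that at $y = h$ the clauses read $M_1(\overrightarrow{x}, h) = (\overrightarrow{x} \in C \vee F(\overrightarrow{x}) \in B)$ and $M_2(\overrightarrow{x}, h) = (\overrightarrow{x} \in D \vee F(\overrightarrow{x}) \in A)$, closing the self-reference.

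The verification splits into the three cases characterizing a reduction. If $\overrightarrow{x} \in C$ (so $\overrightarrow{x} \notin D$ by disjointness), then assuming $F(\overrightarrow{x}) \notin A$ makes $M_2(\overrightarrow{x}, h)$ false while $M_1(\overrightarrow{x}, h)$ holds, forcing $F(\overrightarrow{x}) \in A$ by the separation property, a contradiction; hence $F(\overrightarrow{x}) \in A$. The case $\overrightarrow{x} \in D$ is symmetric and gives $F(\overrightarrow{x}) \in B$. If $\overrightarrow{x} \notin C \cup D$, then $M_1(\overrightarrow{x}, h) = (F(\overrightarrow{x}) \in B)$ and $M_2(\overrightarrow{x}, h) = (F(\overrightarrow{x}) \in A)$; assuming $F(\overrightarrow{x}) \in A$ makes $M_2$ true and $M_1$ false (as $A, B$ are disjoint), forcing $F(\overrightarrow{x}) \in B$ and contradicting disjointness, and symmetrically $F(\overrightarrow{x}) \in B$ is impossible, so $F(\overrightarrow{x}) \notin A \cup B$. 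Thus $F$ reduces $(C,D)$ to $(A,B)$, and $(A,B)$ is $\sf DU$.

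The main obstacle is discovering the correct self-referential definition of $M_1, M_2$, in particular the deliberate crossing whereby $M_1$ (the clause that pushes outputs into $A$) carries the disjunct $S(y, \overrightarrow{x}, y) \in B$ rather than $\in A$. This crossing is exactly what converts a potential ``wrong'' membership of $F(\overrightarrow{x})$ into a contradiction via the separation property, thereby upgrading the merely one-directional implications of a separation functional into the exact, biconditional behavior demanded by $\sf DU$. Once this definition is in place, the remaining work, namely the RE closure checks and the three-case verification, is routine.
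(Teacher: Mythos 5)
Your proposal is correct and follows essentially the same route as the paper's proof: the same self-referential definitions of $M_1$ and $M_2$ (with the crossed disjuncts $S(y,\overrightarrow{x},y)\in B$ and $\in A$), the same specialization to $y=h$, and the same reduction $F(\overrightarrow{x})=S(h,\overrightarrow{x},h)$. Your three-case verification merely spells out in more detail the biconditionals that the paper states directly from its implications (1)--(2).
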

\begin{proof}\label{}
Suppose $S(x, \overrightarrow{y}, z)$ is a $n$-ary separation functional on $\mathbb{N}^{n+2}$ for $(A,B)$. Let $(C,D)$ be a disjoint pair  of $n$-ary RE relations. We show that $(C,D)$ is reducible to $(A,B)$. Define  $M_1(\overrightarrow{x},y)\triangleq  \overrightarrow{x} \in C \vee S(y, \overrightarrow{x}, y)\in B$, and $M_2(\overrightarrow{x},y)\triangleq \overrightarrow{x} \in D \vee S(y, \overrightarrow{x}, y)\in A$. Then there is $h$ such that for any $y\in\omega$  and  $\overrightarrow{x}\in \mathbb{N}^{n}$, we have:  \[M_1(\overrightarrow{x},y)\wedge \neg M_2(\overrightarrow{x},y)\Rightarrow S(h, \overrightarrow{x}, y)\in A\]
and \[M_2(\overrightarrow{x},y)\wedge \neg M_1(\overrightarrow{x},y)\Rightarrow S(h, \overrightarrow{x}, y)\in B.\]

Let $y\triangleq h$. Then
\begin{enumerate}[(1)]
  \item $[\overrightarrow{x} \in C \vee S(h, \overrightarrow{x}, h)\in B]\wedge \neg [\overrightarrow{x} \in D \vee S(h, \overrightarrow{x}, h)\in A]\Rightarrow S(h, \overrightarrow{x}, h)\in A$;
  \item $[\overrightarrow{x} \in D \vee S(h, \overrightarrow{x}, h)\in A]\wedge \neg [\overrightarrow{x} \in C \vee S(h, \overrightarrow{x}, h)\in B]\Rightarrow S(h, \overrightarrow{x}, h)\in B$.
\end{enumerate}
Thus, from (1)-(2), we have:
\begin{align*}
\overrightarrow{x} \in C\Leftrightarrow   S(h, \overrightarrow{x}, h)\in A;\\
\overrightarrow{x} \in D\Leftrightarrow   S(h, \overrightarrow{x}, h)\in B.
\end{align*}
Define $F(\overrightarrow{x})=S(h, \overrightarrow{x}, h)$. Note that $F(\overrightarrow{x})$ is a $n$-ary recursive functional on $\mathbb{N}^n$ and $F(\overrightarrow{x})$ is a reduction function from $(C,D)$ to $(A,B)$.
\end{proof}

As a corollary of Theorem \ref{semi-DU is SF} and Theorem \ref{SF is DU}, we have semi-$\sf DU$ implies $\sf DU$.

\end{document}